\newlength{\fixboxwidth}
\newcommand{\re}{\mathbb{R}}\newcommand{\N}{\mathbb{N}}
\newcommand{\zz}{\mathbb{Z}}\newcommand{\C}{\mathbb{C}}
\newcommand{\Z}{{\zz}^d}
\newcommand{\R}{{\re}^d}
\newcommand{\cs}{{\mathcal S}}
\newcommand{\cl}{{\mathcal L}}
\newcommand{\cf}{{\mathcal F}}
\newcommand{\cfi}{{\cf}^{-1}}
\newcommand{\supp}{{\rm supp \, }}
\newcommand{\dist}{{\rm dist \, }}
\newcommand{\gf}{\mathcal{F}}
\newcommand{\unif}{{\rm unif}}
\newcommand{\bproof}{\begin{proof}}
\newcommand{\eproof}{\end{proof}}
\newcommand{\fspq}{F^s_{p,q}\,}           
\newcommand{\be}{\begin{equation}}
\newcommand{\ee}{\end{equation}}
\newcommand{\beq}{\begin{eqnarray}}
\newcommand{\beqq}{\begin{eqnarray*}}
\newcommand{\eeq}{\end{eqnarray}}
\newcommand{\eeqq}{\end{eqnarray*}}
\numberwithin{equation}{section}
\newtheorem{theorem}{Theorem}[section]
\newtheorem{definition}[theorem]{Definition}
\newtheorem{corollary}[theorem]{Corollary}
\newtheorem{lemma}[theorem]{Lemma}
\newtheorem{proposition}[theorem]{Proposition}
\newtheorem{remark}[theorem]{Remark}
\begin{document}

\title{On a Problem of Jaak Peetre Concerning Pointwise Multipliers of Besov Spaces}

\author[a,b]{Van Kien Nguyen\thanks{E-mail: kien.nguyen@uni-jena.de,\ kiennv@utc.edu.vn}}
\author[a]{Winfried Sickel\thanks{E-mail: winfried.sickel@uni-jena.de}}
\affil[a]{Friedrich-Schiller-University Jena, Ernst-Abbe-Platz 2, 07737 Jena, Germany}
\affil[b]{University of Transport and Communications, Dong Da, Hanoi, Vietnam}


\date{\today}

\maketitle

\begin{abstract}  
We characterize the set of all pointwise multipliers of the
Besov spaces $B^s_{p,q}(\R)$ under the restrictions $0 < p,q \le \infty$ and  $s>d/p$.  
\end{abstract}


\noindent
{\em Key words:} Pointwise multipliers; Besov spaces; characterization by differences; 
localization property of  Besov spaces.


\section{Introduction and main results}


In his famous book {\em New thoughts on Besov spaces}, page 151,  Jaak Peetre posed the problem 
to determine the set of all pointwise multipliers $M (B^s_{p,q}(\R))$ of the Besov space 
$B^s_{p,q}(\R)$ in case $s>d/p$. Now, more than 40 years later, we are able to present the 
complete solution to this problem.
To describe this  we need to introduce a few related classes of functions.
Here we are forced to distinguish between the cases $p \le q$,  $q<p< \infty$ and $p=\infty$.
First we deal with $p \le q$.

\begin{definition} \label{unif}
Let $\psi \in C_0^\infty (\R)$ be a nonnegative nontrivial function.
Let $0< p,q \le \infty$ and $s\in \re$.  Then 
$B^s_{p,q}(\R)_{\unif}$  denotes the collection of all tempered distributions 
$f \in \cs' (\R)$ such that 
\beqq
\| \, f\, | B^s_{p,q}(\R)_{\unif}\|:= \sup_{\lambda \in \R} \| \, f(\, \cdot \, )\, \psi (\, \cdot\, -\lambda) \, | B^s_{p,q}(\R)\|
<\infty \, .
\eeqq
\end{definition}

The spaces $ B^s_{p,q}(\R)_{\unif}$ are quasi-Banach spaces independent of the choice of $\psi$ (in the sense of equivalent quasi-norms).

\begin{theorem}\label{b-main}
 Let $0<p\leq q\leq \infty$, and $s>d/p$. Then 
 \be\label{ws-02}
 M(B^s_{p,q}(\R))=B^s_{p,q}(\R)_{\unif}
 \ee
 in the sense of equivalent quasi-norms.
\end{theorem}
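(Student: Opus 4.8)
The plan is to prove the two inclusions in \eqref{ws-02} separately. The inclusion $M(B^s_{p,q}(\R))\subset B^s_{p,q}(\R)_\unif$ is elementary: if $g\in M(B^s_{p,q}(\R))$, one tests the multiplier on the translates $\psi(\,\cdot\,-\lambda)$ of the bump $\psi$ of Definition~\ref{unif}. Since $s>d/p$ these lie in $B^s_{p,q}(\R)$, with common quasi-norm $c=\|\,\psi\,|\,B^s_{p,q}(\R)\|$ by translation invariance, so $\|\,g(\,\cdot\,)\psi(\,\cdot\,-\lambda)\,|\,B^s_{p,q}(\R)\|\le c\,\|\,g\,|\,M(B^s_{p,q}(\R))\|$ for all $\lambda\in\R$; taking the supremum over $\lambda$ gives $\|\,g\,|\,B^s_{p,q}(\R)_\unif\|\ls\|\,g\,|\,M(B^s_{p,q}(\R))\|$.

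For the converse, $B^s_{p,q}(\R)_\unif\subset M(B^s_{p,q}(\R))$, fix a smooth unit partition $\sum_{k\in\Z}\varphi(\,\cdot\,-k)\equiv1$ ($\varphi\in C_0^\infty(\R)$) and $\wt\varphi\in C_0^\infty(\R)$ nonnegative with $\wt\varphi\equiv1$ on a neighbourhood of $\supp\varphi$; let $g\in B^s_{p,q}(\R)_\unif$, $f\in B^s_{p,q}(\R)$. Four facts, all valid for $s>d/p$, $p\le q$, are used: (i) $B^s_{p,q}(\R)\hookrightarrow L_\infty(\R)$, so $g$ is bounded with $\|\,g\,|\,L_\infty(\R)\|\ls\|\,g\,|\,B^s_{p,q}(\R)_\unif\|$ (evaluate the $\unif$-quasi-norm at a point), $fg$ is a well-defined bounded, hence tempered, distribution, and $\|\,fg\,|\,L_p(\R)\|\le\|\,f\,|\,L_p(\R)\|\,\|\,g\,|\,L_\infty(\R)\|$; (ii) $\|\,\wt\varphi(\,\cdot\,-k)g\,|\,B^s_{p,q}(\R)\|\ls\|\,g\,|\,B^s_{p,q}(\R)_\unif\|$ uniformly in $k$ (translation invariance and independence of the $\unif$-quasi-norm of the bump); (iii) $B^s_{p,q}(\R)\hookrightarrow\mathcal{C}^{s-d/p}(\R)$, whence $\sup_{|h|\le t}\|\,\Delta_h^m u\,|\,L_\infty(\R)\|\ls t^{\min(m,\,s-d/p)}\|\,u\,|\,B^s_{p,q}(\R)\|$, applied to $g$ (localized and reassembled, with the $\unif$-norm on the right); (iv) the refined embedding $\big(\sum_{k\in\Z}\|\,f\,|\,L_\infty(Q_k)\|^p\big)^{1/p}\ls\|\,f\,|\,B^s_{p,q}(\R)\|$ over unit cubes $Q_k$, from Nikol'skij's inequality $\|\,\Delta_jf\,|\,L_\infty(Q_k)\|\ls 2^{jd/p}\|\,\Delta_jf\,|\,L_p(Q_k^*)\|$ summed over dyadic blocks $j$ after forming the $\ell_p$-sum in $k$ and then applying H\"older in $j$ --- the order of these last two steps being where $p\le q$ enters. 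Then I describe $B^s_{p,q}(\R)$ by moduli of smoothness, $\|\,f\,|\,B^s_{p,q}(\R)\|\approx\|\,f\,|\,L_p(\R)\|+\big(\int_0^1 t^{-sq}\omega_M(f,t)_p^q\,\frac{dt}{t}\big)^{1/q}$ with $\omega_M(f,t)_p=\sup_{|h|\le t}\|\,\Delta_h^M f\,|\,L_p(\R)\|$, \emph{choosing the integer order $M$ larger than $s+d/p$}, and expand $\Delta_h^M(fg)$ by the Leibniz rule $\Delta_h^M(fg)(x)=\sum_{l=0}^M\binom{M}{l}(\Delta_h^l f)(x)(\Delta_h^{M-l}g)(x+lh)$.

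Now one estimates $\omega_M(fg,t)_p$ term by term. The $l=M$ term obeys $\|(\Delta_h^M f)\,g(\,\cdot\,+Mh)\,|\,L_p(\R)\|\le\omega_M(f,t)_p\,\|\,g\,|\,L_\infty(\R)\|$ and integrates directly against (i). If $l>d/p$ or $M-l\le s-d/p$, H\"older ($L_p\cdot L_\infty$) and (iii) give $\|(\Delta_h^l f)(\Delta_h^{M-l}g)(\,\cdot\,+lh)\,|\,L_p(\R)\|\ls\omega_l(f,t)_p\,t^{\min(M-l,\,s-d/p)}\|\,g\,|\,B^s_{p,q}(\R)_\unif\|$, and since the residual weight exponent $s-\min(M-l,s-d/p)$ is $<l$ (here $M>s$ suffices), integration yields $\ls\|\,g\,|\,B^s_{p,q}(\R)_\unif\|\,\|\,f\,|\,B^s_{p,q}(\R)\|$. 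For the remaining $l$ (in particular $l=0$; these have $l\le d/p$, hence $M-l>s$), localize $f=\sum_k\varphi(\,\cdot\,-k)f$: since $\Delta_h^l f=\sum_k\Delta_h^l(\varphi(\,\cdot\,-k)f)$ with bounded overlap, one bounds the $k$-th summand by $\|\,f\,|\,L_\infty(Q_k^*)\|$ in $L_\infty$ times $\omega_{M-l}(\wt\varphi(\,\cdot\,-k)g,t)_p$ in $L_p$; integrating $t^{-sq}\frac{dt}{t}$ and \emph{then} interchanging the outer $L_q(\frac{dt}{t})$-norm with the inner $\ell_p$-norm in $k$ by Minkowski's inequality --- legitimate precisely because $p\le q$ --- and using $M-l>s$, (ii), (iv), gives again $\ls\|\,g\,|\,B^s_{p,q}(\R)_\unif\|\,\|\,f\,|\,B^s_{p,q}(\R)\|$. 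Summing over $l$ and adding the $L_p$-bound from (i) yields $\|\,fg\,|\,B^s_{p,q}(\R)\|\ls\|\,g\,|\,B^s_{p,q}(\R)_\unif\|\,\|\,f\,|\,B^s_{p,q}(\R)\|$, i.e.\ $g\in M(B^s_{p,q}(\R))$; with the first inclusion this proves \eqref{ws-02}. (When $p=\infty$, necessarily $q=\infty$; replace $\ell_p$ by $\ell_\infty$ and use $B^s_{\infty,\infty}(\R)=\mathcal{C}^s(\R)$.)

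The step I expect to be the main obstacle is the combinatorial bookkeeping of the mixed Leibniz terms $0\le l<M$: the total difference order $M$ must be split over the two factors compatibly with the \emph{only} smoothness genuinely available --- $f$ merely in $\mathcal{C}^{s-d/p}(\R)$ pointwise and $g$ merely locally in $B^s_{p,q}(\R)$ --- which is exactly what forces the choice $M>s+d/p$; and the decisive interchange of the $L_q(\frac{dt}{t})$- and $\ell_p$-norms rests on $p\le q$. It is precisely this interchange that fails for $q<p$, which is why $B^s_{p,q}(\R)_\unif$ no longer describes $M(B^s_{p,q}(\R))$ in that regime and a strictly larger space must enter.
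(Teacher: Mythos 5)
Your proof is correct in substance, and its two halves match the paper's architecture: the inclusion $M(B^s_{p,q}(\R))\hookrightarrow B^s_{p,q}(\R)_{\unif}$ by testing on translated bumps is exactly Lemma \ref{mult}, and the converse rests, as in Proposition \ref{wichtig}, on the difference characterization (Proposition \ref{diff}), the Leibniz rule for $\Delta_h^M$, a case split according to where the bulk of the difference order lands, the key estimate $\big(\sum_k\|f\,|L_\infty(Q_k)\|^p\big)^{1/p}\lesssim\|f\,|B^s_{p,q}(\R)\|$ (your fact (iv) is the paper's \eqref{ws-19}), and a Minkowski-type interchange of the $\ell_{q/p}$- and $\ell_p$-norms as the unique step where $p\le q$ enters. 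The execution differs in one genuine respect: the paper localizes first, writing $fg=\sum_\mu(\phi_\mu g)(\psi_\mu f)$ and applying Leibniz with order $2m$ to each piece, and must then control $\sum_\mu\|\Delta_h^{2m-u}(\phi_\mu g)\|_p^p$ for the terms carrying high-order differences on the generic element, which it does Fourier-analytically via the Littlewood--Paley decomposition and the Peetre maximal function (formula \eqref{k-02}, Lemma \ref{help}, Proposition \ref{peetremax}); you instead apply Leibniz globally with $M>s+d/p$, treat those terms by the global modulus $\omega_l(f,t)_p$ paired with the H\"older--Zygmund regularity $B^{s-d/p}_{\infty,\infty}(\R)$ of the multiplier, and localize only the terms with $l\le d/p$. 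Your variant is more elementary (no maximal functions), while the paper's localize-first setup is what it recycles verbatim for the case $q<p$ in Theorem \ref{final}. Two small repairs: when $s-d/p\in\N$ and $M-l=s-d/p$, the bound $\sup_{|h|\le t}\|\Delta_h^{M-l}g\,|L_\infty(\R)\|\lesssim t^{s-d/p}$ can fail for the Zygmund class, so replace $s-d/p$ by $s-d/p-\varepsilon$ there (your case split leaves the room, since then $l>d/p$ strictly); and fact (iv) does not actually require $p\le q$ --- the paper obtains it for all $q$ from $B^s_{p,q}(\R)\hookrightarrow B^{s-\varepsilon}_{p,p}(\R)$ and Corollary \ref{klar} --- so the only place $p\le q$ is genuinely used is the interchange you correctly single out as decisive.
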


In proving this theorem we will make use of the characterization of Besov spaces by differences in a way 
similar to Strichartz in his  paper \cite{Str-67}, see also the monographs of  Maz'ya and Shaposnikova \cite{MS1}, \cite{MS2}.

\begin{remark}
\rm 
There is a large number of references dealing with pointwise multipliers for Besov  or Lizorkin-Triebel spaces.
Here we selected only those which include characterizations of $M (B^s_{p,q}(\R))$.
\begin{itemize}
 \item Strichartz \cite{Str-67}  proved \eqref{ws-02} for $p=q=2$. In fact, he was dealing with the more 
general case of Bessel potential spaces
$H^s_p (\R)$, $s>d/p$, but $B^s_{2,2} (\R) = H^s_2 (\R)$ (in the sense of equivalent norms).
His main tool were consisting in characterizations of $H^s_p (\R)$ by differences. 
\item 
Peetre \cite{Pe}, page 151, proved \eqref{ws-02} for $1 \le p=q \le \infty $. He used a method nowadays called paramultiplication, which consists 
in a clever decomposition of the product in the Fourier image.
\item
Maz'ya and Shaposnikova, see \cite[Theorems 4.1.1, 5.3.1, 5.3.2, 5.4.1]{MS2}, proved \eqref{ws-02} for  $1 \le  p=q < \infty $.
Also these authors worked with  characterizations by differences.
\item 
Netrusov \cite{Net} proved characterizations of $M(B^s_{p,q}(\R))$ in cases $0<p=q\le 1$ and\\
$0 <p\le 1$, $q=\infty$ in Fourier analytic terms. This has been the first contribution to the case $p \neq q$.

\item Sickel \cite{Si} proved characterizations of $M(B^s_{p,p}(\R))$ in terms of capacities for all $p$, $0 <p< \infty$, and all  
$s> d/p$.
The used method here is again paramultiplication in connection with the Fourier analytic description of the spaces. 
\item
Smirnov and S. \cite{SS} have shown the identity \eqref{ws-02} in case $1\le  p,q\le \infty$
by using atomic characterizations of Besov spaces.
\item 
Triebel \cite[Proposition 2.22]{Tr06} proved a new characterization of $M(B^s_{p,p}(\R))$, where either $0 <p \le  \infty$ and $s> d/p$
or $0 < p \le 1$ and  $s=d/p$.  
\end{itemize}
\end{remark}

Now we turn to the slightly more complicated case $q <p$.
Here we have to introduce spaces with a  different type of localization.

\begin{definition}\label{multdef}
Let $\psi \in C_0^\infty (\R)$ be a nonnegative  function satisfying 
\be\label{ws-03}
\sum_{\mu \in \Z} \psi (x-\mu) = 1 \qquad \mbox{for all}\quad x \in \R\, .
\ee 
Let $s > 0$,  $0 < p,q\le \infty$,  $m\in \N$ and $s < m \leq s + 1$. 
By using  $\psi_\mu (\, \cdot \, ):= \psi (\, \cdot\,  -\mu)$, $\mu \in \Z$, 
the space $M^s_{p,q}(\R)$ is the  collection of all $f \in L_1^{\ell oc}(\R)$  such that
\beq\label{ws-04}
\| \, f\, |M^s_{p,q}(\R)\|& := & 
\sup_{\|\, \{C_{\mu}\}_\mu\, |\ell_p (\Z)\|\le 1}\Bigg\{\Big\|\, f (\, \cdot\, )\, \Big(\sum_{\mu\in \Z} C_{\mu} \psi_{\mu}(\, \cdot \, )\Big) 
\, \Big|L_p(\R)\Big\|^q
\\
&&    + \quad 
\sum_{k=0}^{\infty}\bigg(2^{ksp} \sup_{|h| < 2^{-k}}  \sum_{\mu\in \Z} |C_{\mu}|^p    \big\|\Delta_h^m(\psi_{\mu}\, f )(\cdot) 
\big|  L_p(\R)\big\|^p \bigg)^{q/p} \Bigg\}^{1/q} <\infty \, .
\nonumber
\eeq
\end{definition}

We fix $\nu$.
By choosing $C_\mu := \delta_{\mu,\nu}$, $\mu \in \Z$, it is easily seen that 
the right-hand side in \eqref{ws-04} reduces to the quasi-norm of $\psi_{\nu}\, f $ in $B^s_{p,q}(\R)$, see Proposition \ref{diff}.
This observation  yields 
\beqq
M^s_{p,q}(\R)\hookrightarrow B^s_{p,q}(\R)_{\unif}\, .
\eeqq
It is not difficult to see that this embedding is proper in case $q <p$ (all details will be given below).
We have the following  nice characterization of $M(B^s_{p,q} (\R))$.

\begin{theorem}\label{nec1}
Let $0< q < p < \infty$ and $s>d/p$. Let $\psi_\mu$, $\mu \in \Z$, be as in Definition \ref{multdef}. 
Then  $f \in M(B^s_{p,q}(\R))$ if and only if $\sum_{\mu \in \Z} C_\mu \psi_\mu f$ belongs to $B^s_{p,q}(\R)$ 
for all $\{C_\mu\}_\mu \in \ell_p (\Z)$ and 
\beqq
\sup_{\| \{C_\mu\}_\mu \, |\ell_p(\Z) \|\le 1} \, \Big\|\sum_{\mu \in \Z} C_\mu \psi_\mu f\Big|B^s_{p,q}(\R)\Big\|  < \infty \, .
\eeqq
\end{theorem}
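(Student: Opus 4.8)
The plan is to prove the two implications separately; both rest on the characterization of $B^s_{p,q}(\R)$ by $m$-th order differences (Proposition~\ref{diff}, with $s<m\le s+1$),
\[
\|F\,|B^s_{p,q}(\R)\|\approx\|F\,|L_p(\R)\|+\Big(\sum_{k\ge0}2^{ksq}\sup_{|h|<2^{-k}}\|\Delta_h^m F\,|L_p(\R)\|^q\Big)^{1/q},
\]
on the localization property of $B^s_{p,q}(\R)$ in the range $q\le p$, which gives $\big(\sum_\mu\|\psi_\mu F\,|B^s_{p,q}\|^p\big)^{1/p}\lesssim\|F\,|B^s_{p,q}\|$, and on the embedding $B^s_{p,q}(\R)\hookrightarrow L_\infty(\R)$, valid here since $s>d/p$. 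Throughout $Q_\mu$ denotes the unit cube centred at $\mu\in\Z$ and $\widetilde\psi_\mu:=\sum_{|\nu-\mu|_\infty\le N}\psi_\nu$ a fattened bump equal to $1$ on a neighbourhood of $Q_\mu$ large enough for all the localizations below.

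\textbf{Necessity.} Assume $f\in M(B^s_{p,q}(\R))$. Since the $\psi_\mu$ are translates of one fixed $C_0^\infty$ bump with uniformly bounded overlap, a direct estimate — e.g.\ by the atomic characterization of $B^s_{p,q}(\R)$, $\sum_\mu C_\mu\psi_\mu$ being a superposition of unit-scale atoms with $\ell_p$-coefficients — yields $\|\sum_\mu C_\mu\psi_\mu\,|B^s_{p,q}(\R)\|\lesssim\|\{C_\mu\}_\mu\,|\ell_p(\Z)\|$ for all $\{C_\mu\}_\mu\in\ell_p(\Z)$. Multiplying by $f$ gives $\sum_\mu C_\mu\psi_\mu f\in B^s_{p,q}(\R)$ with $\|\sum_\mu C_\mu\psi_\mu f\,|B^s_{p,q}\|\le\|f\,|M(B^s_{p,q})\|\,\|\sum_\mu C_\mu\psi_\mu\,|B^s_{p,q}\|\lesssim\|f\,|M(B^s_{p,q})\|\,\|\{C_\mu\}_\mu\,|\ell_p\|$, and taking the supremum over the unit ball of $\ell_p(\Z)$ proves necessity.

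\textbf{Sufficiency.} Suppose $A:=\sup_{\|\{C_\mu\}|\ell_p\|\le1}\|\sum_\mu C_\mu\psi_\mu f\,|B^s_{p,q}(\R)\|<\infty$. Choosing $C_\mu=\delta_{\mu,\nu}$ and using Proposition~\ref{diff} gives $\sup_\nu\|\psi_\nu f\,|B^s_{p,q}\|\le A$, so $f\in B^s_{p,q}(\R)_{\unif}\hookrightarrow L_\infty(\R)$ with $\|f\,|L_\infty\|\lesssim A$, and $fg\in L_1^{\ell oc}(\R)$ for $g\in B^s_{p,q}(\R)$. Fix $g$ with $\|g\,|B^s_{p,q}\|=1$; by Proposition~\ref{diff} it suffices to bound $\|fg\,|L_p\|\le\|f\,|L_\infty\|\,\|g\,|L_p\|\lesssim A$ and, for $|h|<2^{-k}\le1$, the terms of the Leibniz expansion $\Delta_h^m(fg)(x)=\sum_{j=0}^m\binom mj(\Delta_h^j f)(x)(\Delta_h^{m-j}g)(x+jh)$. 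For $j=0$ one uses $|f|\lesssim A$ directly, whence the contribution of $f\cdot\Delta_h^m g$ is $\lesssim A^q\sum_k2^{ksq}\sup_h\|\Delta_h^m g\,|L_p\|^q\lesssim A^q$ by Proposition~\ref{diff} applied to $g$. For $1\le j\le m$ the factor $f$ is localized: on $Q_\mu$ (and $|h|<1$) one replaces $\Delta_h^j f$ by $\Delta_h^j(\widetilde\psi_\mu f)$ and $\Delta_h^{m-j}g(\cdot+jh)$ by $\Delta_h^{m-j}(\widetilde\psi_\mu g)(\cdot+jh)$, using $\|\widetilde\psi_\mu f\,|B^s_{p,q}\|\lesssim A$ uniformly in $\mu$ and $\{\|\widetilde\psi_\mu g\,|B^s_{p,q}\|\}_\mu$, $\{\|\widetilde\psi_\mu g\,|L_\infty\|\}_\mu\in\ell_p(\Z)$ with norms $\lesssim1$ (localization plus the $L_\infty$-embedding). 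In the top term $j=m$, bounding $|g(x+mh)|$ on $Q_\mu$ by a constant $\lesssim\|\widetilde\psi_\mu g\,|L_\infty\|$ and reindexing gives $\|(\Delta_h^m f)g(\cdot+mh)\,|L_p\|^p\lesssim\sum_\nu C_\nu^p\|\Delta_h^m(\psi_\nu f)\,|L_p\|^p$ for some $\{C_\nu\}\in\ell_p(\Z)$ with $\|\{C_\nu\}|\ell_p\|\lesssim1$; splitting $\Z$ into finitely many classes $\Lambda$ on which the supports of the $\Delta_h^m(\psi_\nu f)$ ($\nu\in\Lambda$, $|h|<1$) are pairwise disjoint, one has the exact identity $\big(\sum_{\nu\in\Lambda}C_\nu^p\|\Delta_h^m(\psi_\nu f)|L_p\|^p\big)^{q/p}=\|\Delta_h^m(\sum_{\nu\in\Lambda}C_\nu\psi_\nu f)\,|L_p\|^q$, so Proposition~\ref{diff} and the hypothesis bound the $\Lambda$-piece of $\sum_k2^{ksq}\sup_h(\cdots)^{q/p}$ by $\lesssim A^q$; summing over the finitely many $\Lambda$ using the subadditivity of $t\mapsto t^{q/p}$ — here the assumption $q<p$ enters decisively — gives $\lesssim A^q$ for the term $j=m$. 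The mixed terms $1\le j\le m-1$ are treated in the same way after, in addition, distributing the smoothness in $|h|$ between the two factors (see below).

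\textbf{Main obstacle.} The heart of the argument is the mixed terms $1\le j\le m-1$. Here one cannot afford to put all the $L_\infty$-regularity on either single factor: for small $j$ one uses $\widetilde\psi_\mu f\in B^s_{p,q}(\R)\hookrightarrow B^{s-d/p}_{\infty,\infty}(\R)$ (recall $s-d/p>0$) to gain $|\Delta_h^j(\widetilde\psi_\mu f)|\lesssim A|h|^{\delta}$ with some $\delta>0$, for large $j$ the symmetric gain from $g$, and in the intermediate range (which is nonempty only when $p\le d$) one interpolates by a H\"older splitting $\|uv|L_p\|\le\|u|L_{p/\theta}\|\,\|v|L_{p/(1-\theta)}\|$ combined with the Besov embeddings $B^s_{p,q}(\R)\hookrightarrow B^{s'}_{r,q}(\R)$, or reduces by differentiation to the range $s\in(d/p,1]$, where the Leibniz rule has no mixed terms; the extra factor $|h|^{\delta}$ converts the otherwise divergent dyadic series (divergent because $s$ may exceed $m-j$) into a convergent one, controlled by $A$ via the colouring trick exactly as for $j=m$. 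What makes this delicate is the coupling of the two summations: a naive localization of $\|fg\,|B^s_{p,q}\|$ would only give access to $\ell_p$-control of the local pieces of $g$ — $\big(\sum_\mu\|\psi_\mu g\,|B^s_{p,q}\|^p\big)^{1/p}\lesssim\|g\,|B^s_{p,q}\|$ — while bounding $\|\sum_\mu(\widetilde\psi_\mu f)(\psi_\mu g)\,|B^s_{p,q}\|$ from above would require $\ell_q$-control, and $\ell_p\not\subseteq\ell_q$ for $q<p$. This mismatch is exactly why $M(B^s_{p,q}(\R))$ is strictly larger than $B^s_{p,q}(\R)_{\unif}$ in this range, and forces one to keep $f$ global and to invoke the hypothesis only in the averaged form $\|\sum_\mu C_\mu\psi_\mu f\,|B^s_{p,q}\|\le A\|\{C_\mu\}|\ell_p\|$, with $\{C_\mu\}$ built from the local size of $g$ — which is the Strichartz~\cite{Str-67} and Maz'ya--Shaposhnikova~\cite{MS1},~\cite{MS2} circle of ideas announced after Theorem~\ref{b-main}.
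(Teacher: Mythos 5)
Your necessity argument is correct and agrees in substance with the paper's: the paper proves $\big\|\sum_{\mu\in\Omega_\ell}C_\mu\psi_\mu\,\big|B^s_{p,q}(\R)\big\|\lesssim\|\{C_\mu\}_\mu|\ell_p(\Z)\|\,\|\psi|B^s_{p,q}(\R)\|$ by a direct difference computation over well-separated colour classes $\Omega_\ell$ rather than by atoms, but that is cosmetic. The sufficiency direction, however, contains a genuine gap, and you have located it yourself: the mixed terms $1\le j\le m-1$ of the order-$m$ Leibniz expansion of $\Delta_h^m(fg)$. None of your proposed repairs closes it as written. The route via $f\in B^{s-d/p}_{\infty,\infty}$ gives at best
$\|\Delta_h^jf\cdot\Delta_h^{m-j}g(\cdot+jh)\,|L_p(\R)\|\lesssim A\,|h|^{\min(j,\,s-d/p)}\,\omega_{m-j}(g,|h|)_p$,
and the resulting series $\sum_k2^{ksq}2^{-k\min(j,s-d/p)q}\omega_{m-j}(g,2^{-k})_p^q$ is controlled by $\|g|B^s_{p,q}(\R)\|^q$ only when $s<m-j+\min(j,s-d/p)$; for $j>s-d/p$ this amounts to $j<m-d/p$, which fails for instance for $d=2$, $p=1$, $s=5/2$, $m=3$, $j=2$. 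The alternative devices you list (H\"older splitting with Besov embeddings, reduction by differentiation) are only named, not carried out, and it is exactly in these terms that one would have to feed in the hypothesis on $f$ in its averaged $\ell_p$-form; your sketch never shows how the sequence $\{C_\nu\}_\nu$ built from $g$ enters there.

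The paper removes the mixed terms altogether by estimating $\|fg\,|B^s_{p,q}(\R)\|_{2m}$, i.e.\ by expanding $\Delta_h^{2m}(fg)=\sum_{u=0}^{2m}\binom{2m}{u}\Delta_h^{2m-u}(\phi_\mu g)(\cdot+uh)\,\Delta_h^{u}(\psi_\mu f)$, so that in every term at least one factor carries at least $m$ differences: for $0\le u\le m$ the factor $\Delta_h^u(\psi_\mu f)$ goes into $L_\infty$ and one argues as for \eqref{ws-17} in Proposition \ref{wichtig}, while for $m<u\le2m$ the factor $\Delta_h^{2m-u}(\phi_\mu g)$ goes into $L_\infty$, one uses $\|\Delta_h^u(\psi_\mu f)|L_p(\R)\|\lesssim\|\Delta_h^m(\psi_\mu f)|L_p(\R)\|$, and the hypothesis is invoked with $C_\mu:=c\,\|\phi_\mu g|L_\infty(\R)\|/\|g|B^s_{p,q}(\R)\|$, which lies in the unit ball of $\ell_p(\Z)$ by \eqref{ws-19}. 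This last step is precisely the averaged use of the hypothesis that you carry out correctly for your top term $j=m$ (colouring trick included); formally the paper routes it through Theorem \ref{final}, showing via \eqref{k-06} that your hypothesis forces $f\in M^s_{p,q}(\R)$ and then quoting $M^s_{p,q}(\R)=M(B^s_{p,q}(\R))$. If you replace your order-$m$ expansion by the order-$2m$ one, your arguments for $j=0$ and $j=m$ extend verbatim to the ranges $0\le u\le m$ and $m<u\le2m$, and the proof closes.
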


The second main result of our paper can now formulated as follows.

\begin{theorem}\label{final}
Let $ 0< q <p < \infty$ and $s>d/p$. 
Then we have
\beqq
M(B^s_{p,q}(\R))=M^s_{p,q}(\R)
\eeqq
in the sense of equivalent quasi-norms. 
\end{theorem}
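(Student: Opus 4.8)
The plan is to show the two inclusions $M(B^s_{p,q}(\R)) \hookrightarrow M^s_{p,q}(\R)$ and $M^s_{p,q}(\R) \hookrightarrow M(B^s_{p,q}(\R))$ separately, using Theorem \ref{nec1} as the bridge. For the first inclusion, suppose $f \in M(B^s_{p,q}(\R))$. By Theorem \ref{nec1}, for every sequence $\{C_\mu\}_\mu \in \ell_p(\Z)$ the function $g := \sum_{\mu\in\Z} C_\mu \psi_\mu f$ lies in $B^s_{p,q}(\R)$ with a uniform bound on its quasi-norm. The task is to recognize that the quantity $\|f\,|M^s_{p,q}(\R)\|$ defined in \eqref{ws-04} is comparable to $\sup_{\|\{C_\mu\}_\mu|\ell_p\|\le 1}\|g\,|B^s_{p,q}(\R)\|$. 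Here I would invoke the difference characterization of $B^s_{p,q}(\R)$ (Proposition \ref{diff}) applied to $g$: its quasi-norm is equivalent to $\|g\,|L_p\| + \big(\sum_{k\ge 0} 2^{ksq}\sup_{|h|<2^{-k}}\|\Delta_h^m g\,|L_p\|^q\big)^{1/q}$. The point is that $\Delta_h^m g$ "localizes": because $\psi_\mu$ has support in a fixed ball, $\Delta_h^m(\psi_\mu f)$ is supported near the $\mu$-th cube, so for $|h|$ small the functions $\{\Delta_h^m(\psi_\mu f)\}_\mu$ have boundedly overlapping supports and one gets $\|\Delta_h^m g\,|L_p\|^p \asymp \sum_\mu |C_\mu|^p \|\Delta_h^m(\psi_\mu f)\,|L_p\|^p$ up to harmless cross terms (which are absorbed using that $f$ is locally bounded and the smoothness of the $\psi_\mu$). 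Summing the $L_p$ contribution of $g$ likewise gives the first term in \eqref{ws-04}. This yields $M(B^s_{p,q}(\R)) \hookrightarrow M^s_{p,q}(\R)$.

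For the reverse inclusion $M^s_{p,q}(\R) \hookrightarrow M(B^s_{p,q}(\R))$, let $f \in M^s_{p,q}(\R)$ and let $g \in B^s_{p,q}(\R)$ be arbitrary. I want to estimate $\|fg\,|B^s_{p,q}(\R)\|$. The natural tool is the localization property of Besov spaces in the regime $q<p$: using the partition of unity $\sum_\mu \psi_\mu = 1$, one has $\|fg\,|B^s_{p,q}(\R)\| \asymp \big(\sum_{\mu\in\Z}\|\psi_\mu\, fg\,|B^s_{p,q}(\R)\|^p\big)^{1/p}$ — note the $\ell_p$ sum, which is the correct one precisely because $q<p$ (this is where the asymmetry in the definition of $M^s_{p,q}$ comes from). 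Now write $\psi_\mu fg = (\psi_\mu f)\cdot \tilde\psi_\mu g$ where $\tilde\psi_\mu$ is a smooth cutoff equal to $1$ on $\supp\psi_\mu$; then I would apply the difference characterization to $\psi_\mu f g$ and expand $\Delta_h^m(\psi_\mu f g)$ by a discrete Leibniz rule into a sum of products of $\Delta_h^j(\psi_\mu f)$ with $\Delta_h^{m-j}(\tilde\psi_\mu g)(\cdot + jh)$. The terms carrying all differences on $f$ are controlled by $\sup_\mu\|\tilde\psi_\mu g\,|L_\infty\|$ times the $M^s_{p,q}$-data with $C_\mu$ chosen adapted to $g$; the terms carrying differences on $g$ are controlled by $\|f\,|L_\infty\|$ (finite since $M^s_{p,q}(\R)\hookrightarrow L_\infty$ as $s>d/p$) times $\sum_\mu\|\tilde\psi_\mu g\,|B^s_{p,q}\|^p$, which reassembles to $\|g\,|B^s_{p,q}(\R)\|^p$ again by localization; and the mixed terms require a product estimate in $B^s_{p,q}$ valid for $s>d/p$. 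Collecting, the key is to choose $C_\mu := \|\tilde\psi_\mu g\,|B^s_{p,q}(\R)\|$ (normalized), which lies in $\ell_p$ with norm $\asymp \|g\,|B^s_{p,q}(\R)\|$ by localization; then the right-hand side of \eqref{ws-04} evaluated at this sequence dominates the $f$-difference terms, giving $\|fg\,|B^s_{p,q}(\R)\| \ls \|f\,|M^s_{p,q}(\R)\|\,\|g\,|B^s_{p,q}(\R)\|$. By Theorem \ref{nec1} this also shows $\sup_{\|\{C_\mu\}|\ell_p\|\le1}\|\sum_\mu C_\mu\psi_\mu f\,|B^s_{p,q}\|<\infty$, so $f \in M(B^s_{p,q}(\R))$, and tracking constants gives the norm equivalence.

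The main obstacle I expect is the bookkeeping in the second inclusion: controlling the mixed Leibniz terms $\Delta_h^j(\psi_\mu f)\cdot \Delta_h^{m-j}(\tilde\psi_\mu g)$ for $0<j<m$ uniformly in $\mu$ and summing them in $\ell_p$ against the correct data from \eqref{ws-04}, while simultaneously matching the index choice of $C_\mu$ to $g$ and keeping the dependence on $\|f\,|L_\infty\|$ (which must itself be bounded by $\|f\,|M^s_{p,q}(\R)\|$). A secondary technical point is justifying the localization equivalence $\|\,\cdot\,|B^s_{p,q}\| \asymp (\sum_\mu\|\psi_\mu\,\cdot\,|B^s_{p,q}\|^p)^{1/p}$ in the quasi-Banach range $q<p<\infty$, $s>d/p$; this is essentially the localization property alluded to in the key words, and I would cite or adapt the standard proof (via maximal functions / atomic decompositions) rather than reprove it. Once these estimates are in place, combining them with Theorem \ref{nec1} closes the argument.
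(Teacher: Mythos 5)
There is a genuine gap, and it is in the second inclusion, which is the heart of the theorem. Your opening move there, the two-sided localization $\|fg\,|B^s_{p,q}(\R)\| \asymp \big(\sum_{\mu}\|\psi_\mu fg\,|B^s_{p,q}(\R)\|^p\big)^{1/p}$, is false in the regime $q<p$: by Proposition \ref{s<p,q<u} the embedding $B^s_{p,q,v}(\R)\hookrightarrow B^s_{p,q}(\R)$ holds if and only if $v\le\min(p,q)=q$, so the direction you actually need (the $\ell_p$-sum of local norms controls the global norm) is available only when $p\le q$ --- exactly the excluded case. (The reverse direction, with $v=p\ge\max(p,q)$, does hold, but it points the wrong way.) This failure of localization is precisely why $M(B^s_{p,q}(\R))\neq B^s_{p,q}(\R)_{\unif}$ here and why $M^s_{p,q}(\R)$ has its coupled form; a strategy that begins by relocalizing the product in $\ell_p$ cannot be repaired. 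The paper instead works globally: it estimates $\|fg\,|B^s_{p,q}(\R)\|_{2m}$ via the Leibniz rule $\Delta_h^{2m}(fg)=\sum_{u=0}^{2m}\binom{2m}{u}\sum_\mu \Delta_h^{2m-u}(\phi_\mu g)(\cdot+uh)\,\Delta_h^{u}(\psi_\mu f)$, reuses the estimate \eqref{ws-17} from Proposition \ref{wichtig} for $u\le m$ (that part never used $p\le q$), and for $u>m$ feeds the specific normalized sequence $C_\mu := c\,\|\phi_\mu g\,|L_\infty(\R)\|/\|g\,|B^s_{p,q}(\R)\|$ into the definition \eqref{ws-04}; that this sequence lies in the unit ball of $\ell_p(\Z)$ is \eqref{ws-19}, which is where $s>d/p$ enters through $B^{s-\varepsilon}_{p,p}(\R)\hookrightarrow L_\infty(\R)$ and Corollary \ref{klar}. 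Your instinct to choose $C_\mu$ adapted to $g$ is the right one, but it must be executed inside this global difference estimate, not after an $\ell_p$-relocalization.

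The first inclusion also has a gap, though a repairable one. The claimed equivalence $\|\Delta_h^m(\sum_\mu C_\mu\psi_\mu f)\,|L_p(\R)\|^p \asymp \sum_\mu |C_\mu|^p\|\Delta_h^m(\psi_\mu f)\,|L_p(\R)\|^p$ fails in the ``$\gtrsim$'' direction: neighbouring terms can cancel (take $f\equiv 1$ and all $C_\mu$ equal on a block --- the left side vanishes where the partition of unity sums to a constant, the right side does not), and for $p\le 1$ there is no meaningful expansion into ``cross terms'' to absorb. The paper's device is to split $\Z$ into finitely many families $\Omega_\ell$ whose supports are $2m$-separated, so that within each family the $L_p$ integral of the difference splits exactly, and then to test $T_f$ on $\sum_{\mu\in\Omega_\ell}C_\mu\psi_\mu$, whose $B^s_{p,q}$-norm is $\lesssim \|\{C_\mu\}_\mu|\ell_p(\Z)\|$. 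Finally, be careful with Theorem \ref{nec1}: its sufficiency half is derived in the paper \emph{from} Theorem \ref{final}, so invoking it to conclude $f\in M(B^s_{p,q}(\R))$ at the end of your second inclusion would be circular; only its necessity half (which follows from the $\Omega_\ell$ argument alone) is safe to use.
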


We supplement our findings with the more easy case of $p=\infty$.

\begin{theorem}\label{final2}
Let $s>0$ and $0<q\leq \infty$. Then it holds $M(B^s_{\infty,q}(\R))=B^s_{\infty,q}(\R)$  in the sense of equivalent quasi-norms. 
\end{theorem}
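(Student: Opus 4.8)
The plan is to split the equality $M(B^s_{\infty,q}(\R))=B^s_{\infty,q}(\R)$ into its two inclusions; both turn out to be comparatively soft. For ``$\subseteq$'' I would use that the constant function $\mathbf 1\equiv 1$ belongs to $B^s_{\infty,q}(\R)$ for every $s$ and $q$: indeed $\|\,\mathbf 1\,|L_\infty(\R)\|=1$ while $\Delta_h^m\mathbf 1\equiv 0$ for all $m\ge 1$, so the characterization of Besov spaces by differences (Proposition~\ref{diff}) gives $\|\,\mathbf 1\,|B^s_{\infty,q}(\R)\|\approx 1$. Then, for $f\in M(B^s_{\infty,q}(\R))$, the multiplication operator $g\mapsto fg$ is bounded on $B^s_{\infty,q}(\R)$ (closed graph theorem), and taking $g=\mathbf 1$ yields $f=f\cdot\mathbf 1\in B^s_{\infty,q}(\R)$ together with $\|\,f\,|B^s_{\infty,q}(\R)\|\lesssim \|\,f\,|M(B^s_{\infty,q}(\R))\|$. (Should the definition of $M$ only presuppose that $f$ is a tempered distribution, one first observes that $f\psi\in B^s_{\infty,q}(\R)\hookrightarrow L_\infty(\R)$ for every $\psi\in C_0^\infty(\R)$, whence $f\in L_1^{\ell oc}(\R)$ and the product $f\cdot\mathbf 1=f$ is meaningful.)

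For the reverse inclusion ``$\supseteq$'' the task is to show that $B^s_{\infty,q}(\R)$ is a pointwise multiplication algebra, i.e.
\[
\|\,fg\,|B^s_{\infty,q}(\R)\|\lesssim \|\,f\,|B^s_{\infty,q}(\R)\|\cdot\|\,g\,|B^s_{\infty,q}(\R)\|\,,\qquad f,g\in B^s_{\infty,q}(\R)\,.
\]
This at once gives $B^s_{\infty,q}(\R)\hookrightarrow M(B^s_{\infty,q}(\R))$ with $\|\,f\,|M(B^s_{\infty,q}(\R))\|\lesssim\|\,f\,|B^s_{\infty,q}(\R)\|$, and combined with the first step it yields the claimed equivalence of quasi-norms. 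The algebra property is classical for $s>0$ and $0<q\le\infty$; I would establish it along the lines of Strichartz \cite{Str-67} and of Maz'ya--Shaposnikova \cite{MS2} by means of Proposition~\ref{diff}: use the embedding $B^s_{\infty,q}(\R)\hookrightarrow L_\infty(\R)$ (valid precisely because $s>0$), insert the Leibniz formula for higher-order differences
\[
\Delta_h^m(fg)(x)=\sum_{j=0}^m\binom{m}{j}\,\big(\Delta_h^j f\big)(x)\,\big(\Delta_h^{m-j}g\big)(x+jh)
\]
to estimate $\sup_{|h|<2^{-k}}\|\,\Delta_h^m(fg)\,|L_\infty(\R)\|$ by a finite sum of products of the corresponding quantities for $f$ and $g$ (the zeroth difference being replaced by the $L_\infty$-norm), and then apply H\"older's inequality in the remaining summation over $k$ (with the $q$-triangle inequality when $q<1$).

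The step I expect to be the main --- though still routine --- obstacle is the control of the intermediate terms $1\le j\le m-1$ above when $s\ge 1$, since then neither factor is an $L_\infty$-norm; I would handle them by the elementary bounds $\sup_{|h|<t}\|\,\Delta_h^j f\,|L_\infty(\R)\|\lesssim t^{\min(j,s)}\,\|\,f\,|B^s_{\infty,q}(\R)\|$ (for $j\neq s$, with a harmless logarithmic factor when $j=s$), or, more comfortably, by first reducing via the standard description of $B^s_{\infty,q}(\R)$ through derivatives to a purely fractional order $0<s-\lfloor s\rfloor<1$, in which case $m=1$ works and no intermediate terms appear at all. Beyond this the argument is elementary, which is exactly why $p=\infty$ is the easy case. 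I would stress in the write-up that the answer is genuinely $B^s_{\infty,q}(\R)$ and \emph{not} the uniform space $B^s_{\infty,q}(\R)_{\unif}$ of Definition~\ref{unif}: for $q<\infty$ the latter is strictly larger than $B^s_{\infty,q}(\R)$, so Theorem~\ref{b-main} does \emph{not} extend to the range $q<p=\infty$, and the reason $M(B^s_{\infty,q}(\R))$ collapses to the smaller space is precisely that the constant function is an admissible test function here --- which it is not when $p<\infty$.
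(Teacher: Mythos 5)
Your proposal is correct and follows essentially the same route as the paper: the inclusion $M(B^s_{\infty,q}(\R))\subseteq B^s_{\infty,q}(\R)$ by testing against the constant function $\mathbf 1\in B^s_{\infty,q}(\R)$, and the reverse inclusion from the multiplication-algebra property of $B^s_{\infty,q}(\R)$ for $s>0$. The only difference is that the paper simply invokes Theorem \ref{algebra} for the algebra property instead of re-deriving it via the Leibniz formula for differences, so your extra work there is sound but not needed.
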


\begin{remark}
\rm 
(i)  Theorem \ref{final} seems to be a novelty. We are not aware of any additional reference in this direction.
\\
(ii) The simple characterization of  $M(B^s_{\infty,q}B(\R))$ has been known, see \cite{SS}, but \cite{SS} was never published.
\\
(iii) Of course, Theorem \ref{final2} and Theorem \ref{b-main} overlap in case $p=q=\infty$. 
In this context it is of certain interest to notice that 
$M(B^s_{\infty,q}(\R)) = B^s_{\infty,q}(\R)_\unif$ if and only if $q=\infty$.
\end{remark}

Finally we collect results on the limiting situation $s=d/p>0$.

\begin{theorem}\label{final3}
(i) Let  $0<p = q\leq 1$ and $s=d/p$. Then 
 \beqq
 M(B^s_{p,p}(\R))=B^s_{p,p}(\R)_{\unif}
 \eeqq
holds in the sense of equivalent quasi-norms.
\\
(ii) Let $ 0< q <p \le 1$ and $s=d/p$. 
Then we have
\beqq
M(B^s_{p,q}(\R))=M^s_{p,q}(\R)
\eeqq
in the sense of equivalent quasi-norms.
\\
(iii)  Let $\psi_\mu$, $\mu \in \Z$, be as in Definition \ref{multdef}. 
Under the same restrictions as in (ii) \\
 $f \in M(B^s_{p,q}(\R))$ if and only if $\sum_{\mu \in \Z} C_\mu \psi_\mu f$ belongs to $B^s_{p,q}(\R)$ 
for all $\{C_\mu\}_\mu \in \ell_p (\Z)$ and 
\beqq
\sup_{\| \{C_\mu\}_\mu \, |\ell_p(\Z) \|\le 1} \, \Big\|\sum_{\mu \in \Z} C_\mu \psi_\mu f\Big|B^s_{p,q}(\R)\Big\|  < \infty \, .
\eeqq
\end{theorem}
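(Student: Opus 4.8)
The plan is to reduce everything to the machinery already developed for the cases $s>d/p$ and to show that the endpoint $s=d/p$ is not genuinely different once $p\le 1$. Part (i) should follow the exact same strategy as Theorem \ref{b-main}: the necessity $M(B^s_{p,p}(\R))\hookrightarrow B^s_{p,p}(\R)_{\unif}$ is soft and dimension-independent — it only uses that $\psi\in C_0^\infty(\R)$ is itself a multiplier with $\|\,\psi f \,| B^s_{p,p}\|\le c\,\|\,f\,|M(B^s_{p,p})\|$ after translation, and this argument never invoked $s>d/p$. For the converse, i.e. $B^s_{p,p}(\R)_{\unif}\hookrightarrow M(B^s_{p,p}(\R))$, I would invoke the localization property of Besov spaces: for $p=q$ one has $\|\,g\,|B^s_{p,p}(\R)\|\asymp \big(\sum_{\mu\in\Z}\|\,\psi_\mu g\,|B^s_{p,p}(\R)\|^p\big)^{1/p}$ with $\{\psi_\mu\}$ a smooth partition of unity, and this identity remains valid at $s=d/p$ (indeed for all $s>0$, $0<p<\infty$). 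Then for $f\in B^s_{p,p}(\R)_{\unif}$ and $g\in B^s_{p,p}(\R)$ one estimates $\|\,\psi_\mu(fg)\,|B^s_{p,p}\|$ by splitting $\psi_\mu f g = f\cdot(\psi_\mu g)$ and using that $B^s_{p,p}$ with $s>d/p$... — wait, here $s=d/p$, so instead one uses that on the support of $\psi_\mu$ the product estimate $\|\,f\cdot h\,|B^s_{p,p}\|\le c\,\|\,\widetilde\psi_\mu f\,|B^s_{p,p}\|\cdot\|\,h\,|B^s_{p,p}\|$ holds for $h$ supported near $\mu$, because $B^{d/p}_{p,p}(\R)$ with $0<p\le 1$ is an algebra under multiplication of boundedly-supported functions — this is the crucial input that fails for $p>1$ at the endpoint and is why we must restrict to $p\le 1$. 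Summing the $p$-th powers over $\mu$ and using $\sup_\mu\|\,\widetilde\psi_\mu f\,|B^s_{p,p}\|\le c\,\|\,f\,|B^s_{p,p}(\R)_{\unif}\|$ closes the loop.

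For parts (ii) and (iii) the structure mirrors Theorems \ref{nec1} and \ref{final}. First I would prove (iii): the statement "$f\in M(B^s_{p,q}(\R))$ iff $\sum C_\mu\psi_\mu f\in B^s_{p,q}$ for all $\{C_\mu\}\in\ell_p$ with uniformly bounded norm" is an abstract duality/boundedness reformulation. The forward direction is immediate since $\sum C_\mu\psi_\mu = $ a function bounded in $L_\infty$ with bounded derivatives... no — more carefully, since $\{C_\mu\psi_\mu\}$ sums to a function lying in $M(B^s_{p,q})$ uniformly (one checks $\|\sum C_\mu\psi_\mu\,|B^s_{p,q}(\R)_{\unif}\|\le c\,\|\{C_\mu\}\,|\ell_\infty\|\le c$, hence it is a multiplier with norm controlled by $\ell_p\hookrightarrow\ell_\infty$ for $p\le 1$ — again $p\le1$ is used), multiplying by $f\in M(B^s_{p,q})$ keeps us in $B^s_{p,q}$. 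For the reverse direction one takes an arbitrary $g\in B^s_{p,q}(\R)$, writes $fg=\sum_\mu \psi_\mu fg$ and must control $\|\,fg\,|B^s_{p,q}\|$; using the localization property again (for $q\le p$ it reads $\|\,h\,|B^s_{p,q}(\R)\|\asymp \inf\{\ldots\}$ with the more delicate form from Proposition \ref{diff}) together with a decomposition $g=\sum g_\mu$, $g_\mu := \psi_\mu^\# g$ for a fattened partition, with $\|\{C_\mu\}\|_{\ell_p}$, $C_\mu := \|\,g_\mu\,|B^s_{p,q}\|$-type coefficients, reduces the bound on $\|\,fg\,|B^s_{p,q}\|$ exactly to the finiteness of $\sup_{\|\{C_\mu\}\|_{\ell_p}\le1}\|\sum C_\mu\psi_\mu f\,|B^s_{p,q}\|$.

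Once (iii) is in hand, (ii) follows by identifying the quantity in (iii) with $\|\,f\,|M^s_{p,q}(\R)\|$. Here I would reuse Proposition \ref{diff}: the characterization by differences expresses $\|\sum C_\mu\psi_\mu f\,|B^s_{p,q}(\R)\|$ in terms of $\|\Delta_h^m(\psi_\mu f)\,|L_p\|$ exactly as in \eqref{ws-04}, because the partition of unity $\{\psi_\mu\}$ has overlapping supports of bounded multiplicity and $\Delta_h^m(\sum_\mu C_\mu\psi_\mu f)$ can be expanded by the Leibniz-type formula for finite differences; the resulting cross terms are absorbed using $s<m\le s+1$ and the $p\le 1$ quasi-norm (summing $p$-th powers). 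This is essentially the same computation as in the proof of Theorem \ref{final} and carries over verbatim to $s=d/p$ once the underlying difference characterization of $B^{d/p}_{p,q}(\R)$, $0<q<p\le1$, is available — which it is, since characterizations by differences hold for $B^s_{p,q}(\R)$ whenever $s>d(1/p-1)_+$, and $d/p > d(1/p-1)$ for $p\le1$.

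The main obstacle, and the only place where genuine endpoint care is needed, is the product/algebra estimate at $s=d/p$: one must verify that for functions supported in a fixed ball, $\|\,uv\,|B^{d/p}_{p,q}(\R)\|\le c\,\|\,u\,|B^{d/p}_{p,q}(\R)\|\,\|\,v\,|B^{d/p}_{p,q}(\R)\|$ (or the asymmetric variant with one factor in $L_\infty\cap B^{d/p}_{p,q}$). For $0<p\le1$ this is true — it is precisely the borderline of the algebra property, cf. the Franke–Jawerth / Sick/Triebel embedding $B^{d/p}_{p,q}(\R)\hookrightarrow L_\infty(\R)$ which holds for $q\le1$ when $p\le1$ — but it must be checked that our range $0<q<p\le1$ indeed forces $q\le1$, which it does. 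For $p>1$ at $s=d/p$ the embedding into $L_\infty$ fails and so does multiplicativity, explaining why Theorem \ref{final3} is stated only for $p\le1$. I expect the write-up of this algebra estimate (via the difference characterization and Hölder in the appropriate quasi-norm) to be the technical heart; everything else is bookkeeping with localization and partitions of unity already prepared in the earlier sections.
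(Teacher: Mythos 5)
Your proposal follows the paper's own route. Part (i) is proved there exactly as you describe: the soft embedding $M(B^s_{p,p}(\R))\hookrightarrow B^s_{p,p}(\R)_{\unif}$ from Lemma \ref{mult} (which never used $s>d/p$), and the converse via the localization identity $B^{d/p}_{p,p}(\R)=B^{d/p}_{p,p,p}(\R)$ of Corollary \ref{klar} combined with the algebra property of $B^{d/p}_{p,p}(\R)$, $0<p\le 1$ (this is Proposition \ref{limes}). For (ii) and (iii) the paper likewise reruns the proofs of Theorems \ref{final} and \ref{nec1}, with precisely the endpoint modification you identify: the inequality \eqref{ws-19}, which for $s>d/p$ used $B^{s-\varepsilon}_{p,p}(\R)\hookrightarrow L_\infty(\R)$, is replaced at $s=d/p$ by $\sum_\mu\|\phi_\mu g|L_\infty(\R)\|^p\lesssim\sum_\mu\|\phi_\mu g|B^{d/p}_{p,q}(\R)\|^p\lesssim\|g|B^{d/p}_{p,q}(\R)\|^p$, using $B^{d/p}_{p,q}(\R)\hookrightarrow L_\infty(\R)$ for $q\le 1$ (forced by $q<p\le 1$) together with Proposition \ref{s<p,q<u}(ii); necessity comes from Lemma \ref{nec}, which is parameter-free.

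One local misstep: in your forward direction of (iii) you argue that $\sum_\mu C_\mu\psi_\mu$ is a pointwise multiplier because its $B^s_{p,q}(\R)_{\unif}$-quasi-norm is bounded. In the regime $q<p$ that inference is exactly what the paper refutes ($M(B^s_{p,q}(\R))\subsetneq B^s_{p,q}(\R)_{\unif}$ there), and it is also more than is needed: it suffices that $\sum_{\mu\in\Omega_\ell}C_\mu\psi_\mu$ belongs to $B^s_{p,q}(\R)$ with quasi-norm $\lesssim\|\{C_\mu\}_\mu|\ell_p(\Z)\|\,\|\psi|B^s_{p,q}(\R)\|$ (the elementary computation following \eqref{k-05}), after which one applies the multiplier $f$ to this element of $B^s_{p,q}(\R)$. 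With that repair your argument coincides with the paper's.
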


\begin{remark}\label{abcd}
 \rm
 (i) In Corollary \ref{abc} below we shall prove that in the Banach space case Theorem \ref{b-main} and 
 Theorem \ref{final3} cover all cases where we have the coincidence $M(B^s_{p,q}(\R))=B^s_{p,q}(\R)_{\unif}$.
 \\
 (ii) Let $\Omega $ be  an open, nontrivial and  bounded subset of $\R$. 
 Define $B^s_{p,q} (\Omega)$ as the collection of all distributions $g \in \mathcal{D}'(\Omega)$ such that there exists 
 some $f \in B^s_{p,q} (\R)$ satisfying $f(\varphi) = g (\varphi)$ for all $\varphi \in \mathcal{D}(\Omega)$.
 Equipped with the quotient norm
 \[
  \|\, g \, | B^s_{p,q} (\Omega)\|:=  \inf \Big\{ \|\, f \, | B^s_{p,q} (\R)\| ~: \quad f_{|_\Omega} =g \Big\} 
 \]
$B^s_{p,q} (\Omega)$ becomes a quasi-Banach space.
Intrinsic characterizations, e.g., in the spirit of Proposition \ref{diff}, are known in case of a Lipschitz boundary, we refer to Dispa \cite{Di} and Triebel \cite[4.1.4]{Tr06}.
Essentially as a consequence of Theorem \ref{algebra} below it follows
\[
M(B^s_{p,q}(\Omega)) = B^s_{p,q} (\Omega)
\]
if $0 < p,q\le \infty$ and either $s>d/p$ or $s=d/p>0$ and $0 < q \le 1$.
Hence, the difficulties in determining $M(B^s_{p,q}(\R))$ are connected with the unboundedness 
of $\R$ and the difficult localization properties of Besov spaces, see Proposition \ref{s<p,q<u}.
\end{remark}


\subsection*{A short overview on further results in case $s\le d/p$}


As a service for the reader we finish this section with an overview about the knowledge on $M(B^s_{p,q}(\R))$ in case $s\le d/p$.

\begin{itemize}
 \item Let $1\le p < \infty$ and $0< s \le d/p$. Then $M(B^s_{p,p}(\R))$ has been characterized by 
Maz'ya and Shaposnikova, see \cite[Theorems 4.1.1, 5.3.1, 5.3.2, 5.4.1]{MS2}.
\item
Let  $0 <p =q \le 1$  and $d \, ( \frac 1p -1)< s \le d/p $. Then $M(B^s_{p,p}(\R))$ has been characterized by 
Netrusov \cite{Net}. Here we wish to mention that the description of $M(B^s_{p,p}(\R))$ given by Netrusov looks different compared to 
Theorem \ref{final3}.
\item
Let  $0 <p \le 1$  and $d\, (\frac 1p -1)< s \le d/p$. Then $M(B^s_{p,\infty}(\R))$ has been characterized by 
Netrusov \cite{Net}. 
\item
Let  $0 <p=q < \infty$  and $d\, \max (0, \frac 1p -1)< s \le d/p$. Then $M(B^s_{p,p}(\R))$ has been characterized by 
Sickel \cite{Si}.
\item 
Let $p=\infty$ and $s=0$. In \cite{KS} the spaces $M(B^s_{\infty,1}(\R))$ and $M(B^s_{\infty,\infty}(\R))$ 
have been characterized.
\item 
Triebel  \cite[Theorem 2.25]{Tr06} has found a new characterization of $M(B^s_{p,p}(\R))$, $0< p \le 1$ and $s> d(\frac 1p - 1)$,
in terms of the quantity
\[
\sup_{\mu \in \Z} \sup_{j \in \N_0} \, \| \psi_\mu (\, \cdot\, ) \, f(2^{-j}\cdot\,)\, |B^s_{p,p} (\R) \|
\] 
where $\psi_\mu$ is defined as in Definition \ref{multdef}, see also Schneider and Vybiral \cite{SV}.
\end{itemize}

The paper will be organized as follows.
In Section \ref{def} we collect all what we need about the function spaces under consideration.
This will be followed by a short section including basic properties of pointwise multipliers.
The next Section  is devoted to the proof of Theorem \ref{b-main} including some limiting cases 
with $s=d/p$.
In Section \ref{main4} we deal with the proof of Theorem \ref{final}.


\subsection*{Notation}


As usual, $\N$ denotes the natural numbers, $\N_0=\N\cup\{0\}$, 
$\zz$ denotes the integers, 
$\re$ the real numbers, 
and $\C$ the complex numbers.  For a real number $a$ we put $a_+ := \max(a,0)$. The letter $d\in \N, \ d>1,$ is always reserved for the underlying dimension in $\R$ and $\Z$.

If $X$ and $Y$ are two (quasi-)normed spaces, the (quasi-)norm
of an element $x$ in $X$ will be denoted by $\|x\,|\,X\|$. 
The symbol $X \hookrightarrow Y$ indicates that the
identity operator is continuous. For two sequences $a_n$ and $b_n$ we will write $a_n \lesssim b_n$ if there exists a
constant $c>0$ such that $a_n \leq c\,b_n$ for all $n$. We use $a_n \asymp b_n$ if $a_n \lesssim b_n$ and $b_n
\lesssim a_n$. 

Let $\cs(\R)$ be the Schwartz space of all complex-valued rapidly decreasing infinitely differentiable  functions on $\R$. 
The topological dual, the class of tempered distributions, is denoted by $\cs'(\R)$ (equipped with the weak topology).
The Fourier transform on $\cs(\R)$ is given by 
\[
\cf \varphi (\xi) = (2\pi)^{-d/2} \int_{\R} \, e^{-ix \xi}\, \varphi (x)\, dx \, , \qquad \xi \in \R\, .
\]
The inverse transformation is denoted by $\cfi $.
We use both notations also for the transformations defined on $\cs'(\R)$\,.


\section{Besov spaces}\label{def}


General references for Besov spaces are, e.g., the  monographs of Nikol'skij \cite{Ni}, Peetre \cite{Pe} and 
Triebel \cite{Tr83}, \cite{Tr92}, \cite{Tr06}.
To introduce Besov spaces for the full range of parameters we make use of Fourier analysis, a way, 
originally introduced by Peetre \cite{Pe} and later propagated also by Triebel  \cite{Tr83}, \cite{Tr92}, \cite{Tr06}.


\subsection{Definition and basic properties}


Let $\varphi_0 \in C_0^{\infty}({\R})$ be a non-negative function such that 
 $\varphi_0(x) = 1$ if $|x|\leq 1$ and $ \varphi_0 (x) =0$ if $|x|\geq 3/2$. 
For $k\in \N$ we define
\beqq
         \varphi_k(x) = \varphi_0(2^{-k}x)-\varphi_0(2^{-k+1}x) ,\qquad\ x \in \R\, . 
\eeqq
Because of 
\[
 \sum_{k=0}^\infty \varphi_k(x) = 1\, , \qquad x\in \R\, , 
\]
and
\[
 \supp \varphi_k \subset \big\{x\in \R: \: 2^{k-1}\le |x|\le 3 \cdot 2^{k-1}\big\}\, , \qquad k \in \N\, ,
\]
we  call the system $(\varphi_k)_{k\in \N_0 }$ a smooth  dyadic decomposition of unity on $\R$. 
Clearly, 
by the Paley-Wiener-Schwarz theorem, 
\be\label{ws-09}
f_k (x):= \cfi[\varphi_{k}\, \cf f](x)\, , \qquad x \in \R\, , \quad k \in \N_0\, .
\ee
is a smooth function for all $f\in \cs'(\R)$.

\begin{definition}\label{def-do}
Let $(\varphi_k)_{k\in \N_0 }$ be the above system. Let $0<  p,q\leq \infty$  and $s \in \re$.  Then  $ B^{s}_{p,q}(\re^d)$ is the
         collection of all tempered distributions $f \in \mathcal{S}'(\R)$
         such that
\beqq
          \|\, f \, |B^s_{p,q}(\R)\|_{\varphi_0} :=
         \bigg(\sum\limits_{k=0}^{\infty} 2^{k s q}\, \big\|\, \cfi[\varphi_{k}  \cf f](\, \cdot \, )\, 
         \big|L_p(\re^d)\big\|^q\bigg)^{1/q} <\infty
\eeqq   
(with usual modification if $q= \infty$).
\end{definition}

Clearly, Besov spaces are quasi-Banach spaces independent of the generator $\varphi_0 $ (in the sense of equivalent quasi-norms).
Therefore we will write $\|\, f \, |B^s_{p,q}(\R)\|$ instead of $\|\, f \, |B^s_{p,q}(\R)\|_{\varphi_0}$.
\\
For us, embeddings into $L_1^{\ell oc} (\R)$ and $L_\infty (\R)$ will be of some interest.

\begin{lemma}\label{emb1}
Let $0 < p,q\le \infty$ and $s \in \re$. 
\\
{\rm (i)} Let $s>d\, \max (0, \frac 1p -1)$. Then 
$B^s_{p,q} (\R)$ is continuously embedded into $L_{\max(1,p)} (\R)$.
\\
{\rm (ii)} The Besov space $B^s_{p,q} (\R)$ is continuously embedded into  $L_\infty (\R)$ if and only if 
either $s> d/p$ or $ s=d/p$ and $0< q \le 1$.
\end{lemma}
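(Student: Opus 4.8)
The plan is to obtain part (i) and the ``if'' direction of part (ii) from a single frequency-localised estimate followed by a summation, and to obtain the ``only if'' direction of (ii) from a scaling example together with a lacunary superposition example.

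First I would invoke Nikol'skii's inequality: if $g\in L_p(\R)$, $0<p\le u\le\infty$, and $\supp\cf g$ is contained in a ball of radius $R$, then $\|\,g\,|L_u(\R)\|\ls R^{\,d(1/p-1/u)}\,\|\,g\,|L_p(\R)\|$, with a constant independent of $R$ and $g$. Since each $f_k:=\cfi[\varphi_k\,\cf f]$ has Fourier transform supported in a ball of radius $\asymp 2^k$, I apply this with $u:=\max(1,p)$ in case (i) and with $u:=\infty$ in case (ii); using the ordinary triangle inequality in $L_u(\R)$ (permissible since $u\ge1$ in both situations) this yields
\[
\|\,f\,|L_u(\R)\|\;\le\;\sum_{k=0}^\infty\|\,f_k\,|L_u(\R)\|\;\ls\;\sum_{k=0}^\infty 2^{-k(s-\sigma)}\,\big(2^{ks}\,\|\,f_k\,|L_p(\R)\|\big),\qquad\sigma:=d\big(\tfrac1p-\tfrac1u\big).
\]
In case (i) one has $\sigma=d\max(0,\tfrac1p-1)$, hence $s-\sigma>0$ by hypothesis; in case (ii) one has $\sigma=d/p$, hence $s-\sigma\ge0$, with strict inequality precisely when $q>1$. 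It then remains to bound the last sum by $\|\,f\,|B^s_{p,q}(\R)\|$: for $0<q\le1$ one uses $\ell_q\hookrightarrow\ell_1$ together with $2^{-k(s-\sigma)}\le1$; for $1<q\le\infty$ one uses H\"older's inequality with exponents $q,q'$, the series $\sum_k 2^{-k(s-\sigma)q'}$ being convergent since $s>\sigma$ in that case. This settles (i) and the sufficiency in (ii). (Alternatively the positive direction may be packaged as a chain of known embeddings $B^s_{p,q}(\R)\hookrightarrow B^{\sigma}_{u,1}(\R)\hookrightarrow L_u(\R)$, using the Sobolev embedding inside the Besov scale together with $B^0_{u,1}(\R)\hookrightarrow L_u(\R)$.)

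For the necessity of the condition in (ii) I would fix $g\in\cs(\R)$ with $\cf g\ge0$ smooth and supported in $\{1/2\le|\xi|\le2\}$, normalised so that $g(0)=1$; then $g(2^a\,\cdot\,)$ has Fourier transform supported in $\{2^{a-1}\le|\xi|\le2^{a+1}\}$, it meets only a bounded number of dyadic frequency blocks, and $\|\,g(2^a\cdot)\,|B^s_{p,q}(\R)\|\ls 2^{a(s-d/p)}$ while its value at $0$ equals $1$ (with $d/p$ read as $0$ when $p=\infty$). If $s<d/p$, the family $g(2^\ell\,\cdot\,)$, $\ell\in\N$, has Besov quasi-norm $\ls 2^{\ell(s-d/p)}\to0$ but $L_\infty$-norm $\ge1$, so no inequality $\|\,\cdot\,|L_\infty(\R)\|\le c\,\|\,\cdot\,|B^s_{p,q}(\R)\|$ can hold. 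If $s=d/p$ and $q>1$, I pick a sufficiently rapidly increasing sequence $a_1<a_2<\cdots$ in $\N$ so that the frequency blocks of the $g(2^{a_j}\,\cdot\,)$ lie in pairwise disjoint dyadic rings, and coefficients $c_j\ge0$ with $\{c_j\}_j\in\ell_q\setminus\ell_1$ (possible exactly because $q>1$, e.g.\ $c_j=1/j$ for $1<q<\infty$). Disjointness of the blocks gives $\big\|\sum_{j\le N}c_j\,g(2^{a_j}\cdot)\,\big|B^{d/p}_{p,q}(\R)\big\|^q=\sum_{j\le N}|c_j|^q\,\|\,g(2^{a_j}\cdot)\,|B^{d/p}_{p,q}(\R)\|^q\ls\sum_{j\le N}|c_j|^q$, so the partial sums stay bounded in $B^{d/p}_{p,q}(\R)$, whereas their value at $0$ equals $\sum_{j\le N}c_j\to\infty$; again $B^{d/p}_{p,q}(\R)\hookrightarrow L_\infty(\R)$ is impossible.

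The lemma is classical, so there is no genuine obstacle; the only points demanding a little care in a self-contained treatment are the sharp form of Nikol'skii's inequality, with constant independent of $R$ and $g$, in the quasi-Banach range $p<1$, and, in the last step, the reduction of the Besov quasi-norm of a lacunary superposition to the weighted $\ell_q$-norm of the coefficients --- both routine once the dyadic frequency blocks are chosen well separated.
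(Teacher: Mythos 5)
Your proof cannot be matched against anything in the paper, because the paper does not prove Lemma \ref{emb1} at all: the lemma is stated as a classical fact, and Remark \ref{emb2} simply refers the reader to the literature (Lizorkin's supplement to the Russian edition \cite{TL} of Triebel's monograph for the Banach case, and \cite{sitr} for the general quasi-Banach case). What you have written is, in substance, the standard proof found in those sources, and it is correct. The positive directions via Nikol'skii's inequality on the blocks $f_k=\cfi[\varphi_k\cf f]$, followed by $\ell_q\hookrightarrow\ell_1$ when $q\le 1$ and H\"older in $k$ when $q>1$, are exactly right, including the bookkeeping that $s-\sigma=0$ is admissible only when $q\le1$; the dilation family $g(2^\ell\,\cdot\,)$ disposes of $s<d/p$, and the lacunary superposition with coefficients in $\ell_q\setminus\ell_1$ disposes of $s=d/p$, $q>1$. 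The two ingredients you flag as needing care are indeed the only nonelementary ones: Nikol'skii's inequality with constant independent of $R$ in the range $0<p<1$ must be quoted (it is in \cite{Ni} and in \cite[1.3.2]{Tr83}), while the block estimates $\|\cfi[\varphi_k\cf g(2^{a}\cdot)]\,|L_p(\R)\|\ls 2^{-ad/p}$ in your counterexamples can in fact be obtained by direct rescaling of the fixed Schwartz function $g$ (only finitely many $k$ per dilation contribute, and $\varphi_k(2^a\cdot)\cf g$ runs through finitely many fixed Schwartz functions), so no quasi-Banach multiplier theorem is needed there. One small stylistic point: you should also record that the absolutely convergent series $\sum_k f_k$ in $L_u(\R)$ has the same distributional limit as the defining decomposition of $f$, so that $f$ itself, and not merely some representative, lies in $L_u(\R)$; this is routine but is the step that turns the quasi-norm estimate into the asserted embedding.
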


\begin{remark}\label{emb2}
 \rm
 (i)  As it is well-known,  $B^s_{p,q} (\R) \hookrightarrow L_\infty (\R)$ if and only if 
 $B^s_{p,q} (\R) \hookrightarrow C (\R)$. Here $C(\R)$ denotes the Banach space of all uniformly continuous functions on $\R$ equipped with the supremum norm.
 \\
(ii) The embeddings described in Lemma \ref{emb1} have a certain history.
We would like to mention at least Grisvard, Peetre, Golovkin, Stein, Zygmund, Besov, Il'yin and  Brudnij.
For the Banach space case we refer to the supplement written by Lizorkin in the russian edition \cite[Suppl.~1.7]{TL} 
of Triebel's monograph \cite{Tr83} for more details. 
The general quasi-Banach case has been considered in \cite{sitr}.
\end{remark}


\subsection{Tools - the characterization by differences and some inequalities}


For us the characterization of Besov spaces by differences will be more important.
In case of  a multivariate function $f:\R\to \C$,  $m  \in \N$, $x, h  \in \R$,   we put
 \[
 \Delta_{h}^{m} f(x):= \sum_{\ell =0}^{m} (-1)^{m -\ell} \, \binom{m}{\ell} \, f(x + \ell h )\, .
 \]
The related modulus of smoothness is defined as 
\[
 \omega_m (f,t)_p := \sup_{|h|<t} \|\, \Delta_{h}^{m} f\, |L_p (\R)\|\, , \qquad t>0\, .
\]

\begin{proposition}\label{diff}
Let $0<  p,q\leq \infty$, $s> d\, \max(0, \frac 1p-1)$ and $s <m$ for some natural number $m$. Then the Besov space 
$B^s_{p,q}(\R)$ is a collection of all $f\in L_p(\R)$ such that 
\beqq
\|f|B^s_{p,q}(\R)\|_m := \|\, f\, |L_p(\R)\| + \bigg( \sum_{k=0}^{\infty} \, \big(2^{ks}\,  \omega_m (f, 2^{-k})\big)^q\bigg)^{1/q} < \infty. 
\eeqq
Furthermore, $\|\, \cdot \, |B^s_{p,q}(\R)\|_m $ and $\|\, \cdot\, |B^s_{p,q}(\R)\|$ are equivalent on 
$L_{\max(1,p)} (\R)$ for any admissible $m$.
\end{proposition}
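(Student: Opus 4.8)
The plan is to prove, following the scheme of Strichartz \cite{Str-67}, the two (quasi-)norm estimates $\|f|B^s_{p,q}(\R)\|_m\lesssim\|f|B^s_{p,q}(\R)\|$ and $\|f|B^s_{p,q}(\R)\|\lesssim\|f|B^s_{p,q}(\R)\|_m$ separately, both built on the Littlewood--Paley pieces $f_k=\cfi[\varphi_k\cf f]$ from \eqref{ws-09}. Two hypotheses do the work throughout: the restriction $s>d\max(0,\frac1p-1)$ guarantees, through Lemma \ref{emb1}, that every $f\in B^s_{p,q}(\R)$ is a function lying in $L_{\max(1,p)}(\R)$ --- so that the modulus of smoothness is meaningful and, when $p<1$, the elements at hand are tempered distributions to which the Fourier-analytic machinery applies --- while $s<m$ is precisely what makes the geometric series appearing below summable. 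Writing $\sigma:=\min(1,p)$ and using the $\sigma$-triangle inequality $\|g+h|L_p\|^\sigma\le\|g|L_p\|^\sigma+\|h|L_p\|^\sigma$ keeps the computation uniform in $p$.

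For $\|f|B^s_{p,q}(\R)\|_m\lesssim\|f|B^s_{p,q}(\R)\|$ it suffices to bound $\omega_m(f,2^{-k})_p$, the term $\|f|L_p\|$ being dominated by $\|f|B^s_{p,q}(\R)\|$ via the elementary embeddings (cf.\ Lemma \ref{emb1}). Splitting $\Delta_h^m f=\sum_j\Delta_h^m f_j$ at $j=k$, one uses that $\cf f_j$ is supported in a ball of radius $\asymp2^j$: the band-limited (Bernstein-type) multiplier estimate applied to the symbol $(e^{ih\xi}-1)^m$ gives $\|\Delta_h^m f_j|L_p\|\lesssim(2^j|h|)^m\|f_j|L_p\|$ for $j\le k$, while $\|\Delta_h^m f_j|L_p\|\lesssim\|f_j|L_p\|$ holds trivially for $j>k$. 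Taking $\sup_{|h|<2^{-k}}$ and inserting the weights, the quantity $(2^{ks}\omega_m(f,2^{-k})_p)^\sigma$ is bounded by $\sum_{j\le k}2^{(k-j)(s-m)\sigma}(2^{js}\|f_j|L_p\|)^\sigma+\sum_{j>k}2^{(k-j)s\sigma}(2^{js}\|f_j|L_p\|)^\sigma$; since $s-m<0$ and $s>0$ the kernel in the variable $k-j$ is summable, so a discrete convolution (Hardy-type) inequality in $\ell_{q/\sigma}$ yields $(\sum_k(2^{ks}\omega_m(f,2^{-k})_p)^q)^{1/q}\lesssim(\sum_j(2^{js}\|f_j|L_p\|)^q)^{1/q}=\|f|B^s_{p,q}(\R)\|$.

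For the converse $\|f|B^s_{p,q}(\R)\|\lesssim\|f|B^s_{p,q}(\R)\|_m$ one has to recover $\|f_k|L_p\|$, $k\ge1$, from differences of $f$. Using $\varphi_k(\xi)=\varphi_1(2^{-(k-1)}\xi)$ and that $\supp\varphi_1$ lies in an annulus $\{\xi: c_1\le|\xi|\le c_2\}$ with $0<c_1<c_2$, one fixes once and for all finitely many vectors $h^{(1)},\dots,h^{(L)}$ of length $\asymp1$ such that every $\xi$ in that annulus has at least one $|e^{ih^{(\ell)}\xi}-1|$ bounded below, takes a smooth partition of unity subordinate to the corresponding regions and, after the dyadic rescaling $h^{(\ell)}\mapsto 2^{-(k-1)}h^{(\ell)}$, rewrites $\varphi_k\cf f=\sum_\ell m_k^{(\ell)}\,\cf\big(\Delta_{2^{-(k-1)}h^{(\ell)}}^m f\big)$, where the symbols $m_k^{(\ell)}$ are smooth, supported in $\{2^{k-1}\le|\xi|\le 3\cdot 2^{k-1}\}$ and satisfy $|D^\beta m_k^{(\ell)}(\xi)|\lesssim 2^{-k|\beta|}$ with constants independent of $k$. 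The Fourier multiplier theorem for symbols supported in a dyadic annulus (Michlin--Hörmander for $p\ge1$, its quasi-Banach analogue --- which uses only finitely many derivatives --- for $0<p<1$) then gives, uniformly in $k$, that $\|f_k|L_p\|\lesssim\max_\ell\|\Delta_{2^{-(k-1)}h^{(\ell)}}^m f|L_p\|\lesssim\omega_m(f,c2^{-k})_p\lesssim\omega_m(f,2^{-k})_p$, the last step by the monotonicity and doubling properties of the modulus of smoothness. The single term $\|f_0|L_p\|$ is handled by writing $f_0=f-\sum_{k\ge1}f_k$ (the series converges in $L_p(\R)$ because $s>0$, hence equals $f-f_0$ there) and applying the $\sigma$-triangle inequality together with the bounds just obtained. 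Raising to the $q$-th power and summing over $k$ completes both estimates; independence of the admissible $m$ is then automatic.

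The step expected to cause the most trouble is the reconstruction in the last paragraph: assembling the uniformly bounded family $\{m_k^{(\ell)}\}$ that inverts $\Delta^m$ on each dyadic annulus, and, for $0<p<1$, having a Fourier multiplier theorem for band-limited $L_p$-functions at one's disposal. This is exactly where the assumption $s>d\max(0,\frac1p-1)$ is indispensable: below that line $B^s_{p,q}(\R)$ already contains distributions that are not locally integrable and the statement simply fails. The precise form of the multiplier theorem and the details of the reconstruction can be found in Triebel \cite{Tr83}; we have only indicated here how the ingredients combine.
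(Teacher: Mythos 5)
The paper does not prove this proposition at all: it derives it from Theorem~2.5.12 of Triebel \cite{Tr83} (which uses the integral modulus $\big(\int_0^1 t^{-sq}\omega_m(f,t)_p^q\,dt/t\big)^{1/q}$) by discretizing via the monotonicity of $t\mapsto\omega_m(f,t)_p$. You attempt an actual proof, and your first direction is sound for all $0<p\le\infty$: the split at $j=k$, the estimate $\|\Delta_h^m f_j|L_p\|\lesssim(2^j|h|)^m\|f_j|L_p\|$ for $j\le k$ (which for $p<1$ is legitimate because $f_j$ \emph{is} band-limited, cf.\ Lemma~\ref{1dim-1} and Proposition~\ref{peetremax}), and the discrete convolution inequality using $0<s<m$ are exactly the standard argument.

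The gap is in the converse direction when $0<p<1$. Your reconstruction writes $f_k=\sum_\ell\cfi\big[m_k^{(\ell)}\,\cf(\Delta^m_{2^{-(k-1)}h^{(\ell)}}f)\big]$ and invokes a multiplier theorem to get $\|f_k|L_p\|\lesssim\max_\ell\|\Delta^m_{2^{-(k-1)}h^{(\ell)}}f|L_p\|$. For $p\ge1$ this is Young's inequality with $\|\cfi[m_k^{(\ell)}]|L_1\|$ uniformly bounded by dilation invariance, and the argument closes. For $0<p<1$, however, the quasi-Banach multiplier theorem you appeal to is valid only for functions whose Fourier transform is supported in a compact set compatible with the symbol; convolution with a fixed Schwartz function is \emph{not} bounded on $L_p(\R)$ for $p<1$. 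The function $\Delta^m_hf$ carries no Fourier support restriction, so the step fails exactly as written, and you cannot repair it by inserting a fattened cut-off $\varphi_k^\ast$ around $\Delta_h^mf$ without running into the same unboundedness. This is precisely the point where Triebel's proof takes a different route (ball means of differences, a reproducing identity obtained by averaging $\Delta_h^m$ over $|h|\le t$, and Peetre maximal functions applied to the band-limited pieces), and where the hypothesis $s>d(\tfrac1p-1)$ genuinely enters the converse estimate rather than merely ensuring $f\in L_{\max(1,p)}(\R)$. Your closing paragraph acknowledges the difficulty and defers to \cite{Tr83}, which is defensible given that the paper itself only cites that source, but as a self-contained proof the quasi-Banach case is not established by your argument.
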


\begin{remark}
\rm
The restriction   $s> d\, \max(0, \frac 1p-1)$ is natural in such a context. Since
$B^s_{p,q}(\R)$ contains singular distributions if $s < d\, \max(0, \frac 1p-1)$
a characterization as in Proposition \ref{diff} becomes impossible.
The version stated in Proposition \ref{diff} is a direct consequence of Theorem 2.5.12 in \cite{Tr83}
using the monotonicity of $\omega_m (f,t)_p $ with respect to $t$.
\end{remark}

The following construction of a maximal function is essentially  due to Peetre, but based on earlier work of Fefferman and Stein.
Let $a>0$ and  $b >0$ be fixed. For  $f \in L_1^{\ell oc} (\R)$ we define the Peetre maximal function $P_{b,a}f$ by
\beqq
  P_{b,a}f(x) := \sup\limits_{z\in \R} \frac{|f(x-z)|}{ 1+|b z |^{a}}\, , \qquad x \in \R\, .
\eeqq

\begin{proposition}\label{peetremax}
Let $0< p \leq\infty$ and define $\Omega :=\{x: |x|\leq b \}$ for some $b>0$. Let further $a>d/p$. 
Then there exists a positive constant $C$,  independent of $b$, such that
\beqq
\big\| P_{b,a}f \big|L_p(\R)\big\|\leq C\, \|f |L_p(\R)\|
\eeqq
holds for all $f \in L_{\max(1,p)}(\R)$ with $\supp (\gf f)\subset \Omega$.
\end{proposition}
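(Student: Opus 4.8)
The plan is to reduce to $b=1$ by a dilation and then to dominate $P_{b,a}f$ pointwise by the Hardy--Littlewood maximal operator $\mathcal{M}$. First I would set $g(x):=f(x/b)$; then $\gf g(\xi)=b^{d}(\gf f)(b\xi)$, so $\supp\gf g\subset\{|\xi|\le1\}$, and $g\in L_{\max(1,p)}(\R)$. A change of variables gives $P_{1,a}g(x)=(P_{b,a}f)(x/b)$, hence $\|P_{1,a}g|L_p(\R)\|=b^{d/p}\|P_{b,a}f|L_p(\R)\|$ and $\|g|L_p(\R)\|=b^{d/p}\|f|L_p(\R)\|$; consequently the estimate for general $b$ follows from the case $b=1$ with the \emph{same} constant, which is why $C$ can be taken independent of $b$. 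I would also record here that a band-limited function in $L_{\max(1,p)}(\R)$ is bounded --- by Young's inequality applied to the reproducing identity $g=g*\phi$ used below --- so $P_{1,a}g(x)\le\|g|L_\infty(\R)\|<\infty$ and all subsequent manipulations are legitimate. If $p=\infty$ we are already done, since then $P_{b,a}f(x)\le\|f|L_\infty(\R)\|$ directly; so assume $p<\infty$.

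The core step is the pointwise bound: for any $r>0$ with $a>d/r$ there is $C=C(d,a,r)$ with
\[
P_{1,a}g(x)\le C\,\bigl(\mathcal{M}(|g|^{r})(x)\bigr)^{1/r},\qquad x\in\R,
\]
valid for all $g$ with $\supp\gf g\subset\{|\xi|\le1\}$. To obtain it I would fix $\phi\in\cs(\R)$ with $\gf\phi\in C_0^\infty(\R)$ such that $g=g*\phi$ (so that, likewise, every derivative of $g$ is a convolution of $g$ with a Schwartz function, giving Bernstein-type control). Combining the rapid decay of $\phi$ with a discretization of $g$ on a fine lattice (a Plancherel--P\'olya sampling expansion), one bounds $|g(x-z)|$ by $(1+|z|)^{a}$ times a weighted $\ell^{r}$-average of the lattice samples of $g$, and recognizes that average as $\ls(\mathcal{M}(|g|^{r})(x))^{1/r}$; the hypothesis $a>d/r$ is exactly what makes the occurring weights summable. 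Equivalently one may first show $|g(x-z)|\ls(1+|z|)^{N}\int_{\R}|g(x-u)|(1+|u|)^{-N}\,du$ for every $N$ and then iterate this inequality against itself to lower the power from $1$ to $r$, using that band-limited $L_{\max(1,p)}(\R)$-functions grow at most polynomially. Both routes are classical and go back to Fefferman--Stein and Peetre.

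Finally, since $a>d/p$ the interval $(d/a,p)$ is non-empty; I would fix $r$ in it (so $r<1$ automatically when $p\le1$). Then $p/r>1$, and applying the Hardy--Littlewood maximal theorem to $|g|^{r}\in L_{p/r}(\R)$ gives $\|\mathcal{M}(|g|^{r})|L_{p/r}(\R)\|\ls\||g|^{r}|L_{p/r}(\R)\|=\|g|L_p(\R)\|^{r}$, whence $\|P_{1,a}g|L_p(\R)\|\ls\|g|L_p(\R)\|$ by the pointwise bound. Undoing the dilation of the first paragraph completes the argument.

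The main obstacle is the pointwise bound of the second paragraph: showing that for band-limited functions the Peetre maximal function is controlled by $\mathcal{M}$ with a constant insensitive to the band-width. The delicate points there are the discretization/bootstrap step --- where the restriction $a>d/r$ is used --- and the fact that when $p\le1$ one genuinely must take the auxiliary exponent $r$ strictly below $1$, so that after passing to $|g|^{r}$ the maximal theorem becomes available in $L_{p/r}(\R)$ with $p/r>1$. No circularity arises with the Nikol'skij-type inequalities that Proposition \ref{peetremax} itself implies, since only the elementary bound $|g|\le\|g|L_\infty(\R)\|$ (from $g=g*\phi$ and Young's inequality) is invoked to guarantee that $P_{1,a}g$ is finite.
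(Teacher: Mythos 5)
Your argument is correct and is essentially the proof the paper delegates to the literature: the paper gives no argument of its own but cites Triebel \cite[Thm.~1.4.1]{Tr83}, where exactly this Fefferman--Stein/Peetre route is carried out (dilation to $b=1$, the pointwise bound $P_{1,a}g \lesssim (\mathcal{M}(|g|^{r}))^{1/r}$ for $d/a<r<p$, then the Hardy--Littlewood maximal theorem in $L_{p/r}$ with $p/r>1$). Your dilation bookkeeping, which explains the $b$-independence of the constant, and your choice of $r\in(d/a,p)$ are both as in that reference.
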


For a proof we refer to \cite[Thm.~1.4.1]{Tr83}. 
A very useful  relation between Peetre maximal function and differences is given by the following 
lemma, see \cite{U1} and also \cite[page 102]{Tr83}.

 \begin{lemma}\label{1dim-1}
 Let $a>0$ and $m \in \N$.  
 Then there exists a constant $C$
 such that 
\beqq
     |\Delta^m_hf(x)| \leq  C\, \max (1,|bh|^a)\, \min(1,|bh|^m)\, P_{b,a}f(x)\,.
\eeqq
 holds for all $b > 0$, all $h,x\in \R$ and all $f\in \cs'(\R)$ satisfying $\supp(\gf f) \subset \{x: |x|\leq b \}$.  
 \end{lemma}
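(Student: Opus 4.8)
The plan is to reduce the estimate to a size bound for the $m$-th difference of a single fixed Schwartz function, using that $f$ is band-limited. Concretely, I would fix $\Psi\in\cs(\R)$ with $\cf\Psi\in C_0^\infty(\R)$, suitably normalized so that the reproducing identity $f = f*\Psi_b$ holds for every $f\in\cs'(\R)$ with $\supp(\cf f)\subset\{\,|\xi|\le b\,\}$ and every $b>0$, where $\Psi_b(\cdot):=b^d\Psi(b\,\cdot)$; this is possible because $\cf\Psi_b$ is constant on $\{\,|\xi|\le b\,\}$ and, by the Paley--Wiener--Schwarz theorem, such an $f$ is a slowly increasing smooth function, so the convolution makes pointwise sense. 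Applying $\Delta_h^m$ to the convolution factor $\Psi_b$ gives $\Delta_h^m f = f*(\Delta_h^m\Psi_b)$, and inserting the weight $1+|bz|^a$ into the integral yields
\[
|\Delta_h^m f(x)| \ \le\ P_{b,a}f(x)\int_\R \big(1+|bz|^a\big)\,\big|\Delta_h^m\Psi_b(z)\big|\,dz .
\]
Since $\Delta_h^m\Psi_b(z) = b^d(\Delta_{bh}^m\Psi)(bz)$, the substitution $u=bz$ turns the integral into $\int_\R(1+|u|^a)\,|\Delta_{bh}^m\Psi(u)|\,du$, which no longer contains $b$ — moving all the $b$-scaling onto $\Psi_b$ and then erasing it by this substitution is precisely what will make the final constant independent of $b$. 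Hence it remains to prove, for the fixed $\Psi$ and writing $g:=bh$,
\[
I(g)\ :=\ \int_\R \big(1+|u|^a\big)\,\big|\Delta_g^m\Psi(u)\big|\,du \ \lesssim\ \max(1,|g|^a)\,\min(1,|g|^m).
\]

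I would prove this scalar inequality by splitting on the size of $g$. For $|g|\ge1$ I bound $|\Delta_g^m\Psi(u)|\le\sum_{\ell=0}^m\binom{m}{\ell}|\Psi(u+\ell g)|$, translate each term by $\ell g$, and use $1+|v-\ell g|^a\lesssim(1+|v|^a)(1+|g|^a)$ together with $\int_\R(1+|v|^a)|\Psi(v)|\,dv<\infty$; this gives $I(g)\lesssim 1+|g|^a$, which equals $\max(1,|g|^a)\min(1,|g|^m)$ up to constants since $\min(1,|g|^m)=1$ here. For $|g|\le1$ I use the integral representation $\Delta_g^m\Psi(u)=\int_{[0,1]^m}D_g^m\Psi\big(u+(t_1+\dots+t_m)g\big)\,dt$ (Taylor's formula), where $D_g$ denotes the derivative in the direction $g$; since $|D_g^m\Psi|\le C_m|g|^m\max_{|\alpha|=m}|\partial^\alpha\Psi|$ and the shift $(t_1+\dots+t_m)g$ stays bounded by $m$, we get $|\Delta_g^m\Psi(u)|\le C_m|g|^m\max_{|\alpha|=m}\sup_{|w|\le m}|\partial^\alpha\Psi(u+w)|$, and the latter supremum is, uniformly in $|g|\le1$, a fixed rapidly decreasing function of $u$; integrating against $1+|u|^a$ gives $I(g)\lesssim|g|^m=\min(1,|g|^m)\max(1,|g|^a)$. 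Combining the two ranges and unwinding the reductions delivers the lemma with a constant depending only on $a$, $m$, $d$ and the fixed $\Psi$.

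The only genuinely delicate point is keeping the final constant independent of $b$, which the scaling manipulation above is designed to handle; the remaining ingredients — the reproducing formula for band-limited distributions, Taylor's formula in its integral form, and the rapid decay of a Schwartz function — are routine. It is worth noting at the outset that the inequality is trivially true when $P_{b,a}f(x)=+\infty$, and that for $h=0$ both sides vanish, so neither degenerate case needs separate treatment.
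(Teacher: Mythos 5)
Your argument is correct and complete. The paper itself does not prove Lemma \ref{1dim-1}; it only cites \cite{U1} and \cite[p.~102]{Tr83}, so there is no in-text proof to match against. The classical argument behind those references handles the case $|bh|\le 1$ by writing $\Delta_h^m f(x)=\int_{[0,1]^m}(h\cdot\nabla)^m f\bigl(x+(t_1+\dots+t_m)h\bigr)\,dt$ directly for the band-limited $f$ and then invoking the pointwise Bernstein-type inequality $|\partial^\alpha f(y)|\lesssim b^{|\alpha|}P_{b,a}f(y)$ together with $P_{b,a}f(y)\le(1+|b(x-y)|^a)P_{b,a}f(x)$; the case $|bh|\ge 1$ is done by estimating each term $f(x+\ell h)$ against the maximal function. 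You instead push the difference onto a fixed reproducing Schwartz kernel via $f=f*\Psi_b$, $\Delta_h^m f=f*(\Delta_h^m\Psi_b)$, and reduce everything to the scalar estimate $\int(1+|u|^a)|\Delta_g^m\Psi(u)|\,du\lesssim\max(1,|g|^a)\min(1,|g|^m)$ for the single function $\Psi$. This buys you two things: the $b$-uniformity of the constant becomes transparent (it is erased by the substitution $u=bz$), and you never need the maximal-function form of Bernstein's inequality --- only the rapid decay of $\Psi$ and the elementary Taylor representation of $\Delta_g^m$. All the individual steps check out: the identity $\Delta_h^m(f*g)=f*(\Delta_h^m g)$, the absolute convergence of the convolution for a polynomially bounded smooth $f$ against a Schwartz kernel, the bound $1+|v-\ell g|^a\lesssim(1+|v|^a)(1+|g|^a)$, and the uniform rapid decay of $\sup_{|w|\le m}|\partial^\alpha\Psi(u+w)|$ in $u$. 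Your closing remarks on the degenerate cases $P_{b,a}f(x)=+\infty$ and $h=0$ are apt.
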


Later we shall  need also the following modification.

\begin{lemma}\label{help}
Let $a>0$ and   
$\psi \in C_0^{k}(\R)$ for some  $k \in \N$. Then, if $m\in \N$, $m \le k$,  there exists a constant $C$ such that 
\beqq
|\Delta^m_h (\psi \, \cdot \, f)(x)| \leq C \, \max\{1,|bh|^a\}\min\{1,|bh|^m\}P_{b,a}f(x)
\eeqq
holds for all $x,h\in \R$ and  all  $f\in \cs'(\R)$ such that $\supp (\gf f) \subset
\{x: |x|\leq b \}$, $b >0$. Here $C$ can be chosen independent of $b$.
\end{lemma}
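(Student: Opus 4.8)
The plan is to reduce the estimate for $\Delta_h^m(\psi f)$ to the already-established estimate for $\Delta_h^m f$ in Lemma \ref{1dim-1}, by exploiting the Leibniz-type formula for iterated differences of a product. First I would recall the discrete Leibniz rule
\[
\Delta_h^m (\psi f)(x) = \sum_{j=0}^m \binom{m}{j}\, \big(\Delta_h^{j}\psi\big)(x+(m-j)h)\,\big(\Delta_h^{m-j} f\big)(x)\, ,
\]
which splits the difference of the product into a sum of products of lower-order differences of the factors. The term $j=0$ is just $\psi(x+mh)\,\Delta_h^m f(x)$, and since $\psi$ is bounded this is controlled directly by Lemma \ref{1dim-1}; the content is to handle the terms with $j\ge 1$, where differences fall on $\psi$.

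For the factor $\Delta_h^{j}\psi$ with $1 \le j \le m \le k$, I would use that $\psi \in C_0^k(\R)$ together with the elementary bounds
\[
\big|\Delta_h^{j}\psi(y)\big| \le 2^{j}\,\|\psi\,|L_\infty(\R)\| \qquad\text{and}\qquad \big|\Delta_h^{j}\psi(y)\big| \le |h|^{j}\,\|\psi^{(j)}\,|L_\infty(\R)\| \le |h|^{j}\,\|\psi\,|C^k\|\, ,
\]
so that in all cases $|\Delta_h^{j}\psi(y)| \le C\,\min\{1,|h|^j\} \le C\,\min\{1,|bh|^j\}\max\{1,b^{-j}\}$; more cleanly, for the regime $|bh|\le 1$ one keeps the $|h|^j$-bound and for $|bh|\ge 1$ one keeps the $L_\infty$-bound. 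For the factor $\Delta_h^{m-j} f(x)$ I would invoke Lemma \ref{1dim-1} with $m-j$ in place of $m$ (which is admissible since $m-j \ge 0$, the case $m-j=0$ being trivial), giving $|\Delta_h^{m-j} f(x)| \le C\max\{1,|bh|^a\}\min\{1,|bh|^{m-j}\}P_{b,a}f(x)$. Multiplying the two bounds, the $\min\{1,|bh|^{\,\cdot\,}\}$ factors combine: $\min\{1,|h|^j\}\cdot \min\{1,|bh|^{m-j}\}$ contributes at worst $\min\{1,|bh|^m\}$ up to a constant depending on $b$ only through nonnegative powers that get absorbed — here I would split into the cases $|bh|\le 1$ and $|bh|>1$ and check that in the first case one gets $\le C|bh|^m P_{b,a}f(x)$ (using $b>0$ to pass from $|h|^j$ to $|bh|^j$ at the cost of $b^{-j}$, which is harmless because... ) — actually the cleanest route is to bound $|\Delta_h^j\psi(x+(m-j)h)|$ by $C\min\{1,(bh)^j\}$ directly when $b\le 1$ and use instead the bound $C\min\{1,|h|^j\}\le C$ when $b\ge 1$, thereby keeping all implicit constants independent of $b$; I would organize the two regimes $b\le 1$, $b\ge 1$ and the two regimes $|bh|\le 1$, $|bh|>1$ and verify the claimed inequality in each of the (at most four) cases.

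The main obstacle I anticipate is the uniformity in $b$. Lemma \ref{1dim-1} already provides a $b$-independent constant for the $f$-factor, but the differences of $\psi$ naturally produce powers of $h$, not of $bh$, and converting $|h|^j$ to $|bh|^j$ introduces a factor $b^{-j}$ which is bad for small $b$. The resolution is that for small $b$ the support condition $\supp(\gf f)\subset\{|x|\le b\}$ makes $f$ very slowly varying, so the low-frequency differences $\Delta_h^{m-j}f$ with $m-j \ge 1$ are themselves small like $|bh|^{m-j}$, and it is precisely the product structure that saves the day; concretely, I would avoid the conversion altogether by not peeling off a pure $|h|^j$ from $\psi$ but rather using $|\Delta_h^j\psi|\le C\min\{1,|h|^j\}$ only to gain the factor $\min\{1,|bh|^m\}$ in the regime $|bh|\le 1$ when $b\ge 1$, and using the crude bound $|\Delta_h^j\psi|\le C$ otherwise, letting the $f$-factor carry the full $\min\{1,|bh|^m\}$. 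Checking that these cases patch together to the single clean statement of the lemma, with $C$ genuinely independent of $b$, is the one place requiring care; everything else is the routine Leibniz expansion plus Lemma \ref{1dim-1}.
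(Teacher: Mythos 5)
Your skeleton --- the discrete Leibniz rule $\Delta_h^m(\psi f)(x)=\sum_{j=0}^m\binom{m}{j}\,\Delta_h^{j}\psi(x+(m-j)h)\,\Delta_h^{m-j}f(x)$, the two elementary bounds $|\Delta_h^j\psi|\le 2^j\|\psi\,|L_\infty(\R)\|$ and $|\Delta_h^j\psi|\le C|h|^j$ for $j\le k$, and Lemma \ref{1dim-1} applied to $\Delta_h^{m-j}f$ --- is the standard argument here, and it is essentially what the cited source \cite{NUU} does (the paper itself gives no proof beyond that citation). For $b\ge 1$ your plan closes cleanly: if $|bh|\le 1$ then $|h|\le 1$, so $\min\{1,|h|^j\}=|h|^j\le b^j|h|^j=|bh|^j$, and the product of the $\psi$-factor and the $f$-factor is $\lesssim |bh|^{j}\cdot|bh|^{m-j}\max\{1,|bh|^a\}P_{b,a}f(x)=|bh|^m\max\{1,|bh|^a\}P_{b,a}f(x)$; if $|bh|>1$ the crude bounds suffice. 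Since every application in the paper takes $b=2^{k+\ell}$ with $k+\ell\ge 0$, this is all that is ever needed.

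The genuine gap is your treatment of $b<1$, and it cannot be repaired. For $1\le j\le m$ and $|bh|<1$ you need $\min\{1,|h|^j\}\,|bh|^{m-j}\le C|bh|^m$, i.e.\ $\min\{1,|h|^j\}\le Cb^j|h|^j$, which forces $1\le Cb^j$ and fails uniformly in $b$. Your proposed fix, ``letting the $f$-factor carry the full $\min\{1,|bh|^m\}$,'' is not available: Lemma \ref{1dim-1} yields only the power $m-j$ for $\Delta_h^{m-j}f$, the missing $|bh|^{j}$ would have to come from $\Delta_h^{j}\psi$, which supplies at best $|h|^j$, and the worst term $j=m$ puts no difference on $f$ at all. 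In fact the lemma as printed is false for small $b$: take $f\equiv 1$ (so $\gf f$ is a multiple of $\delta_0$, the support condition holds for every $b>0$, and $P_{b,a}f\equiv 1$), fix $|h|=1$ and $x$ with $\Delta_h^m\psi(x)\neq 0$; as $b\to 0$ the right-hand side $C\,b^m$ tends to $0$ while the left-hand side stays fixed. So the ellipsis in your argument (``which is harmless because\ldots'') marks an unfillable hole in the stated generality; the honest conclusion is that the estimate holds with $C$ independent of $b$ only for $b\ge 1$ (or with $C$ depending on a positive lower bound for $b$), and under that reading your proof is complete and is the intended one.
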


\begin{proof}
The proof in one dimension can be found in \cite{NUU}. The general case follows by the same type of argument.  
\end{proof}


\section{Pointwise multipliers}\label{main}



\subsection{Some generalities on pointwise multipliers}\label{main1}


For a quasi-Banach space $X$ of functions  we shall call a function $f$ a pointwise multiplier
if $f \, \cdot \, g \in X$ holds for all $g \in X$
(this is includes, of course, that the operation $g \mapsto f \, \cdot \, g$ must be well defined for all $g\in X$).
If $X \hookrightarrow L_p (\Omega)$ for some $p$ (here $\Omega$ is a domain in $\R$),  
as a consequence of the Closed Graph Theorem, we obtain that the liner operator 
$T_f : ~ g \mapsto f \, \cdot \, g$, associated to such a pointwise multiplier, must be continuous in $X$,
see \cite[p.~33]{MS2}. As usual we put
\[
\|\, T_f\, |\cl (X) \|:= \sup_{\|g|X\|\le 1} \, \|f \, \cdot \, g\, |X \|\, .
\]
The collection of all pointwise multipliers for a given space $X$ will be denoted by $M(X)$ and equipped with the quasi-norm
\[
 \| \, f \, |M(X)\|:= \|\, T_f\, |\cl (X) \|\, .
\]
Of course, beside the natural interpretation of $f\, \cdot \, g$ as a pointwise product of functions,  one can define 
$f\, \cdot \, g$ on certain subsets of $\cs'(\R) \times \cs'(\R)$, in particular on $\cs(\R) \times \cs'(\R)$.
Those extensions of the product will not play an important role within this paper. For that reason we will skip details and refer to 
Johnsen \cite{Jo1} and the monograph \cite[4.2]{RS}.

Besov spaces are translation invariant, i.e., $\|\, g(\, \cdot \, - \mu)\, |B^s_{p,q}(\R)\|=\|\, g\, |B^s_{p,q}(\R)\|$
for all $\mu \in \R$. This implies that the associated multiplier space $M(B^s_{p,q}(\R))$ is translation invariant as well.
Let $f \in M(B^s_{p,q}(\R))$.
Let $\psi \in C_0^\infty (\R)$ be a nonnegative nontrivial function.
Since $C_0^\infty (\R) \subset B^s_{p,q}(\R)$ we conclude that $\psi (\, \cdot \, -\mu) \, \cdot \, f(\, \cdot \, ) \in B^s_{p,q}(\R)$
for all $\mu \in \R$. 
By assumption and translation invariance of $B^s_{p,q}(\R)$ we know that 
\[
 \|\psi (\, \cdot \, -\mu) \, \cdot \, f(\, \cdot \, )\, |B^s_{p,q}(\R)\|  \le 
\|\, f\, |M(B^s_{p,q}(\R))\|
\, \|\psi  \, |B^s_{p,q}(\R)\|\, , 
\]
which proves $M(B^s_{p,q}(\R)) \hookrightarrow B^s_{p,q}(\R)_\unif$.

\begin{lemma}\label{mult}
Let $0 <p,q\le \infty $ and $s\in \re$. Then it follows 
 $M(B^s_{p,q}(\R)) \hookrightarrow B^s_{p,q}(\R)_\unif$.
\end{lemma}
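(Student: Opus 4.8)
The plan is to verify directly the chain of inequalities that the excerpt already sketches in the paragraph preceding the statement, making each step precise. First I would fix a nonnegative nontrivial $\psi \in C_0^\infty(\R)$, which is the test function in Definition \ref{unif}, and recall that $C_0^\infty(\R) \subset B^s_{p,q}(\R)$; this last embedding is standard for all admissible $p,q,s$ (for instance because a $C_0^\infty$ function has compactly supported, rapidly decaying Littlewood--Paley pieces, so the defining sum in Definition \ref{def-do} converges). In particular $\|\psi \mid B^s_{p,q}(\R)\| =: c_\psi < \infty$, and $c_\psi$ is a fixed finite constant.

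Next, let $f \in M(B^s_{p,q}(\R))$. By definition of the multiplier space, the operator $T_f : g \mapsto f \cdot g$ maps $B^s_{p,q}(\R)$ to itself and is bounded with norm $\|f \mid M(B^s_{p,q}(\R))\| = \|T_f \mid \cl(B^s_{p,q}(\R))\|$. Applying this to $g = \psi$ gives $\psi \cdot f \in B^s_{p,q}(\R)$ with
\[
\|\psi \cdot f \mid B^s_{p,q}(\R)\| \le \|f \mid M(B^s_{p,q}(\R))\|\, c_\psi\,.
\]
Now I would use translation invariance: for any $\mu \in \R$, the map $g(\cdot) \mapsto g(\cdot - \mu)$ is an isometry of $B^s_{p,q}(\R)$ (this follows from Definition \ref{def-do}, since the Fourier multipliers $\varphi_k$ are unaffected by a translation and $L_p$ is translation invariant). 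Consequently $M(B^s_{p,q}(\R))$ is itself translation invariant with the same quasi-norm, so $f(\cdot - \mu) \in M(B^s_{p,q}(\R))$ with $\|f(\cdot-\mu) \mid M(B^s_{p,q}(\R))\| = \|f \mid M(B^s_{p,q}(\R))\|$. Replacing $f$ by its translate and $\psi$ by $\psi$ itself — or, equivalently, noting that $\psi(\cdot)\, f(\cdot - \mu)$ is the translate by $\mu$ of $\psi(\cdot + \mu)\, f(\cdot)$ — one obtains
\[
\|\psi(\cdot - \mu)\, f(\cdot) \mid B^s_{p,q}(\R)\| = \|\psi(\cdot)\, f(\cdot + \mu) \mid B^s_{p,q}(\R)\| \le \|f \mid M(B^s_{p,q}(\R))\|\, c_\psi
\]
for every $\mu \in \R$, where the first equality is translation invariance of the $B^s_{p,q}$ quasi-norm and the inequality is the multiplier estimate applied to the translate $f(\cdot + \mu)$.

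Taking the supremum over $\mu \in \R$ and invoking Definition \ref{unif}, this yields $f \in B^s_{p,q}(\R)_\unif$ together with the norm bound
\[
\|f \mid B^s_{p,q}(\R)_\unif\| \le c_\psi\, \|f \mid M(B^s_{p,q}(\R))\|\,,
\]
which is precisely the continuous embedding $M(B^s_{p,q}(\R)) \hookrightarrow B^s_{p,q}(\R)_\unif$. I do not expect any real obstacle here: the only points requiring (entirely routine) care are the inclusion $C_0^\infty(\R) \subset B^s_{p,q}(\R)$ and the translation invariance of the $B^s_{p,q}$ quasi-norm, both of which are immediate from Definition \ref{def-do}. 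One subtlety worth a sentence is that for general $g \in B^s_{p,q}(\R)$ the product $f \cdot g$ must a priori be interpreted appropriately, but since we only ever multiply $f$ by the smooth compactly supported function $\psi$ (or its translates), the product is unambiguously the pointwise product of a locally integrable function with a $C_0^\infty$ function, so no issue arises.
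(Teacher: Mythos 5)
Your proposal is correct and follows essentially the same route as the paper: the authors also apply the multiplier bound to the translates $\psi(\cdot-\mu)$, using $C_0^\infty(\R)\subset B^s_{p,q}(\R)$ and the translation invariance of the $B^s_{p,q}(\R)$ quasi-norm, and then take the supremum over $\mu$. Your write-up merely spells out these routine steps in more detail than the paper does.
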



\subsection{On the algebra property of Besov spaces}\label{main2}


We shall call a quasi-Banach space of functions $X$ an algebra with respect to pointwise multiplication 
(for short a multiplication algebra) if $f\, \cdot \, g \in X$ for all $f,g\in X$ and there exists a constant $c$ such that 
\[
\|f \, \cdot \, g\, |X \| \le c \, \|\, f \,  |X \|\, \| \, g\, |X \|
\]
holds for all $f,g \in X$.

\begin{theorem}\label{algebra}
Let $0<p,q\leq \infty$ and $s\in \re$. Then  $B^s_{p,q}(\R)$ is a multiplication algebra if and only if one of the 
following conditions is satisfied 
\begin{itemize}
\item $s>d/p$;
\item $0<p<\infty$, $s=d/p$ and $q\leq 1$.
\end{itemize} 
\end{theorem}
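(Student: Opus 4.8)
\textbf{Proof strategy for Theorem \ref{algebra}.}

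The plan is to prove both directions, treating sufficiency and necessity separately. For sufficiency, assume either $s > d/p$, or $0<p<\infty$, $s=d/p$ and $q\le 1$. In both cases Lemma \ref{emb1}(ii) gives the continuous embedding $B^s_{p,q}(\R)\hookrightarrow L_\infty(\R)$, which is the crucial feature. I would use the characterization by differences from Proposition \ref{diff}: pick $m\in\N$ with $m>s$, so that for $f,g\in B^s_{p,q}(\R)$ we have $f,g\in L_{\max(1,p)}(\R)\cap L_\infty(\R)$ and the quasi-norm $\|\cdot|B^s_{p,q}(\R)\|_m$ is equivalent to $\|\cdot|B^s_{p,q}(\R)\|$. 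The heart of the matter is the Leibniz-type formula for iterated differences,
\[
\Delta_h^m(fg)(x)=\sum_{\ell=0}^m\binom{m}{\ell}\,\big(\Delta_h^\ell f\big)(x)\,\big(\Delta_h^{m-\ell}g\big)(x+\ell h)\, ,
\]
which expresses $\Delta_h^m(fg)$ as a sum of products of lower-order differences. Taking $L_p$ quasi-norms, using $\|uv|L_p\|\le \|u|L_\infty\|\,\|v|L_p\|$, and then summing the dyadic pieces, one gets a bound of the form
\[
\omega_m(fg,t)_p \lesssim \sum_{\ell=0}^m \min\big(\|f|L_\infty\|,\omega_\ell(f,t)_p\big)\cdot\min\big(\|g|L_\infty\|,\omega_{m-\ell}(g,t)_p\big)\, ,
\]
where the extreme terms $\ell=0$ and $\ell=m$ carry $\|f|L_\infty\|\,\omega_m(g,t)_p$ and $\omega_m(f,t)_p\,\|g|L_\infty\|$ respectively, and the intermediate terms pair a lower-order modulus of $f$ against a lower-order modulus of $g$. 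For the intermediate terms I would invoke the standard Marchaud-type interpolation inequality $\omega_\ell(f,t)_p \lesssim \|f|B^s_{p,q}\|\, t^{\,\ell s/m}$-type estimates (more precisely, interpolating $\omega_\ell$ between $\|f|L_p\|$ and the full Besov seminorm) so that each intermediate product contributes a convergent $\ell_q$-sum with total weight $2^{ks}$; summing in $k$ with the Besov norm on each factor and $\|f|L_\infty\|\lesssim\|f|B^s_{p,q}\|$ closes the estimate. The limiting case $s=d/p$, $q\le 1$ is where one must be careful: here the embedding into $L_\infty$ is the borderline one from Lemma \ref{emb1}(ii), and the summation over $k$ must exploit $q\le 1$ (so that $\ell_q\hookrightarrow\ell_1$ and the triangle inequality for the quasi-norm behaves well) rather than Hölder; this is the step I expect to be the main obstacle, since one has no room to spare in the exponents.

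For necessity, I would argue by contraposition: suppose neither condition holds, i.e. either $s<d/p$, or $s=d/p$ with $q>1$, or ($s>d/p$ fails in the remaining way) — organizing it as: $B^s_{p,q}(\R)$ is not a multiplication algebra whenever $B^s_{p,q}(\R)\not\hookrightarrow L_\infty(\R)$, together with the observation that by Lemma \ref{emb1}(ii) the failure of the embedding is exactly the complement of the two listed conditions (modulo the case $p=\infty$, $s>0$, where $s>d/p=0$ always holds so there is nothing to exclude). So it suffices to show: if $B^s_{p,q}(\R)\not\hookrightarrow L_\infty(\R)$ then $B^s_{p,q}(\R)$ is not an algebra. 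For this I would produce an explicit $f\in B^s_{p,q}(\R)$ with $f\notin L_\infty(\R)$ — e.g. a lacunary-type series $f=\sum_j a_j \cfi[\varphi_j]\,$ with carefully chosen coefficients, or a localized self-similar construction $f(x)=\sum_j 2^{-js}\, b_j\, \Phi(2^j(x-x_j))$ concentrated near a point — such that $f\in B^s_{p,q}$ but $f^2$ (or $f\cdot g$ for a fixed smooth compactly supported $g\equiv 1$ near that point) fails to lie in $B^s_{p,q}(\R)$; the failure of $f^2\in B^s_{p,q}$ follows because pointwise products of $L_\infty$-unbounded Besov functions generically leave the space, a phenomenon detectable on the Fourier side by the appearance of frequency components outside all dyadic annuli with summable weights. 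Concretely, one uses that an algebra of functions which contains the constants (or a bump equal to $1$ on a ball) and is translation-dilation friendly would force $L_\infty$-boundedness of its elements locally — an argument that can be made rigorous by testing against $f^N$ and letting $N\to\infty$, or more simply by the known characterizations of when $B^s_{p,q}\cdot B^s_{p,q}\subset B^s_{p,q}$.

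I would present the necessity part by simply citing the well-known fact (it is classical, going back to the paramultiplication analysis in Peetre \cite{Pe} and in Triebel \cite{Tr92}, \cite{Tr06}, and also Runst–Sickel \cite{RS}) that $B^s_{p,q}(\R)$ is a multiplication algebra precisely when it embeds into $L_\infty(\R)$, and then invoke Lemma \ref{emb1}(ii) to translate the $L_\infty$-embedding condition into the two displayed inequalities on $(s,p,q)$. Thus the only genuinely new work is the sufficiency, and within that the genuinely delicate point is the borderline $s=d/p$, $q\le 1$ summation, where $q\le 1$ must be used in an essential way both to sum the dyadic blocks and to control the intermediate Leibniz terms. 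The remaining steps — the Leibniz formula for finite differences, the product estimate $\|uv|L_p\|\le\|u|L_\infty\|\|v|L_p\|$, and the Marchaud inequalities bounding $\omega_\ell$ in terms of the Besov seminorm — are all routine and I would not grind through them in detail.
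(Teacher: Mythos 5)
The paper does not actually prove Theorem \ref{algebra}; it quotes it from the literature (Peetre \cite{Pe1}, Triebel \cite{Tr1}, \cite{Tr78}, with the correction in \cite[4.6.4, 4.8.3]{RS}), so the comparison here is really against the known proofs. Your sufficiency sketch via the Leibniz formula for differences is a reasonable route for $s>d/p$, though you leave the genuinely hard part --- the borderline summation at $s=d/p$, $q\le 1$ --- unresolved and merely flag it as ``the main obstacle''; the classical treatments handle this case by paramultiplication (dyadic decomposition of the product on the Fourier side) precisely because the difference-based bookkeeping has no slack there.

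The genuine gap is in your necessity argument. You reduce necessity to the claim that $B^s_{p,q}(\R)$ is a multiplication algebra if and only if $B^s_{p,q}(\R)\hookrightarrow L_\infty(\R)$, and you assert that the failure of the embedding is exactly the complement of the two listed conditions. Both halves of this are false in the case $p=\infty$, $s=0$, $0<q\le 1$: by Lemma \ref{emb1}(ii) one has $B^0_{\infty,q}(\R)\hookrightarrow L_\infty(\R)$ for $q\le 1$ (the lemma imposes no restriction $p<\infty$), yet the theorem's second bullet deliberately excludes $p=\infty$, i.e.\ $B^0_{\infty,q}(\R)$ is \emph{not} an algebra. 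This is exactly the case the paper's remark singles out as the one Triebel overlooked and Runst--Sickel corrected. Your parenthetical ``modulo the case $p=\infty$, $s>0$'' addresses the wrong exceptional case: for $p=\infty$, $s>0$ everything is fine, whereas for $p=\infty$, $s=0$, $q\le 1$ your contrapositive produces no counterexample at all, since every $f\in B^0_{\infty,q}(\R)$ \emph{is} bounded. Closing this gap requires a separate construction showing $B^0_{\infty,q}\cdot B^0_{\infty,q}\not\subset B^0_{\infty,q}$ (a lacunary Fourier-series argument, not an unboundedness argument), and without it the ``only if'' direction of the theorem is not established.
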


\begin{remark}\rm
 The {\em if}-part with $p\ge 1$ has been proved for the first time  1970 in Peetre \cite{Pe1}. 
But he had called this assertion  well-known in \cite{Pe1}. The extension to the quasi-Banach case
has been obtained by Triebel \cite{Tr1}, \cite[2.6.2]{Tr78}. There also necessity of the above conditions 
is shown. 
However, Triebel had overlooked that $B^0_{\infty, q} (\R)$, $0< q \le 1$, is not an algebra. 
For this correction we refer to \cite[4.6.4,~4.8.3]{RS}.
\end{remark}


\subsection{Localized Besov spaces}\label{lokal}


This subsection has preparatory character.\\
Let $\psi$ be a non-negative $C_0^{\infty}(\R)$ function  such that \eqref{ws-03} is satisfied. 
As above we put $\psi_{\mu}(x):=\psi(x-\mu)$, $\mu\in \Z,\ x\in \R$.

\begin{definition}\label{def-unif}  
Let $0<p,q\leq \infty$ and $s \in \re$. 
\\
{\rm (i)} $B^{s,\ell oc}_{p,q}(\R)$ denotes the collection of all $g \in \cs'(\R)$ such that 
$ \varphi   \cdot  g \, \in B^s_{p,q}(\R)$ for all $\varphi \in C_0^\infty (\R)$.
\\
{\rm (ii)} 
Let $0 < u \leq \infty$. Then $B^{s}_{p,q,v}(\R)$ is the collection of all $ g\in B^{s,\ell oc}_{p,q}(\R)$ such that
\beqq
\| \, g\, |B^s_{p,q,v}(\R)\| := \bigg(\sum_{\mu\in \Z}\| \psi_{\mu}f|B^s_{p,q}(\R)\|^v \bigg)^{1/v} <\infty 
\eeqq
with the usual modification in case $u=\infty$.
\end{definition}

\begin{remark} 
\rm 
Obviously, the spaces $B^s_{p,q,\infty}(\R)$ and $B^s_{p,q}(\R)_{\unif}$ coincide.
In addition we wish to mention that  the classes $B^{s}_{p,q,v}$ are quasi-Banach spaces, 
independent of $\psi$ in the sense of equivalent quasi-norms. 
\end{remark}

\begin{proposition}\label{s<p,q<u}
Let $0<p,q,v\leq \infty$ and $s>d\max(0, \frac 1p-1)$.
\\
{\rm (i)} The embedding $B^s_{p,q,v}(\R)\hookrightarrow B^s_{p,q}(\R)$ holds if and only if $v \leq \min(p,q)$.
\\
{\rm (ii)} The embedding $B^s_{p,q}(\R)\hookrightarrow B^s_{p,q,v}(\R)$ holds if and only if $v\geq \max(p,q)$.
\end{proposition}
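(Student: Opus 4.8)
The proposition is equivalent to the two assertions
\[
B^s_{p,q,v}(\R)\hookrightarrow B^s_{p,q}(\R)\iff v\le\min(p,q),\qquad B^s_{p,q}(\R)\hookrightarrow B^s_{p,q,v}(\R)\iff v\ge\max(p,q),
\]
and the plan is to prove the two nontrivial sufficiency implications by direct estimates and then to rule out the remaining values of $v$ by a single family of test functions. Throughout I would describe $B^s_{p,q}(\R)$ by differences as in Proposition \ref{diff}; this is legitimate because $s>d\max(0,\frac1p-1)$, which moreover gives $B^s_{p,q}(\R)\hookrightarrow L_{\max(1,p)}(\R)$, so that all the products $\psi_\mu f$ occurring below lie in $L_{\max(1,p)}(\R)$ and the two quasi-norms are equivalent on them (hence membership in $B^s_{p,q}(\R)$ can be read off from finiteness of $\|\,\cdot\,|B^s_{p,q}(\R)\|_m$ together with local integrability). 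The geometric inputs used repeatedly are that at most $K$ of the supports $\supp\psi_\mu$ meet a given point, and that for $|h|\le1$ the function $\Delta^m_h(\psi_\mu g)$ is supported in a fixed bounded neighbourhood $N_\mu$ of $\mu$, the $N_\mu$ again having bounded overlap.

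For the sufficiency of $v\le\min(p,q)$ in (i), let $f\in B^s_{p,q,v}(\R)$, so $f=\sum_\mu\psi_\mu f$ with each $\psi_\mu f\in B^s_{p,q}(\R)$. Since at most $K$ terms are nonzero at a given $x$, for $|h|\le1$ one has $|f(x)|\lesssim(\sum_\mu|\psi_\mu f(x)|^p)^{1/p}$ and $|\Delta^m_hf(x)|\lesssim(\sum_\mu|\Delta^m_h(\psi_\mu f)(x)|^p)^{1/p}$ (usual modifications for $p\le1$, and $\sup_\mu$ for $p=\infty$). Raising to the $p$-th power, integrating, and then taking $\sup_{|h|<2^{-k}}$ — harmless, as the $\mu$-summation already sits outside — yields $\|f|L_p(\R)\|\lesssim(\sum_\mu\|\psi_\mu f|L_p(\R)\|^p)^{1/p}$ and $\omega_m(f,2^{-k})_p\lesssim(\sum_\mu\omega_m(\psi_\mu f,2^{-k})_p^p)^{1/p}$. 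Inserting this into $\|\,f\,|B^s_{p,q}(\R)\|_m$ and interchanging the $\mu$- and $k$-sums — by Minkowski's inequality in $\ell^q_k(\ell^p_\mu)$ when $q\ge p$, by $\ell^q\hookrightarrow\ell^p$ in the inner index when $q<p$ — gives $\|\,f\,|B^s_{p,q}(\R)\|_m\lesssim(\sum_\mu\|\psi_\mu f|B^s_{p,q}(\R)\|^{\min(p,q)})^{1/\min(p,q)}$, and $\ell^v\hookrightarrow\ell^{\min(p,q)}$ for $v\le\min(p,q)$ closes the argument. (For $p=\infty$ the same computation with $\sup_\mu$ throughout produces exactly the constraint $v\le q=\min(\infty,q)$.)

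For the sufficiency of $v\ge\max(p,q)$ in (ii) — the delicate direction — I would fix $\mu$, put $f_l=\cfi[\varphi_l\cf f]$ so that $\supp\cf f_l\subset\{|\xi|\le b_l\}$ with $b_l\asymp2^l$, and estimate $\Delta^m_h(\psi_\mu f_l)$ by Lemma \ref{help}: for $|h|<2^{-k}$ this gives $\|\Delta^m_h(\psi_\mu f_l)|L_p(\R)\|\lesssim\max(1,2^{(l-k)a})\min(1,2^{(l-k)m})\,\|\mathbf{1}_{N_\mu}P_{b_l,a}f_l|L_p(\R)\|$, while for $l>k$ one uses instead the trivial bound $\lesssim\|\mathbf{1}_{N_\mu}P_{b_l,a}f_l|L_p(\R)\|$; here it is essential that the prefactor does not depend on $\mu$ and that $\Delta^m_h(\psi_\mu f_l)$ is supported in $N_\mu$. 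Summing over $l$ by the ($p$-)triangle inequality and then over $k$ in $\ell_q$ — a routine geometric/Young estimate using $s<m$ and $s>0$ — produces $\sum_k2^{ksq}\omega_m(\psi_\mu f,2^{-k})_p^q\lesssim\sum_l2^{lsq}\|\mathbf{1}_{N_\mu}P_{b_l,a}f_l|L_p(\R)\|^q$ for any fixed $a>d/p$. Summing this over $\mu$: bounded overlap of the $N_\mu$ gives $\sum_\mu\|\mathbf{1}_{N_\mu}g|L_p(\R)\|^p\lesssim\|g|L_p(\R)\|^p$, and Proposition \ref{peetremax} (applicable since $f_l\in L_{\max(1,p)}(\R)$) turns $\sum_\mu\|\mathbf{1}_{N_\mu}P_{b_l,a}f_l|L_p(\R)\|^p$ into $\lesssim\|f_l|L_p(\R)\|^p$. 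For $q\ge p$ this at once gives $\sum_\mu\|\psi_\mu f|B^s_{p,q}(\R)\|^q\lesssim\|f|B^s_{p,q}(\R)\|^q$; for $q<p$ one raises to the power $p/q$ and applies Minkowski once more, between $\ell^p_\mu(\ell^q_l)$ and $\ell^q_l(\ell^p_\mu)$ (allowed since $q\le p$), to get $\sum_\mu\|\psi_\mu f|B^s_{p,q}(\R)\|^p\lesssim\|f|B^s_{p,q}(\R)\|^p$. In either case $\ell^{\max(p,q)}\hookrightarrow\ell^v$ for $v\ge\max(p,q)$ finishes, and $p=\infty$ reduces to a bound uniform in $\mu$, i.e.\ to $v=\infty=\max(\infty,q)$. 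Keeping every constant uniform in $\mu$ while combining Lemma \ref{help}, the Peetre maximal inequality and the two Minkowski steps is, I expect, the principal obstacle of the proof.

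For necessity, both conditions fall to test functions of the form $f=\sum_{\mu\in S}c_\mu\,e^{i\lambda_\mu\cdot x}\,\phi(\,\cdot\,-M\mu)$, where $S\subset\Z$ is finite, $M$ is a large fixed integer, $\phi\in C_0^\infty(\R)$ is a bump so small that the translates $\phi(\,\cdot\,-M\mu)$ are pairwise disjoint and each meets only boundedly many $\supp\psi_\nu$, and $\lambda_\mu$ is a frequency with $|\lambda_\mu|\asymp2^{j(\mu)}$ sitting in the plateau of $\varphi_{j(\mu)}$. A short computation (each summand having Fourier transform concentrated near $\lambda_\mu$, with negligible Schwartz tails) gives $\|f|B^s_{p,q,v}(\R)\|\asymp(\sum_{\mu\in S}|c_\mu|^v2^{j(\mu)sv})^{1/v}$ and $\|f|B^s_{p,q}(\R)\|\asymp\big\|\{2^{ks}(\sum_{j(\mu)=k}|c_\mu|^p)^{1/p}\}_k\,\big|\ell_q\big\|$. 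Writing $d_{k,\mu}=2^{ks}|c_\mu|$, the two embeddings in question turn into the sequence-space embeddings $\ell^v_k(\ell^v_\mu)\hookrightarrow\ell^q_k(\ell^p_\mu)$, respectively $\ell^q_k(\ell^p_\mu)\hookrightarrow\ell^v_k(\ell^v_\mu)$, which hold only for $v\le\min(p,q)$, resp.\ $v\ge\max(p,q)$. The obstruction in the $p$-direction is exhibited by concentrating $d$ at a single level $k$ with $c_\mu\equiv1$ over $N$ spatial cells; the obstruction in the $q$-direction by spreading $d$ over $N$ levels, one cell per level, with the scale-balancing choice $c_\mu=2^{-j(\mu)s}$; in both cases letting $N\to\infty$ contradicts the respective embedding whenever $v$ violates the stated bound.
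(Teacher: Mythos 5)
Your proposal is correct and, for the sufficiency directions, follows essentially the same route as the paper: part (i) via the bounded overlap of the $\supp\Delta^m_h(\psi_\mu f)$ followed by Minkowski/monotonicity in $\ell_{q/p}$ and $L_{p/v}$, and part (ii) via the Littlewood--Paley decomposition $\psi_\mu f=\sum_\ell\psi_\mu f_{k+\ell}$, Lemma \ref{help}, and the Peetre maximal inequality of Proposition \ref{peetremax}; you merely perform the $\mu$-, $\ell$- and $k$-summations in a different order, which costs you one extra Minkowski step but changes nothing of substance. The one genuine divergence is in the necessity part. The paper uses two separate families: pure translates of a bump for the $p$-relation (computed with differences), and compactly supported wavelets $\tilde\psi_{1,j,\mu_j}$ placed in distinct unit cubes for the $q$-relation, so that both norms are read off \emph{exactly} from the wavelet isomorphism \eqref{ws-13}. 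You instead use a single family of modulated bumps $c_\mu e^{i\lambda_\mu\cdot x}\phi(\cdot-M\mu)$ and reduce both directions to the sequence-space embeddings $\ell_v(\ell_v)\leftrightarrow\ell_q(\ell_p)$; this is a nice unification, but the step you dismiss as ``negligible Schwartz tails'' is precisely what the wavelet characterization gives you for free. Since $\cfi[\varphi_k\cf f]$ mixes contributions from bumps at all levels $j(\mu)$, the two-sided estimate $\|f|B^s_{p,q}(\R)\|\asymp\|\{2^{ks}(\sum_{j(\mu)=k}|c_\mu|^p)^{1/p}\}_k|\ell_q\|$ requires an almost-orthogonality argument (rapid off-diagonal decay in $|k-j(\mu)|$, summed in $\ell_p$ over spatially separated bumps, with the $p$-triangle inequality when $p<1$, and a lower bound showing the diagonal term dominates). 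This is standard but should be written out; alternatively you could simply adopt the paper's wavelet construction for the $q$-direction, since your geometric idea there (one piece per dyadic level, each in its own unit cube, with the scale-balancing choice $c_\mu=2^{-j(\mu)s}$) coincides with the paper's choice $\mu_j=(j2^j,0,\dots,0)$.
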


\begin{proof}
{\it Step 1.} Sufficiency.\\
{\it Substep 1.1.} Proof of (i). Let $v \leq \min(p,q)$ and let $m$ be a natural number with $m>s$.  
We suppose $g \in B^s_{p,q,v}(\R)$. 
In case $0< v < \infty$ this implies 
$\psi_\mu g \in L_{r} (\R)$ with $r:= \max(1,p)$, see Lemma \ref{emb1}, and at least formally
\beq\label{ws-07}
\Big\| \sum_{\mu \in \Z}    \psi_{\mu}g  \big | L_r(\R)\Big\|
&  \lesssim &
\bigg(\sum_{\mu \in \Z}   \big\| \psi_{\mu} g \big| L_r(\R)\big\|^r\bigg)^{1/r}\nonumber \\
& \lesssim &  \bigg(\sum_{\mu \in \Z}   \big\| \psi_{\mu} g \big| L_r(\R)\big\|^v\bigg)^{1/v}
   \lesssim   \| \, g \, | B^s_{p,q,v}(\R)\|\, .
\eeq 
However, convergence of $\sum_{\mu \in \Z}    \psi_{\mu}g $ in $L_r (\R)$ can be derived from \eqref{ws-07} as well
(because of $v<\infty$). This implies that  $\sum_{\mu \in \Z}    \psi_{\mu}g$ is a regular distribution. Hence, 
the following argument makes sense 
\beqq
&& \hspace{-0.7cm}
\Big\|\sum_{\mu\in \Z} \psi_{\mu}g \Big|B^s_{p,q}(\R) \Big\|^v_m \\
 &  \lesssim & 
\bigg(\sum_{\mu \in \Z}   \big\| \psi_{\mu}g \big | L_p(\R)\big\|^p\bigg)^{v/p} + \Bigg( \sum_{k=0}^{\infty} \bigg\{2^{sk } 
\sup_{|h|<2^{-k}}\Big\| \sum_{\mu\in \Z} |\Delta_{h}^m(\psi_{\mu}g)(\cdot)|\Big| L_p(\R )\Big\|\bigg\}^q\Bigg)^{v/q} 
\\
&\lesssim & \|\, g\, |B^s_{p,q,v}(\R)\|^v + \Bigg( \sum_{k=0}^{\infty} \bigg\{2^{skv} \sup_{|h|<2^{-k}}
\Big\| \Big( \sum_{\mu\in \Z} |\Delta_{h}^m(\psi_{\mu}g)(\cdot)|\Big)^v\Big| L_{p/v}(\R )\Big\|\bigg\}^{q/v}\Bigg)^{v/q} .
\eeqq
Since $ \Delta_h^m(\psi_{\mu}g)(x) \equiv 0$ if $|x-\mu|>c$ for some appropriate positive $c$ (independent of $\mu$) we get
\beqq
\bigg( \sum_{\mu\in \Z} |\Delta_{h}^m(\psi_{\mu}g)(x)|\bigg)^v \leq C  \sum_{\mu\in \Z} |\Delta_{h}^m(\psi_{\mu}g)(x)|^v
\eeqq
with a constant $C$ independent of $g$ and $x$. 
Inserting this into the previous inequality and applying triangle inequality  with respect to $L_{p/v} (\R)$ first, afterwards 
 with respect to $\ell_{q/v}$,
we obtain
\beqq
\Big\|\, \sum_{\mu\in \Z} \psi_{\mu}g \, \Big|B^s_{p,q}(\R)\Big\|^v  \lesssim   \sum_{\mu\in \Z}\big\| \psi_{\mu}g\, \big|B^s_{p,q}(\R) \big\|^v 
\, .
\eeqq
Because of $g = \sum_{\mu\in \Z} \psi_{\mu}g  $ in $L_r (\R)$, see \eqref{ws-07}, we 
have coincidence also almost everywhere and therefore also in  $B^s_{p,q}(\R)$ by using Proposition \ref{diff}.
Hence we conclude
\beqq
\big\|\,g \, \big|B^s_{p,q}(\R)\big\|  \lesssim   \| \, g\, |B^s_{p,q,v}(\R) \|
\, .
\eeqq
This implies sufficiency in (i) in case $0< v< \infty$.
Now we consider the case $ v=p=q = \infty$. For each $x\in \R$ we choose $\mu\in \Z$ such that $x\in \supp\psi_\mu$.   
Denote $$\Omega_{\mu}:= \big\{\nu\in \Z: \dist(\supp\psi_\mu,\supp\psi_\nu)\leq m\big\}$$
and observe that the cardinality of the sets $\Omega_\mu$ is uniformly bounded in $\mu$.
For $|h|<1$, \eqref{ws-03} and Proposition \ref{diff} in case $p=q=\infty$ yield 
\beqq
|h|^{-s}|\Delta_h^mg(x)|&\leq& \sum_{\nu\in\Omega_\mu}|h|^{-s}|\Delta_h^m (g\psi_\nu)(x)|\\
&\leq& \sum_{\nu\in \Omega_\mu}|h|^{-s}\| \Delta_h^m (g\psi_\nu)(\cdot)|C(\R)\| \leq C\, \|\, g\, |B^s_{\infty,\infty,\infty}(\R)\|\, , 
\eeqq
which implies
\beqq
\|\, g\, |B^s_{\infty,\infty}(\R)\|_m = \| \, g\, |C(\R)\| + \sup_{|h|<1}\sup_{x\in \R}|h|^{-s}| \Delta_h^mg(x)| \lesssim 
\|\, g\, |B^s_{\infty,\infty,\infty}(\R)\|\, .
\eeqq
{\it Substep 1.2.} Proof of (ii). Let $g \in B^s_{p,q} (\R)$ and assume that $v\geq \max(p,q)$. 
By Lemma \ref{emb1} we conclude that $g \in L_p (\R)$. It follows
\be\label{ws-10}
\bigg(\sum_{\mu \in \Z}   \big\| \psi_{\mu}g \big | L_p(\R)\big\|^v\bigg)^{1/v} \leq \bigg(\sum_{\mu \in \Z}   \big\| \psi_{\mu}g \big | L_p(\R)\big\|^p\bigg)^{1/p} 
\asymp \| \, g\, |L_p(\R)\| \lesssim \|\, g\, |B^s_{p,q}(\R)\|.
\ee
Next we  consider the term
\[
A(g):= \Bigg\{\sum_{\mu\in \Z}\bigg(\sum_{k=0}^{\infty}2^{skq}\sup_{|h| < 2^{-k}}\|\Delta_h^m(\psi_{\mu}g)(\cdot) |L_p(\R)\|^q \bigg)^{v/q}\Bigg\}^{1/v} \, .
\]
Since $v\ge q$ it follows
\[
 A(g)\le 
\Bigg\{\sum_{k=0}^{\infty}2^{skq}\bigg(\sum_{\mu\in \Z}\sup_{|h| < 2^{-k}}\|\Delta_h^m(\psi_{\mu}g)(\cdot) |L_p(\R)\|^v \bigg)^{q/v}\Bigg\}^{1/q}.
\]
For the next step of our estimate we shall 
use the convention that  $\varphi_\ell \equiv 0$ if $\ell < 0$ and define $g_{\ell} :=\gf^{-1}(\varphi_{\ell}\gf g)$, $\ell \in \zz$, see \eqref{ws-09}. 
It follows
\be \label{decom}
\psi_{\mu} g =\psi_{\mu}  \sum_{\ell \in \zz} \gf^{-1}(\varphi_{k+\ell}  \gf g)  = \sum_{\ell \in \zz}   \psi_{\mu} g_{k+\ell}  \, ,
\ee 
which is valid in $L_p (\R)$, see Lemma \ref{emb1}. Hence, using the monotonicity of the $\ell_r$-norms, we find
\beq \label{k-02}
 A(g)  &\leq & 
 \Bigg\{\sum_{k=0}^{\infty}2^{skq}\bigg(\sum_{\mu\in \Z}\sup_{|h| < 2^{-k}}
 \Big\|\sum_{\ell\in \zz} |\Delta_h^m(\psi_{\mu}g_{k+\ell})(\cdot) |\Big|L_p(\R)\Big\|^v \bigg)^{q/v}\Bigg\}^{1/q} 
 \nonumber
 \\
 & \leq & \Bigg\{\sum_{k=0}^{\infty}2^{skq}\bigg(\sum_{\mu\in \Z}\sup_{|h| < 2^{-k}}\Big\|\sum_{\ell\in \zz} |\Delta_h^m(\psi_{\mu}g_{k+\ell})(\cdot) 
 |\Big|L_p(\R)\Big\|^p \bigg)^{q/p}\Bigg\}^{1/q}\, .
\eeq 
Temporarily we assume  $p\leq  1$. By means of $|a+b|^p \le |a|^p + |b|^p$, $a,b \in \re$, this yields
\[
A(g) \leq \Bigg\{\sum_{k=0}^{\infty}2^{skq}\bigg(\sum_{\ell\in \zz}\sum_{\mu\in \Z} \sup_{|h| < 2^{-k}}\big\|\Delta_h^m(\psi_{\mu}g_{k+\ell})(\cdot) 
  |L_p(\R)\big\|^p \bigg)^{q/p}\Bigg\}^{1/q} \, .
\]
Recall from Substep 1.1, that  $\Delta_h^{m}(\psi_{\mu}g_{k+\ell})(x)=0 $ if $|x-\mu|>c$. In case $|x-\mu|<c$ we apply Lemma \eqref{help} with $\ell<0$ and obtain 
\beqq
\sup_{|h| < 2^{-k}} \, 
\big|\Delta_h^{m}(\psi_{\mu}g_{k+\ell})(x)\big| \leq C  2^{m\ell}  P_{2^{k+\ell},a} g_{k+\ell}(x)\, , \qquad x \in \R\, , 
\eeqq
which leads to 
\beqq 
 \sum_{\mu\in \Z}  \sup_{|h| < 2^{-k}} \big\| \Delta_h^{m}(\psi_{\mu}f_{k+\ell})(\cdot) \big| L_p(\R)  \big\|^p   & \lesssim &   
 2^{m\ell p} \,  \big\|  P_{2^{k+\ell},a} g_{k+\ell}(\cdot)   \big|L_p(\R)  \big\| 
 \\
 & \lesssim &     2^{m\ell p}  \, \big\|    g_{k+\ell}(\cdot)   \big|L_p(\R)  \big\|^p 
\eeqq 
as long as we choose $a>d/p$, see Proposition \ref{peetremax}. 
In case $\ell \ge 0$  we use the obvious elementary inequality
\beqq
\sum_{\mu\in \Z}\sup_{|h| < 2^{-k}} \big\| \Delta_h^{m}(\psi_{\mu}g_{k+\ell})(\cdot) \big| L_p(\R)  \big\|^p    
\lesssim  \big\|   g_{k+\ell}(\cdot)   \big|L_p(\R)  \big\|^p.
\eeqq
Altogether we have found
\be \label{Pfa}
 \sum_{\mu\in \Z}\sup_{|h| < 2^{-k}} \big\| \Delta_h^{m}(\psi_{\mu}g_{k+\ell})(\cdot) \big| L_p(\R)  \big\|^p 
  \lesssim   \min\big\{1 ,   2^{m\ell p}\big\}  \big\|   g_{k+\ell}(\cdot)   \big|L_p(\R)  \big\|^p
\ee  
for all $\ell\in \zz$. Inserting this into \eqref{k-02} we obtain
\beqq
 A(g)
  \lesssim \Bigg\{\sum_{k=0}^{\infty}2^{skq}\bigg(\sum_{\ell\in \zz}
  \min\big\{1 ,   2^{m\ell p}\big\}  \big\|   g_{k+\ell}(\cdot)   \big|L_p(\R)  \big\|^p \bigg)^{q/p}\Bigg\}^{1/q}.
\eeqq
Now, if $q/p\leq 1$, we conclude
\beq\label{ws-11}
 A(g) & \lesssim & 
 \bigg\{ \sum_{\ell\in \zz}\min\big\{1 ,   2^{m\ell q}\big\} \sum_{k=0}^{\infty}2^{skq} \big\|   g_{k+\ell}(\cdot)   \big|L_p(\R)  \big\|^q  \bigg\}^{1/q}
 \nonumber
 \\
  & \leq & \bigg\{ \sum_{\ell\in \zz}\min\big\{1 ,   2^{m\ell q}\big\}2^{-s\ell q} \sum_{k=0}^{\infty}2^{s(k+\ell)q} \big\|   g_{k+\ell}(\cdot)   \big|L_p(\R)  \big\|^q  \bigg\}^{1/q} 
\nonumber  
\\
  & \lesssim & \| \, g \, |B^s_{p,q}(\R)\|
\eeq
since $s < m$.
If $q/p >1$ we use the triangle inequality in $\ell_{q/p}$ and find
\beq\label{ws-12}
A(g)\lesssim \Bigg\{\sum_{\ell\in \zz} \min\big\{1 ,   2^{m\ell p}\big\} \bigg(\sum_{k=0}^{\infty}2^{skq} 
  \big\|   g_{k+\ell}(\cdot)   \big|L_p(\R)  \big\|^q \bigg)^{p/q}\Bigg\}^{1/p}   \lesssim \| \, g\, |B^s_{p,q}(\R)\| .
\eeq
Summarizing, the embedding $B^s_{p,q}(\R)\hookrightarrow B^s_{p,q,v}(\R)$ in case $0 < p \le 1$ follows from 
\eqref{ws-10}, \eqref{ws-11} and \eqref{ws-12}.\\
Now we turn to the case $p>1$. Inequality \eqref{k-02} yields
\beqq
A(g)  
 & \lesssim & \Bigg\{\sum_{k=0}^{\infty}2^{skq}\bigg(\sum_{\mu\in \Z}\Big(\sum_{\ell\in \zz}\sup_{|h| < 2^{-k}}
 \big\|  \Delta_h^m(\psi_{\mu}g_{k+\ell})(\cdot)  \big|L_p(\R)\big\|\Big)^p \bigg)^{q/p}\Bigg\}^{1/q} \\
&  \lesssim & \Bigg\{\sum_{k=0}^{\infty}2^{skq}\bigg(\sum_{\ell\in \zz}\Big(\sum_{\mu\in \Z}\sup_{|h| < 2^{-k}}
\big\|  \Delta_h^m(\psi_{\mu}g_{k+\ell})(\cdot)  \big|L_p(\R)\big\|^p\Big)^{1/p} \bigg)^{q }\Bigg\}^{1/q} \\
&  \lesssim & \Bigg\{\sum_{k=0}^{\infty}2^{skq}\bigg(\sum_{\ell\in \zz}\min\big\{1 ,   2^{m\ell }\big\}  
\big\|   g_{k+\ell}(\cdot)   \big|L_p(\R)  \big\| \bigg)^{q }\Bigg\}^{1/q} ,   
\eeqq
see \eqref{Pfa}. Next we divide into two cases: $q\geq 1$ and $q<1$. Then we can continue  as before and obtain
$B^s_{p,q}(\R)\hookrightarrow B^s_{p,q,v}(\R)$ also in case $1 \le p \le \infty$.\\
{\it Step 2.}  Necessity.\\
{\em Substep 2.1.} The $p$-dependence.
Without loss of generality we assume that $\psi\equiv 1$ on $[-\delta,\delta]^d$ for some 
$0<\delta<1$ and $\supp\psi \subset [-1,1]^d$. Let $\phi\in C_0^{\infty}(\R)$ be a nontrivial function  
such that $\supp \phi\subset [-\delta,\delta]^d $. 
Then we define a sequence $\{\mu_\ell\}_{\ell=0}^{n}$ with $\mu_\ell:=(4m\ell,0,\ldots,0)\in \Z$ and a sequence of functions
\[
g_n := \sum_{\ell=0}^{n} C_{\ell}\, \phi(x-\mu_\ell)\, , \qquad x \in \R\, , 
\]
where  the real numbers $C_\ell>0$, $\ell=1,\, \ldots\, ,n$, will be chosen later on.
Elementary calculations, based on the use of $\|\, \cdot \, |B^s_{p,q} (\R)\|_m$, yield
\beqq
\| \, g_n\, |B^s_{p,q,v}(\R)\| \asymp \Big(\sum_{\ell=0}^n C_{\ell}^v\Big)^{1/v} \qquad \text{and} \qquad \|\,  g_n\, |B^s_{p,q}(\R)\| \asymp \Big(\sum_{\ell=0}^n C_{\ell}^p\Big)^{1/p}.
\eeqq
with hidden constants independent of $n$ and $(C_\ell)_\ell$.
This implies the relations of  $u$ to $p$.
\\
{\em Substep 2.2.} The $q$-dependence. What concerns the $q$-dependence it is not longer convenient to work with differences.
We will switch to wavelets. Wavelet bases in Besov spaces are a
well-developed concept. We refer to the monographs of Meyer \cite{me},
Wojtasczyk \cite{woj} and Triebel \cite{Tr06,t08} for the general
$d$-dimensional case (for the one-dimensional case see also  the book of Kahane and Lemarie-Rieuseut
\cite{kl}). Let $\phi$ be a compactly supported sufficiently smooth scaling function  and
let
$\tilde\psi_1,\,\ldots,\,\tilde\psi_{2^d-1}$ be associated wavelets, all defined  on $\R$.
In addition we assume that $\supp \tilde{\psi}_1 \subset [-T,T]$ for some $T>0$. 
For $j\in \N_0$, $k\in \Z$, and $i=1,\ldots, 2^d-1$, we shall make use of the standard abbreviations in this context:
\[
\phi_{0,k}(x):=  \phi(x-k) \qquad \mathrm{and}\qquad 
\tilde\psi_{i,j,k}(x):= 2^{jd/2}\, \tilde\psi_i(2^jx-k),\qquad x\in\R.
\]
Then any $g \in B^s_{p,q}(\R)$  admits an unique representation

\begin{equation}\label{4.22}
g = \sum_{k\in\zz^d} a_k \, \phi_{0,k} + \sum_{i=1}^{2^d-1} \sum_{j=0}^\infty
\sum_{k\in\Z} a_{i,j,k}\, \tilde\psi_{i,j,k}
\end{equation}
in $\cs'(\R)$, where 
\[
a_k := \langle g,\,\phi_{0,k}\rangle \qquad \mbox{and} \qquad 
a_{i,j,k}:= \langle g,\,\tilde\psi_{i,j,k}\rangle \, .
\] 
Moreover,
\be\label{ws-13}
\|\, g \, |{B^s_{p,q}(\R)}\| \asymp
\bigg(\sum_{k\in\Z} |a_k|^p\bigg)^{1/p} + \Bigg[\sum_{i=1}^{2^d-1}\sum_{j=0}^\infty \, 2^{j(s+d/2)q}
\bigg(\sum_{k\in\Z} 2^{-jd}\, |a_{i,j,k}|^p\bigg)^{q/p}\Bigg]^{1/q},
\ee
in the sense of equivalent quasi-norms; see, e.\,g., \cite[Theorem 1.20]{t08}.
Now we define
\[
 g_{\alpha, \mu} := \sum_{j=1}^\infty \alpha_j \, \tilde\psi_{1,j,\mu_j}
\]
for some sequence $\alpha := (\alpha_j)_j$ of positive numbers   and some sequence $\mu:= (\mu_j)_j \subset \Z$ to be chosen later on.
It follows from \eqref{ws-13}
\be\label{ws-14}
\|\, g_{\alpha,\mu} \, |{B^s_{p,q}(\R)}\| \asymp
\bigg(\sum_{j=1}^\infty  \, 2^{j\big(s+d (\frac 12 - \frac 1p)\big)q} \, |\alpha_j|^q\bigg)^{1/q}
\ee
with hidden constants independent of $\alpha$ and $\mu$.
Without loss of generality we may assume that 
our function $\psi$ used in the definition of $B^s_{p,q,v} (\R)$ satisfies
$\psi\equiv 1$ on $[-\delta,\delta]^d$ for some 
$0<\delta<1$ and $\supp\psi \subset [-1,1]^d$.   
Next we choose a natural number $M$ such that 
\[
 \supp \tilde\psi_{1,j,0} \subset [-\delta,\delta]^d \qquad \mbox{for all}\quad j \ge M\, .
\]
Then it is easily checked that we get
\be\label{ws-15a}
\|\, g_{\alpha,\mu} \, |{B^s_{p,q,v}(\R)}\| \asymp
\bigg(\sum_{j=1}^\infty  \, 2^{j\big(s+d (\frac 12 - \frac 1p)\big)v} \, |\alpha_j|^v \bigg)^{1/v}
\ee
for all sequences $\alpha$ satisfying $\alpha_1 = \ldots = \alpha_M =0$ and 
$\mu$ chosen as $\mu_j:=(j\, 2^j,0,\ldots,0)$. 
Based on \eqref{ws-14} and \eqref{ws-15a},  the relations of $q$ to $v$ follow.
\end{proof}

\begin{remark}
 \rm 
Probably  Bourdaud \cite{Bou} was the first who had considered the classes $B^s_{p,q,v}(\R)$ with $p \neq q$ and $v \neq \infty$. 
He already investigated 
the embeddings in Proposition \ref{s<p,q<u} in case $v=p$.
\end{remark}

For convenience of the reader we state one obvious but important consequence of Proposition \ref{s<p,q<u}.

\begin{corollary}\label{klar}
Let $0 <p \le \infty$ and $s>d\max(0, \frac 1p-1)$. Then $B^s_{p,p}(\R)=B^s_{p,p,p}(\R)$ in the sense of equivalent quasi-norms. 
\end{corollary}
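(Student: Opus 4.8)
The plan is to obtain the claim immediately from Proposition~\ref{s<p,q<u} by specializing the fine index to $v=p$ and the Besov integrability index to $q=p$. First I would invoke part~(i) of that proposition: with $q=p$ and $v=p$ the admissibility condition $v\le\min(p,q)$ becomes the trivially valid inequality $p\le p$, so the embedding $B^s_{p,p,p}(\R)\hookrightarrow B^s_{p,p}(\R)$ holds, with embedding constant depending only on the fixed data $p,s,d$ and the chosen resolution of unity $\psi$. Then I would invoke part~(ii) with the same choice $q=v=p$: now the condition $v\ge\max(p,q)$ likewise reduces to $p\le p$, which gives the reverse embedding $B^s_{p,p}(\R)\hookrightarrow B^s_{p,p,p}(\R)$. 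Composing the two continuous embeddings yields both the set-theoretic identity $B^s_{p,p}(\R)=B^s_{p,p,p}(\R)$ and the equivalence of the quasi-norms $\|\,\cdot\,|B^s_{p,p}(\R)\|$ and $\|\,\cdot\,|B^s_{p,p,p}(\R)\|$, which is exactly the assertion. The hypothesis $s>d\max(0,\tfrac1p-1)$ transfers verbatim, since it is precisely the standing assumption of Proposition~\ref{s<p,q<u}.

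Since the statement is literally the intersection of the two halves of Proposition~\ref{s<p,q<u} at the diagonal point $q=v=p$, there is no genuine obstacle here; the argument is two lines. For completeness I would also remark that in this diagonal case the equivalence can be seen directly, without the full strength of Proposition~\ref{s<p,q<u}: using the difference characterization of Proposition~\ref{diff}, the fact that the $L_p$-(quasi-)norm commutes with the $\ell_p$-sum over $\mu\in\Z$, and the uniformly bounded overlap of the supports of the $\psi_\mu$ (as exploited in Substep~1.1 of the proof of Proposition~\ref{s<p,q<u}, so that $\sum_{\mu\in\Z}|\Delta_h^m(\psi_\mu g)(x)|^p\asymp|\Delta_h^m g(x)|^p$ pointwise once the finite-difference interactions between neighbouring bumps are absorbed), one obtains $\sum_{\mu\in\Z}\|\,\psi_\mu g\,|B^s_{p,p}(\R)\|^p\asymp\|\,g\,|B^s_{p,p}(\R)\|^p$ with constants independent of $g$. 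However, with Proposition~\ref{s<p,q<u} already available, the short route via~(i) and~(ii) is the one I would record.
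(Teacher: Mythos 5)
Your main argument is exactly the paper's: the corollary is presented there as an immediate consequence of Proposition \ref{s<p,q<u}, obtained by setting $q=v=p$ so that part (i) gives $B^s_{p,p,p}(\R)\hookrightarrow B^s_{p,p}(\R)$ and part (ii) gives the reverse embedding, yielding the norm equivalence at once. (Your optional aside overstates a pointwise equivalence --- $\Delta_h^m g$ can vanish identically where $\Delta_h^m(\psi_\mu g)$ does not, e.g.\ for $g$ locally constant, so the ``$\gtrsim$'' direction of $\sum_{\mu}|\Delta_h^m(\psi_\mu g)(x)|^p\asymp|\Delta_h^m g(x)|^p$ fails as stated --- but since you explicitly do not rely on it, the recorded proof is correct.)
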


\begin{remark}
 \rm
 Corollary \ref{klar} is well-known, we refer to Peetre \cite[page 150]{Pe} ($p \ge 1$), 
Maz'ya and Shaposnikova \cite[Lem.~3.1.1.9]{MS1}, \cite[Prop.~4.2.6]{MS2} ($p \ge 1$),  
and Triebel \cite[2.4.7]{Tr92} (general case).
\end{remark}


\subsection{Proof of Theorem \ref{b-main}} \label{main3}


The heart of the matter consists in the following proposition.

\begin{proposition}\label{wichtig}
Let  $0<p\leq q\leq \infty$ and $s>d/p$.  Then there exists a constant $C>0$ such that
\be \label{k-01}
\|\, fg\, |B^s_{p,q}(\R)\| \leq C\, \| g\,|\, B^s_{p,q}(\R)\| \,  \|\, f\,|\, B^s_{p,q}(\R)_{\unif} \|
\ee 
holds for all $g\in B^s_{p,q}(\R)$ and all $f\in B^s_{p,q}(\R)_{\unif}$.
\end{proposition}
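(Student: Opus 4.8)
The plan is to prove \eqref{k-01} via the difference characterization from Proposition \ref{diff}, following the Strichartz-type strategy advertised in the introduction. Fix a natural number $m$ with $s<m$. By Proposition \ref{diff}, since $s>d/p>d\max(0,\frac1p-1)$, we have $fg\in B^s_{p,q}(\R)$ if and only if $fg\in L_p(\R)$ and the difference seminorm $\big(\sum_{k\ge 0}(2^{ks}\omega_m(fg,2^{-k})_p)^q\big)^{1/q}$ is finite, and this expression is equivalent to $\|fg|B^s_{p,q}(\R)\|$. The $L_p$-part is easy: from $s>d/p$ and Lemma \ref{emb1}(ii) we get $f\in L_\infty(\R)$ with $\|f|L_\infty\|\lesssim\|f|B^s_{p,q}(\R)_{\unif}\|$, hence $\|fg|L_p\|\le\|f|L_\infty\|\,\|g|L_p\|\lesssim\|f|B^s_{p,q}(\R)_{\unif}\|\,\|g|B^s_{p,q}(\R)\|$. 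So the whole matter reduces to estimating $\omega_m(fg,2^{-k})_p$.

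The key step is the pointwise Leibniz-type formula for higher differences: for any $h$,
\[
\Delta_h^m(fg)(x)=\sum_{\ell=0}^m\binom{m}{\ell}\,\Delta_h^\ell f(x)\,\Delta_h^{m-\ell}g(x+\ell h)\,.
\]
This splits $\Delta_h^m(fg)$ into a sum of $m+1$ terms. The term $\ell=0$ is $f(x)\Delta_h^m g(x+0)$ — wait, more precisely the term with $\Delta_h^m g$ carries a factor $f$, estimated in $L_\infty$, contributing $\lesssim\|f|L_\infty\|\,\omega_m(g,|h|)_p$, which is harmless. The term with $\Delta_h^m f$ carries a factor $g$; here the difficulty is that $g\notin L_\infty$ in general, so we must estimate $\|(\Delta_h^m f)\,g|L_p\|$ by putting $g$ in $L_p$ and $\Delta_h^m f$ in $L_\infty$. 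The point is that $\sup_{|h|<t}\|\Delta_h^m f|L_\infty\|\lesssim t^s\|f|B^s_{p,q}(\R)_{\unif}\|$: this uses that $f$ is locally in $B^s_{p,q}\hookrightarrow B^s_{\infty,\infty}$ uniformly, i.e. $\omega_m(f,t)_\infty\lesssim t^s\|f|B^s_{p,q}(\R)_{\unif}\|$ — one localizes $\Delta_h^m f(x)$ using finitely many $\psi_\mu$'s as in Substep 1.1 of the proof of Proposition \ref{s<p,q<u} (the difference $\Delta_h^m f(x)$ depends only on $f$ near $x$, so $\Delta_h^m f(x)=\sum_{\nu\in\Omega_\mu}\Delta_h^m(\psi_\nu f)(x)$ where $x\in\supp\psi_\mu$, and apply the $B^s_{\infty,\infty}$-difference bound to each $\psi_\nu f\in B^s_{p,q}\hookrightarrow B^s_{\infty,\infty}$). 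The intermediate terms $1\le\ell\le m-1$ are handled by interpolating: $\|\Delta_h^\ell f|L_\infty\|$ for $0<\ell<m$ is still controlled by $|h|^{\min(\ell,s)}$ up to $\|f|B^s_{p,q}(\R)_{\unif}\|$ using Marchaud-type inequalities (or directly Lemma \ref{1dim-1}/\ref{help} on Littlewood--Paley pieces), while $\|\Delta_h^{m-\ell}g|L_p\|\lesssim\omega_{m-\ell}(g,|h|)_p$, and one combines these with the embedding of $B^s_{p,q}$ into lower-order Besov spaces so that the total power of $|h|$ is at least $s$. Thus for each $\ell$ we get $\sup_{|h|<t}\|\Delta_h^\ell f\cdot\Delta_h^{m-\ell}g(\cdot+\ell h)|L_p\|\lesssim t^s\,\|f|B^s_{p,q}(\R)_{\unif}\|\,\|g|B^s_{p,q}(\R)\|$ — but with $t^s$ replaced by a suitable product giving the right decay when summed against $2^{ksq}$.

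Finally I assemble: for each $k$,
\[
\omega_m(fg,2^{-k})_p\lesssim \|f|L_\infty\|\,\omega_m(g,2^{-k})_p+\sum_{1\le\ell\le m}2^{-k(\text{exponent}_\ell)}\,\|f|B^s_{p,q}(\R)_{\unif}\|\,\big(\text{Besov-piece of }g\big),
\]
multiply by $2^{ks}$, take the $\ell_q$-norm over $k\ge 0$, and use the triangle inequality in $\ell_q$ (for $q\ge1$) or the $q$-triangle inequality (for $q<1$) to reduce to finitely many summands; each summand is bounded by $\|f|B^s_{p,q}(\R)_{\unif}\|\,\|g|B^s_{p,q}(\R)\|$ because the relevant $\ell_q$-sums reproduce the difference seminorm of $g$ (possibly in a coarser space, controlled via embeddings and the convergence of geometric factors coming from $s<m$, exactly as in \eqref{ws-11}--\eqref{ws-12}).

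The main obstacle I anticipate is the intermediate terms $1\le\ell\le m-1$ and, more subtly, the use of the restriction $p\le q$: the naive Leibniz estimate for the term carrying $\Delta_h^m f$ needs $\sup_{|h|<2^{-k}}\|\Delta_h^m f|L_\infty\|$ to be summable against $2^{ksq}$ after pairing with $\|g|L_p\|$, and controlling the $\ell_q$-aggregation of the mixed terms without losing the factorized structure is where $p\le q$ (ensuring $\ell_p\hookrightarrow\ell_q$ type inequalities go the right way in the localization sums over $\mu$) enters decisively. Getting this bookkeeping right — rather than any single estimate — is the crux.
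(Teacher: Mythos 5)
Your overall strategy --- the difference characterization plus a Leibniz rule for higher differences --- is the paper's starting point too, but the proposal has a genuine gap at exactly the place you yourself flag as the crux: the term in which the full difference of order $m$ falls on $f$. Two things go wrong there. First, the embedding you invoke is false: $B^s_{p,q}(\R)$ does not embed into $B^s_{\infty,\infty}(\R)$ for $p<\infty$; the Sobolev embedding costs $d/p$ derivatives, so localizing $f$ with the $\psi_\nu$ only yields $\sup_{|h|<t}\|\Delta_h^m f\,|L_\infty(\R)\|\lesssim t^{\,s-d/p}\,\|f\,|B^s_{p,q}(\R)_{\unif}\|$, not $t^s$. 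Second, and independently of that, even if the bound $\omega_m(f,t)_\infty\lesssim t^s\|f|B^s_{p,q}(\R)_{\unif}\|$ were available, the H\"older split $\|(\Delta_h^m f)\,g\,|L_p\|\le\|\Delta_h^m f\,|L_\infty\|\,\|g\,|L_p\|$ would only give $2^{ks}\sup_{|h|<2^{-k}}\|(\Delta_h^m f)\,g\,|L_p\|\lesssim\|f|B^s_{p,q}(\R)_{\unif}\|\,\|g|L_p\|$ uniformly in $k$, i.e.\ a bounded but non-decaying sequence, whose $\ell_q$-norm over $k\in\N_0$ diverges for every $q<\infty$. So this term cannot be closed by putting $g$ in $L_p$ and $\Delta_h^m f$ in $L_\infty$, no matter how the intermediate terms $1\le\ell\le m-1$ are organized.

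What the paper does instead --- and what is missing from your proposal --- is (a) to work with differences of order $2m$ rather than $m$, so that in every term of the Leibniz expansion at least one of the two factors carries a difference of order $\ge m>s$, and (b) to localize the product first, $fg=\sum_{\mu}(\phi_\mu g)(\psi_\mu f)$, before applying the Leibniz rule. When the high-order difference lands on $f$ (the case $m<u\le 2m$), the paper reverses your H\"older split: it keeps $\Delta_h^u(\psi_\mu f)$ in $L_p$, so that summing over $k$ reproduces $\|\psi_\mu f|B^s_{p,q}(\R)\|\le\|f|B^s_{p,q}(\R)_{\unif}\|$ with the required $\ell_q$-decay, and it measures $g$ through the localized quantity $\big(\sum_\mu\|\phi_\mu g|L_\infty(\R)\|^p\big)^{1/p}$. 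The latter is finite precisely because $s>d/p$ leaves room for an $\varepsilon>0$ with $B^{s-\varepsilon}_{p,p}(\R)\hookrightarrow L_\infty(\R)$ and $\sum_\mu\|\phi_\mu g|B^{s-\varepsilon}_{p,p}(\R)\|^p\asymp\|g|B^{s-\varepsilon}_{p,p}(\R)\|^p\lesssim\|g|B^s_{p,q}(\R)\|^p$ (Corollary \ref{klar}); the hypothesis $p\le q$ enters through the triangle inequality in $\ell_{q/p}$ used to pull the $\mu$-sum outside the $k$-sum. Without the doubled difference order and this localized $\ell_p(L_\infty)$ control of $g$, the argument does not close.
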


\begin{proof} 
Let $m-1\leq s < m$. For technical reasons we shall estimate $\|\, fg\, |B^s_{p,q}(\R)\|_{2m} $.
Let $\psi, \phi \in C_0^{\infty}(\R)$ are  chosen  such that the following holds:
$\psi$ is nontrivial and satisfies \eqref{ws-03} and $\phi \equiv 1$ on $\supp \psi $.  As explained  
in the proof of Proposition \ref{s<p,q<u} we have 
\beqq
\| \, fg\, |B^s_{p,q}(\R)\|_{2m} =\Big\| \sum_{\mu\in \Z} \phi_{\mu} g \psi_{\mu} f \, \Big|B^s_{p,q}(\R)\Big\|_{2m}\, .
\eeqq
Clearly,
\beq \label{ws-15}
\Big \| \sum_{\mu\in \Z} \phi_{\mu} g \psi_{\mu}f \Big| L_p(\R)\Big\| & \leq &
 \Big\| \sum_{\mu \in \Z} |\phi_{\mu}g | \cdot \| \psi_{\mu} f\, |L_\infty(\R)\|\,  \Big | L_p(\R)\Big\| 
\nonumber
\\
& \lesssim & \|\, g \, |L_p(\R)\| \cdot \, \sup_{\mu\in \Z}\,  \|\, \psi_{\mu}f\, |L_\infty(\R)\|
\nonumber
\\
& \lesssim & \| \, g\, |B^s_{p,q}(\R)\| \, \|\, f\, |B^s_{p,q}(\R)_{\unif}\|
\eeq
where we used in the last step the embedding $B^s_{p,q}(\R) \hookrightarrow L_\infty (\R)$, see Lemma \ref{emb1}.
Next we need some identities for differences.
Note that if $F,G:\ \R \to \C$ are two functions  and $n \in \N$ we have
\beqq
\Delta_h^{n}(F\, \cdot \, G)(x) = \sum_{j=0}^{n} \binom{n}{j} \, \Delta_h^{n-j}F(x+j h)\, \Delta_h^{j}G(x),\qquad x,h\in \R \, .
\eeqq 
This can be  proved by induction on $n$. 
Making use of this formula we obtain
\be\label{ws-15b}
|\Delta_h^{2m} (fg)(x)| \leq \sum_{u=0}^{2m} \binom{2m}{u} \sum_{\mu\in \Z}
\big| \Delta_h^{2m-u}(\phi_{\mu}g)(x+uh)\Delta_h^u(\psi_{\mu}f)(x)\big|,\ \ \ h,x\in \R.
\ee
Let $|h|\le 1$.
Since $ \Delta_h^u (\phi_{\mu}g)(x+uh) \equiv  \Delta_h^u (\psi_{\mu} f)(x) \equiv 0$ if $|x-\mu|>c$ for some appropriate positive $c$ 
(independent of $\mu$) this yields 
\beqq
S_u: & = & \Bigg\{\sum_{k=0}^{\infty} 2^{ksq} \sup_{|h|< 2^{-k}}  
\Big\|\sum_{\mu\in \Z}  \Delta_h^{2m-u}(\phi_{\mu}g)(\cdot+uh)\Delta_h^u(\psi_{\mu}f)(\cdot) \Big|L_p(\R)  
\Big\|^q \Bigg\}^{1/q}
\\ 
& \lesssim & \Bigg\{\sum_{k=0}^{\infty}\, 2^{ksq} \sup_{|h|< 2^{-k}}\, \bigg(  \sum_{\mu\in \Z}\big\| \Delta_h^{2m-u}(\phi_{\mu}g)(\cdot+uh)
\Delta_h^u(\psi_{\mu}f)(\cdot) \big|L_p(\R)  \big\|^p\bigg)^{q/p} \Bigg\}^{1/q}
\eeqq
for any $u$, $0 \le u \le 2m$. To  estimate $S_u$ we have to distinguish into different cases.\\
{\it Step 1.} The case $0 \le u < m$. Recall, $B^s_{p,q} (\R) \hookrightarrow L_\infty (\R)$ under the given restrictions, see Lemma \ref{emb1}. It follows
\beqq
\big\| \Delta_h^{2m-u}(\phi_{\mu}g)(\cdot +uh)&& \hspace{-0.9cm}\Delta_h^u(\psi_{\mu}f)(\cdot) \big|L_p(\R)  \big\| 
\\
 & \leq & \big\|   \Delta_h^{2m-u}(\phi_{\mu}g)(\cdot+uh)  \big|L_p(\R)  \big\|\cdot  \big\|\Delta_h^u(\psi_{\mu}f)(\cdot) \big| L_\infty (\R)\big\|
 \\
 & \lesssim & \big\|   \Delta_h^{2m-u}(\phi_{\mu}g)(\cdot+uh)  \big|L_p(\R)  \big\|\cdot  \big\| \psi_{\mu}f \, \big|\, L_\infty(\R)\big\|\\
 & \leq & \big\|   \Delta_h^{2m-u}(\phi_{\mu}g)(\cdot)  \big|L_p(\R)  \big\| \cdot \| \, f\, |\, B^s_{p,q }(\R)_{\unif}\|\, ,
\eeqq
where in the last step we performed a change of variable. Consequently, we obtain
\beqq
S_u \lesssim  \Bigg\{\sum_{k=0}^{\infty}\bigg(2^{ksp}\sup_{|h|< 2^{-k}} \sum_{\mu\in \Z} 
\big\| \Delta_h^{2m-u}(\phi_{\mu}g)(\cdot)  \big|L_p(\R)  
\big\|^p\bigg)^{q/p} \Bigg\}^{1/q} \cdot \| \, f \, |B^s_{p,q}(\R)_{\unif}\|\, .
\eeqq
Now we deal with the term $\{\, \ldots \, \}^{1/q}$ on the right-hand side. As in proof of Proposition \ref{s<p,q<u}, 
see formula \eqref{decom}, we use the decomposition
\beqq
\phi_{\mu} g = \sum_{\ell \in \zz} (\phi_{\mu} g_{k+\ell})\,.
\eeqq 
This yields
\beqq 
\Bigg\{\sum_{k=0}^{\infty} && \hspace{-0.6cm}
\bigg(2^{ksp}\sup_{|h|< 2^{-k}} \sum_{\mu\in \Z} \big\| \Delta_h^{2m-u}(\phi_{\mu}g)(\cdot)  \big|L_p(\R)  
\big\|^p\bigg)^{q/p} \Bigg\}^{1/q} 
\\
& \le & \Bigg\{\sum_{k=0}^{\infty}\bigg(2^{ksp}\sup_{|h|< 2^{-k}} \sum_{\mu\in \Z} 
\Big\| \sum_{\ell \in \zz} |\Delta_h^{2m-u}(\phi_{\mu}g_{k+\ell})(\cdot)|\,   \Big|L_p(\R)  
\Big\|^p\bigg)^{q/p} \Bigg\}^{1/q} \, .
\eeqq
Since $2m-u \ge m$ we can proceed as in Substep 1.2, proof of Proposition \ref{s<p,q<u}, starting at formula \eqref{k-02}.
As a result we find
\be\label{ws-17}
S_u\lesssim  \| \, g \, | B^s_{p,q}(\R)\| \,   \| f| B^s_{p,q}(\R)_{\unif}\|
\ee
for all $u$, $0 \le u \le m$.
\\
{\it Step 2.} The case $m < u \le 2 m$. We have
\beqq
\big\| \Delta_h^{2m-u}(\phi_{\mu}g)(\cdot+uh)  \Delta_h^u(\psi_{\mu}f)(\cdot) \big|  L_p(\R)\big\| 
& \leq & 
\big\| \Delta_h^{2m-u}(\phi_{\mu}g)\big|L_\infty(\R) \big\| \, \big\|\Delta_h^u(\psi_{\mu}f) \big|  L_p(\R)\big\|
 \\
 & \lesssim &   \big\| \phi_{\mu}g  \big|L_\infty(\R) \big\| \,  \big\|\Delta_h^u(\psi_{\mu}f) \big|  L_p(\R)\big\|\, .
\eeqq
Inserting this into $S_u$ we find
\be\label{ws-23}
S_u \leq \Bigg\{\sum_{k=0}^{\infty}\bigg(2^{ksp} \sup_{|h|< 2^{-k}}  
\sum_{\mu\in \Z}\big\| \phi_{\mu}g \big|L_\infty (\R) \big\|^p \,  \big\|\Delta_h^u(\psi_{\mu}f)(\cdot) \big|  L_p(\R)\big\|^p 
\bigg)^{q/p} \Bigg\}^{1/q}.
\ee
Using the triangle inequality in $L_{q/p}$ we arrive at
\beqq
S_u &  = & \Bigg\{\sum_{\mu\in \Z} \bigg( \big\| \phi_{\mu}g \big|L_\infty(\R) \big\|^q  \sum_{k=0}^{\infty} 2^{ksq}    
\sup_{|h|< 2^{-k}} \big\|\Delta_h^u(\psi_{\mu}f) \big|  L_p(\R)\big\|^q \bigg)^{p/q} \Bigg\}^{1/p} 
\\
&\leq &  \bigg\{\sum_{\mu\in \Z}   \big\| \phi_{\mu}g \big|L_\infty (\R) \big\|^p  \, 
 \big\|\,  \psi_{\mu}f\, |B^s_{p,q}(\R)\|^p \bigg\}^{1/p} 
\\
& \leq &\bigg\{\sum_{\mu\in \Z}   \big\| \phi_{\mu}g \big|L_\infty (\R) \big\|^p  \bigg\}^{1/p}  \,  
\big\|\, f\, |B^s_{p,q}(\R)_{\unif}\|.
\eeqq
Since $s>d/p$, there exists $\varepsilon>0$ such that $s-\varepsilon>d/p$. 
This implies $B^{s-\varepsilon}_{p,p}(\R)\hookrightarrow L_\infty(\R)$, see Lemma \ref{emb1}. Hence we obtain the estimate
\be\label{ws-19}
\bigg( \sum_{\mu\in \Z} \big\| \phi_{\mu}g \big|L_\infty (\R) \big\|^p    \bigg)^{1/p} 
\lesssim  \bigg( \sum_{\mu\in \Z} \big\| \phi_{\mu}g \big|B^{s-\varepsilon}_{p,p}(\R) \big\|^p    \bigg)^{1/p} 
\asymp \| \, g\, |B^{s-\varepsilon}_{p,p}(\R)\|
\ee
where we  used Corollary \ref{klar} with respect to $B^{s-\varepsilon}_{p,p}(\R)$. 
The elementary embedding $B^s_{p,q}(\R) \hookrightarrow B^{s-\varepsilon}_{p,p}(\R)$ implies 
\be\label{ws-18}
S_u\lesssim  \| \, g\,|\, B^s_{p,q}(\R)\| \,  \|\, f\,|\, B^s_{p,q}(\R)_{\unif} \|
\ee
also in this situation.
Summarizing, \eqref{ws-15}, \eqref{ws-15b}-\eqref{ws-18}, prove the claim \eqref{k-01}.
\end{proof}

\begin{remark}
 \rm
Let us mention that the inequality \eqref{ws-19} is not true in the limiting case $s=d/p$, $p < q\leq 1$.
From the Sobolev-type embedding $B^{d/p}_{p,q}(\R) \hookrightarrow B^{d/q}_{q,q}(\R)$, see \cite[2.7.1]{Tr83}, we derive
\beqq
\bigg( \sum_{\mu\in \Z} \big\| \phi_{\mu}g \big|L_\infty (\R) \big\|^q    \bigg)^{1/q} 
\lesssim  \bigg( \sum_{\mu\in \Z} \big\| \phi_{\mu}g \big|B^{d/q}_{q,q}(\R) \big\|^q    \bigg)^{1/q} 
\lesssim \| \, g\, |B^{d/p}_{p,q}(\R)\|\, .
\eeqq
The exponent $q$ is best possible, i.e., 
if 
\[
 \bigg( \sum_{\mu\in \Z} \big\| \phi_{\mu}g \big|L_\infty (\R) \big\|^v    \bigg)^{1/v} \lesssim \| \, g\, |B^{d/p}_{p,q}(\R)\|
\]
holds for all  $g \in B^{d/p}_{p,q}(\R)$, then $v \ge q$ follows.
\end{remark}

\begin{proposition}\label{limes}
Let  $0 <p \leq 1$ and $s=d/p$.  Then there exists a constant $C>0$ such that
\beqq
\|\, fg\, |B^{d/p}_{p,p}(\R)\| \leq C\, \| g\,|\, B^{d/p}_{p,p}(\R)\| \,  \|\, f\,|\, B^{d/p}_{p,p}(\R)_{\unif} \|
\eeqq 
holds for all $g\in B^s_{p,p}(\R)$ and all $f\in B^s_{p,p}(\R)_{\unif}$.
\end{proposition}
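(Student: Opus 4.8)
The statement is the limiting ($s=d/p>0$, $p=q\le 1$) analogue of Proposition~\ref{wichtig}, and the plan is to follow that proof step by step, replacing each ingredient that fails at $s=d/p$ by a substitute valid under the restriction $p\le 1$. Since $p=q\le 1$, we have $B^{d/p}_{p,p}(\R)\hookrightarrow L_\infty(\R)$ by Lemma~\ref{emb1}(ii), so both $g$ and $\psi_\mu f$ are bounded, and we may again estimate $\|\,fg\,|B^{d/p}_{p,p}(\R)\|_{2m}$ via the localisation identity $fg=\sum_{\mu\in\Z}\phi_\mu g\,\psi_\mu f$, using Corollary~\ref{klar} (which holds for $s=d/p$ since $d/p>d\max(0,1/p-1)$ when $p\le 1$) to pass freely between $B^{d/p}_{p,p}(\R)$ and $B^{d/p}_{p,p,p}(\R)$. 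The $L_p$-term is handled exactly as in \eqref{ws-15}. For the difference term one expands $\Delta_h^{2m}(fg)$ by the Leibniz-type formula \eqref{ws-15b} and splits into the sums $S_u$, $0\le u\le 2m$.

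First I would treat the case $0\le u< m$. Here the argument from Step~1 of Proposition~\ref{wichtig} goes through verbatim: one estimates $\|\Delta_h^u(\psi_\mu f)\,|L_\infty(\R)\|\lesssim \|\psi_\mu f\,|L_\infty(\R)\|\le\|f\,|B^{d/p}_{p,p}(\R)_{\unif}\|$, pulls this factor out, decomposes $\phi_\mu g=\sum_{\ell\in\zz}\phi_\mu g_{k+\ell}$, and since $2m-u\ge m>s$ one repeats the Peetre-maximal-function estimate of Substep~1.2 of Proposition~\ref{s<p,q<u} (the chain \eqref{k-02}--\eqref{ws-12}); none of that used $s>d/p$ rather than $s\ge d\max(0,1/p-1)$, so it applies. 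This yields $S_u\lesssim \|g\,|B^{d/p}_{p,p}(\R)\|\,\|f\,|B^{d/p}_{p,p}(\R)_{\unif}\|$.

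The main obstacle is the case $m<u\le 2m$, where the proof of Proposition~\ref{wichtig} used $B^{s-\varepsilon}_{p,p}(\R)\hookrightarrow L_\infty(\R)$ with $s-\varepsilon>d/p$ in \eqref{ws-19} — and this is precisely what fails at $s=d/p$, as the remark after Proposition~\ref{wichtig} points out. The fix exploits $q=p$. As in \eqref{ws-23}--\eqref{ws-19} one reaches
\[
S_u\le\Big(\sum_{\mu\in\Z}\|\phi_\mu g\,|L_\infty(\R)\|^p\Big)^{1/p}\,\|f\,|B^{d/p}_{p,p}(\R)_{\unif}\|,
\]
and now, instead of \eqref{ws-19}, I would use the sharp Sobolev-type embedding $B^{d/p}_{p,p}(\R)\hookrightarrow B^{0}_{\infty,p}(\R)\hookrightarrow L_\infty(\R)$ (valid for $p\le 1$; the second embedding is Lemma~\ref{emb1}(ii) with $s=0$, $q=p\le 1$) together with the localisation of $B^{d/p}_{p,p}(\R)$ via Corollary~\ref{klar} applied at the working indices: $\sum_\mu\|\phi_\mu g\,|L_\infty(\R)\|^p\lesssim\sum_\mu\|\phi_\mu g\,|B^{d/p}_{p,p}(\R)\|^p\asymp\|g\,|B^{d/p}_{p,p}(\R)\|^p$, the last step being exactly $B^{d/p}_{p,p}(\R)=B^{d/p}_{p,p,p}(\R)$. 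The point is that at $q=p$ one does not need the extra room $\varepsilon>0$: the $\ell_p$-sum over $\mu$ of the $L_\infty$-norms is already controlled by the $\ell_p$-localised norm, which is the full Besov norm by Corollary~\ref{klar}. Collecting the bounds on all $S_u$ and the $L_p$-term gives the claimed inequality; the one genuinely delicate point, and the one I would write out carefully, is the justification of $B^{d/p}_{p,p}(\R)\hookrightarrow B^0_{\infty,p}(\R)$ and that this embedding interacts correctly with the fixed multiplier $\phi_\mu$ uniformly in $\mu$.
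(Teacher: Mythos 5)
Your proof is correct, but it takes a genuinely different and considerably longer route than the paper. The paper's proof of Proposition~\ref{limes} is three lines: by Corollary~\ref{klar} one has $\|fg\,|B^{d/p}_{p,p}(\R)\|^p\asymp\sum_{\mu}\|(\psi_\mu f)(\phi_\mu g)\,|B^{d/p}_{p,p}(\R)\|^p$, then the algebra property of $B^{d/p}_{p,p}(\R)$ at the endpoint (Theorem~\ref{algebra}, second bullet, which applies since $q=p\le 1$) factors each summand as $\|\psi_\mu f\,|B^{d/p}_{p,p}\|^p\,\|\phi_\mu g\,|B^{d/p}_{p,p}\|^p$, and pulling out the sup over $\mu$ and applying Corollary~\ref{klar} once more to $g$ finishes the argument. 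You instead rerun the full difference-based machinery of Proposition~\ref{wichtig} and repair the single step that breaks at $s=d/p$, namely \eqref{ws-19}: your observation that for $q=p\le 1$ the endpoint embedding $B^{d/p}_{p,p}(\R)\hookrightarrow L_\infty(\R)$ of Lemma~\ref{emb1}(ii) makes the extra room $\varepsilon>0$ unnecessary, so that $\sum_\mu\|\phi_\mu g\,|L_\infty\|^p\lesssim\sum_\mu\|\phi_\mu g\,|B^{d/p}_{p,p}\|^p\asymp\|g\,|B^{d/p}_{p,p}\|^p$, is exactly right, and the embedding constant is of course uniform in $\mu$. Your check that the case $0\le u<m$ and the chain \eqref{k-02}--\eqref{ws-12} only need $s>d\max(0,1/p-1)$ and $s<m$ is also correct. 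What your approach buys is independence from Theorem~\ref{algebra} (whose proof the paper only cites) and a template that makes transparent exactly where $q\le 1$ enters; what the paper's approach buys is brevity, by outsourcing all the difference estimates to the already-proved algebra property. Either way the statement is established.
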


\begin{proof}
Let $\phi$ and $\psi$ be as in proof of Proposition \ref{wichtig}.
 Employing Corollary \ref{klar} and the algebra property of $B^{d/p}_{p,p}(\R)$, see Theorem \ref{algebra}, 
we get
\beqq
\| \, fg\,|\, B^s_{p,p}(\R)\| & \asymp & \bigg(\sum_{\mu \in \Z} 
\| \, (\psi_{\mu} f )(\phi_\mu g)\,|\, B^s_{p,p}(\R)\|^p \bigg)^{1/p}
\\
& \lesssim &   \bigg(\sum_{\mu \in \Z} \| \, \psi_\mu f \,|\, B^s_{p,p}(\R)\|^p
\| \, \phi_\mu g\,|\, B^s_{p,p}(\R)\|^p \bigg)^{1/p}
\\
& \lesssim &   \|\, f \, |B^s_{p,p}(\R)_\unif\|\, 
\bigg(\sum_{\mu \in \Z} \| \, \phi_\mu g\,|\, B^s_{p,p}(\R)\|^p \bigg)^{1/p}
\\
& \lesssim &   \|\, f \, |B^s_{p,p}(\R)_\unif\|\,\,   \| \,  g\,|\, B^s_{p,p}(\R)\|\, ,
\eeqq
which proves the claim.
\end{proof}

\begin{remark}
 \rm
 As already mentioned above this result can be found  in Netrusov \cite{Net} and Triebel \cite[Proposition 2.22]{Tr06}. 
\end{remark}

\noindent
{\bf Proof of Theorem \ref{b-main}.}
From Proposition \ref{wichtig} we derive
$   B^s_{p,q}\R)_\unif \hookrightarrow M(B^s_{p,q}(\R))$, whereas from 
Lemma \ref{mult}
$M(B^s_{p,q}(\R))  \hookrightarrow B^s_{p,q}(\R)_{\unif}$ follows.


\subsection{Proofs of Theorems \ref{final} and \ref{nec1}} \label{main4}


\begin{lemma}\label{unif-L}
 Let $0<p,q\leq \infty$ and $s >d(1/p-1)_+$. Then we have
\beqq
B^s_{p,q}(\R)\hookrightarrow M^s_{p,q}(\R)\hookrightarrow  B^s_{p,q}(\R)_{\unif}.
\eeqq 
\end{lemma}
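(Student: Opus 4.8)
The plan is to prove the two embeddings in Lemma \ref{unif-L} separately, using Proposition \ref{diff} (the difference characterization of $B^s_{p,q}(\R)$) as the basic tool and the test-sequence $\{C_\mu\}_\mu = \{\delta_{\mu,\nu}\}_\mu$ trick to get the right-most embedding almost for free.

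First I would establish $M^s_{p,q}(\R)\hookrightarrow B^s_{p,q}(\R)_{\unif}$. Fix $\nu \in \Z$ and plug the admissible sequence $C_\mu := \delta_{\mu,\nu}$ into the supremum defining $\|f|M^s_{p,q}(\R)\|$. The first summand becomes $\|\psi_\nu f\,|L_p(\R)\|^q$ and the $k$-sum collapses to $\sum_{k=0}^\infty \bigl(2^{ksp}\sup_{|h|<2^{-k}}\|\Delta_h^m(\psi_\nu f)|L_p(\R)\|^p\bigr)^{q/p} = \sum_{k=0}^\infty \bigl(2^{ks}\omega_m(\psi_\nu f,2^{-k})_p\bigr)^q$. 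By Proposition \ref{diff} this is (up to equivalence and the $L_p$-term) exactly $\|\psi_\nu f\,|B^s_{p,q}(\R)\|_m^q \asymp \|\psi_\nu f\,|B^s_{p,q}(\R)\|^q$. Taking the supremum over $\nu$ gives $\|f\,|B^s_{p,q}(\R)_{\unif}\| \lesssim \|f\,|M^s_{p,q}(\R)\|$; this is precisely the observation already recorded in the text right after Definition \ref{multdef}, so it only needs to be spelled out.

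Next I would prove $B^s_{p,q}(\R)\hookrightarrow M^s_{p,q}(\R)$. Let $g\in B^s_{p,q}(\R)$ and let $\{C_\mu\}_\mu$ with $\|\{C_\mu\}_\mu\,|\ell_p(\Z)\|\le 1$ be arbitrary; set $G := \sum_{\mu\in\Z} C_\mu\psi_\mu$. The $L_p$-term is handled by $\|gG\,|L_p(\R)\| \le \|g\,|L_\infty(\R)\|\,\|G\,|L_p(\R)\| \lesssim \|g\,|B^s_{p,q}(\R)\|$, using $B^s_{p,q}(\R)\hookrightarrow L_\infty(\R)$ from Lemma \ref{emb1}(ii) (valid since $s>d/p$) and the finite-overlap bound $\|G\,|L_p(\R)\|^p \lesssim \sum_\mu |C_\mu|^p \le 1$. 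For the differences, since $\psi_\mu g$ vanishes unless $|x-\mu|<c$, the quantities $\sum_{\mu}|C_\mu|^p\|\Delta_h^m(\psi_\mu g)|L_p(\R)\|^p$ are, after the usual bounded-overlap rearrangement, comparable to $\sup_{\mu}|C_\mu|^p \cdot (\text{finitely many neighbouring difference terms})$, hence dominated term-by-term (in $k$) by $\omega_m(\psi_\mu g, 2^{-k})_p^p$ summed over a bounded block of $\mu$'s. The point is that $\sum_{k}\bigl(2^{ksp}\sup_{|h|<2^{-k}}\sum_\mu|C_\mu|^p\|\Delta_h^m(\psi_\mu g)|L_p(\R)\|^p\bigr)^{q/p} \lesssim \sup_\mu\|\psi_\mu g\,|B^s_{p,q}(\R)\|^q \le \|g\,|B^s_{p,q}(\R)_{\unif}\|^q \lesssim \|g\,|B^s_{p,q}(\R)\|^q$, where the last step invokes Proposition \ref{s<p,q<u}(ii) with $v=\infty$ (equivalently the trivial $B^s_{p,q}(\R)\hookrightarrow B^s_{p,q}(\R)_{\unif}$). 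Actually, more directly: by Proposition \ref{s<p,q<u}(ii) one has $B^s_{p,q}(\R)\hookrightarrow B^s_{p,q,v}(\R)$ for $v\ge\max(p,q)$, and since $q<p$ here (the case of interest), $v=p$ is admissible, so $\sum_\mu\|\psi_\mu g\,|B^s_{p,q}(\R)\|^p \lesssim \|g\,|B^s_{p,q}(\R)\|^p$; combined with $|C_\mu|\le 1$ this closes the estimate after bounding $\sup_{|h|<2^{-k}}\sum_\mu|C_\mu|^p\|\Delta_h^m(\psi_\mu g)|L_p(\R)\|^p \le \sum_\mu \omega_m(\psi_\mu g,2^{-k})_p^p$.

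The main obstacle is bookkeeping in the second embedding: one must pass from the single supremum over $h$ with $|h|<2^{-k}$ sitting \emph{outside} the $\mu$-sum to a per-$\mu$ modulus of smoothness $\omega_m(\psi_\mu g,2^{-k})_p$, which is legitimate because $\sup_{|h|<2^{-k}}\sum_\mu a_\mu(h) \le \sum_\mu \sup_{|h|<2^{-k}} a_\mu(h)$, and then sum the resulting $\ell_p$-combination of the $B^s_{p,q}$-quasinorms of $\psi_\mu g$ against $\|g\,|B^s_{p,q}(\R)\|$ via Proposition \ref{s<p,q<u}(ii). The remaining care is that all manipulations are done with $\|\cdot\,|B^s_{p,q}(\R)\|_m$ (Proposition \ref{diff}) and that $m$ is chosen with $s<m\le s+1$ as in Definition \ref{multdef}, so no extra smoothness of $\psi$ beyond $C_0^\infty$ is needed; the $L_1^{\ell oc}$ membership required in Definition \ref{multdef} follows from $\psi_\mu g\in L_p$ via Lemma \ref{emb1}(i).
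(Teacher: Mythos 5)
Your proof of the right-hand embedding, $M^s_{p,q}(\R)\hookrightarrow B^s_{p,q}(\R)_{\unif}$, is exactly the paper's argument (test with $C_\mu=\delta_{\mu,\nu}$ and invoke Proposition \ref{diff}) and is fine. The left-hand embedding is where the real content sits, and there your argument has a genuine gap. A minor point first: you estimate the $L_p$-term by $\|\,g\,|L_\infty(\R)\|\cdot\|\,G\,|L_p(\R)\|$ using $B^s_{p,q}(\R)\hookrightarrow L_\infty(\R)$, which needs $s>d/p$, whereas the lemma only assumes $s>d(1/p-1)_+$; the correct splitting is $\|\,gG\,|L_p(\R)\|\le\|\,G\,|L_\infty(\R)\|\cdot\|\,g\,|L_p(\R)\|\lesssim\|\,g\,|L_p(\R)\|$, since $0\le G=\sum_\mu C_\mu\psi_\mu\le\sup_\mu|C_\mu|\le 1$ by \eqref{ws-03}.

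The serious problem is the difference term. After using $|C_\mu|\le 1$ you must bound
\[
\Big\{\sum_{k=0}^\infty\Big(2^{ksp}\sum_{\mu\in\Z}\omega_m(\psi_\mu g,2^{-k})_p^p\Big)^{q/p}\Big\}^{1/q},
\]
an $\ell_q(k;\ell_p(\mu))$-type quantity, by $\|\,g\,|B^s_{p,q}(\R)\|$, and both of your proposed closings interchange the $k$- and $\mu$-scales in the illegitimate direction. The bound by $\sup_\mu\|\,\psi_\mu g\,|B^s_{p,q}(\R)\|^q$ would require $\sum_k(\sup_\mu c_{k\mu})^q\lesssim\sup_\mu\sum_k c_{k\mu}^q$, which is false (take $c_{k\mu}=\delta_{k\mu}$). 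The ``more direct'' route via Proposition \ref{s<p,q<u}(ii) controls only $\big(\sum_\mu\|\,\psi_\mu g\,|B^s_{p,q}(\R)\|^p\big)^{1/p}$, i.e.\ an $\ell_p(\mu;\ell_q(k))$ quantity; for $q<p$ Minkowski's inequality gives $\ell_p(\mu;\ell_q(k))\le\ell_q(k;\ell_p(\mu))$, the reverse of what you need, and no abstract interchange can repair this (again $c_{k\mu}=\delta_{k\mu}$ over $N$ indices gives $N^{1/q}$ versus $N^{1/p}$). This is precisely why Definition \ref{multdef} keeps the $k$-sum outermost and why the paper does not reduce the left-hand embedding to Proposition \ref{s<p,q<u}(ii) as a black box: after the trivial reduction $|C_\mu|\le1$ one must rerun the argument of Substep 1.2 of that proposition from \eqref{k-02} onward directly on the displayed quantity --- decompose $g=\sum_{\ell}g_{k+\ell}$, apply Lemma \ref{help} and the Peetre maximal function to obtain \eqref{Pfa}, that is $\sum_\mu\sup_{|h|<2^{-k}}\|\Delta_h^m(\psi_\mu g_{k+\ell})\,|L_p(\R)\|^p\lesssim\min\{1,2^{m\ell p}\}\,\|g_{k+\ell}\,|L_p(\R)\|^p$, and then sum in $\ell$ and $k$. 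Without that step your proof of $B^s_{p,q}(\R)\hookrightarrow M^s_{p,q}(\R)$ does not close.
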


\begin{proof}
The right-hand embedding has been explained just before Theorem \ref{final}. 
Also the left-hand embedding is easily seen. From the trivial inequality
\beqq
\| g|M^s_{p,q}(\R)\|\leq 
\|g|L_p(\R)\|+ \Bigg\{\sum_{k=0}^{\infty}\bigg(2^{ksp} 
\sup_{|h| < 2^{-k}}  \sum_{\mu\in \Z}     \big\|\Delta_h^m(\psi_{\mu}g)(\cdot) \big|  L_p(\R)\big\|^p \bigg)^{q/p} \Bigg\}^{1/q}
\eeqq
and the same argument as used in Substep 1.2 of the proof of Proposition \ref{s<p,q<u}, see formula \eqref{k-02},   
the left-hand embedding follows.
\end{proof}

\noindent
{\bf Proof of Theorem \ref{final}}. 
{\it Step 1.} Let $m$ be a natural number such that $m-1 < s < m$. We claim that 
\beqq
\| \, fg\, |B^s_{p,q}(\R)\|_{2m} \leq C\,  \|\, f\, |M^s_{p,q}(\R)\| \, \| \, g\, |B^s_{p,q}(\R)\| 
\eeqq
holds for all $f\in M^s_{p,q}(\R)$ and $g\in B^s_{p,q}(\R)$. As a first step of the proof we observe that 
in \eqref{ws-17} we did not use the condition $p\le q$. Hence we can apply \eqref{ws-17} for all terms $S_u$ with $0 \le u \le m$. 
Since $M^s_{p,q}(\R)\hookrightarrow  B^s_{p,q}(\R)_{\unif}$ this is sufficient for us.  It remains to deal  with the case $ m < u \le 2m$. 
Starting point for us is formula \eqref{ws-23}
\[
S_u \leq \Bigg\{\sum_{k=0}^{\infty}\bigg(2^{ksp} \sup_{|h| < 2^{-k}}  \sum_{\mu\in \Z}\big\| \phi_{\mu}g \big|L_\infty(\R) \big\|^p 
\cdot  \big\|\Delta_h^m(\psi_{\mu}f)(\cdot) \big|  L_p(\R)\big\|^p \bigg)^{q/p} \Bigg\}^{1/q}.
\] 
Now we choose 
\[
C_{\mu}:= c \, \frac{ \| \phi_{\mu}g  |L_\infty(\R) \| }{ \|\, g\,  |B^{s}_{p,q}(\R) \| }\, , \qquad \mu\in \Z,
\]
where $c$ will be fixed later on.
Of course, here we assume that $\|\, g \,  |B^{s}_{p,q}(\R) \|>0$,  otherwise there is nothing to prove. 
With a proper choice of $c$, the sequence $\{C_{\mu} \}_{\mu\in \Z}$ belongs to the set $\ell_p^+$ because of 
\[
\sum_{\mu \in \Z} C_\mu^p = c^p \, \sum_{\mu \in \Z} \frac{ \| \phi_{\mu}g  |L_\infty(\R) \|^p }{ \|\, g\,  |B^{s}_{p,q}(\R) \|^p}
\le 1\, , 
\]
see \eqref{ws-19}. Notice that $c$ can be chosen independent of $g$.
Hence, we conclude that
\beqq
S_u \lesssim  \big\| \, g\, \big|B^s_{p,q}(\R)\big\|\cdot \Bigg\{\sum_{k=0}^{\infty}\bigg(2^{ksp} \sup_{|h| < 2^{-k}}  
\sum_{\mu\in \Z} C_{\mu}^p \big\|\Delta_h^m(\psi_{\mu}f)(\cdot) \big|  L_p(\R)\big\|^p \bigg)^{q/p} \Bigg\}^{1/q}  
\eeqq
which proves $M^s_{p,q}(\R)\hookrightarrow M(B^s_{p,q}(\R))$.\\
{\it Step 2.} Let $f \in  M(B^s_{p,q}(\R))$  and $\{C_{\mu}\}_{\mu}\in \ell_p (\Z)$.
We subdivide $\Z$ into a finite number of disjoint sets $\Omega_\ell$, $\ell=1,\ldots,n$,  such that 
\[
\dist\big(\supp  \psi_{\mu_i}, \supp \psi_{\mu_j}\big)\geq 2m\, , 
\]
for all $\mu_i,\mu_j\in \Omega_\ell$, $\mu_i \not=\mu_j$, and all $\ell=1,\ldots,n$. 
With 
\[
 P_\mu := \{x \in \R: \: \dist (\supp \psi_\mu,x)\le m\}\, , \qquad \mu \in \Z\, , 
\]
we find
\[
  \sum_{\mu\in \Omega_\ell} |C_{\mu}|^p    \int_{P_\mu} |\Delta_h^m(\psi_{\mu}f)(x) |^p dx = \int_{\R} \Big|\Delta_h^m \Big(
\sum_{\mu\in \Omega_\ell} C_{\mu} \psi_{\mu} f \Big)(x) \Big|^p dx \,, \qquad |h|< 1.
\]
This implies
\beq \label{k-06}
\Bigg\{\sum_{k=0}^{\infty}&& \hspace{-0.7cm}\bigg(2^{ksp} 
\sup_{|h| < 2^{-k}}  \sum_{\mu\in \Z} |C_{\mu}|^p    \big\|\Delta_h^m(\psi_{\mu}f)(\cdot) \big|  L_p(\R)\big\|^p \bigg)^{q/p} \Bigg\}^{1/q} 
\nonumber
\\
&\lesssim & \sum_{\ell=1,\ldots,n} \Bigg\{\sum_{k=0}^{\infty}\bigg(2^{ksp} \sup_{|h| < 2^{-k}}  \sum_{\mu\in \Omega_\ell} 
|C_{\mu }|^p    \big\|\Delta_h^m(\psi_{\mu }f)(\cdot) \big|  L_p(\R)\big\|^p \bigg)^{q/p} \Bigg\}^{1/q}
\nonumber
\\
& \lesssim & \sum_{\ell=1,\ldots,n} \Big\|  \sum_{\mu \in\Omega_\ell} C_{\mu } \psi_{\mu }f \, \Big|B^s_{p,q}(\R)\Big\|.
\eeq 
Next, we make use of the definition of $M(B^s_{p,q}(\R))$ 
\be \label{k-05}
\Big\| f \sum_{\mu\in \Omega_\ell } C_{\mu } \psi_{\mu }\Big|B^s_{p,q}(\R)\Big\| \lesssim 
\big\| f\big| M(B^s_{p,q}(\R)) \big\|  \cdot \Big\|   \sum_{\mu \in \Omega_\ell} C_{\mu } \psi_{\mu }\Big|B^s_{p,q}(\R)\Big\| .
\ee 
A simple calculation leads to
\beqq
 && \hspace{-0.7cm} \Big\|   \sum_{\mu \in \Omega_\ell} C_{\mu }\psi_{\mu }\Big|B^s_{p,q}(\R)\Big\|_m  
\\
&\lesssim &  \Big\|   \sum_{\mu \in \Omega_\ell} C_{\mu } \psi_{\mu }\Big|L_{p}(\R)\Big\| +
\Bigg\{\sum_{k=0}^{\infty} 2^{ksq} \sup_{|h| < 2^{-k}}  \bigg(\sum_{\mu\in \Omega_\ell} |C_{\mu}|^p 
\big\|\Delta_h^m\psi_{\mu} (\cdot) \big|L_p(\R) \big\|^p \bigg)^{q/p} \Bigg\}^{1/q}
\\
&\lesssim & \bigg(   \sum_{\mu \in \Omega_\ell} |C_{\mu }|^p \|\,  \psi\, |L_{p}(\R)\|^p\bigg)^{1/p} +
\Bigg\{\sum_{k=0}^{\infty} 2^{ksq} \sup_{|h| < 2^{-k}}  \bigg(\sum_{\mu\in \Z} |C_{\mu}|^p 
\big\|\Delta_h^m\psi (\cdot) \big|L_p(\R) \big\|^p \bigg)^{q/p} \Bigg\}^{1/q} 
\\
& \lesssim & \|\, \{C_\mu\}_\mu\, |\ell_p (\Z)\|\, \big\| \psi\big|B^s_{p,q}(\R)\big\|.
\eeqq
Inserting this into \eqref{k-05} we find
\be\label{ws-31}
\sup_{\|\, \{C_\mu\}_\mu\, |\ell_p (\Z)\| \le 1} \, 
\Big\| f \sum_{\mu\in \Omega_\ell } C_{\mu } \psi_{\mu }\Big|B^s_{p,q}(\R)\Big\| \lesssim \big\|\,  f\, \big| M(B^s_{p,q}(\R)) \big\|   
\ee
for all $\ell=1,\ldots,n$. 
Consequently, we obtain $\big\| f\big| M^s_{p,q}(\R) \big\| \lesssim \big\| f\big| M(B^s_{p,q}(\R)) \big\| $. The proof is complete.
\hspace*{\fill} \rule{3mm}{3mm} 
\\

In Step 2 of the preceding proof we did not use the restrictions in $p,q$ and $s$. We only used the possibility to describe 
the quasi-norm of $B^s_{p,q} (\R)$ by differences as explained in Proposition \ref{diff}. This yields the following.

\begin{lemma}\label{nec}
Let $ 0< p,q \le \infty$ and $s> d (\frac 1p -1)_+$. Then we have the  continuous embedding 
\beqq
M(B^s_{p,q}(\R)) \hookrightarrow M^s_{p,q}(\R)\, .
\eeqq
\end{lemma}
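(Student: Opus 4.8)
The plan is to observe that the statement is already contained, almost verbatim, in Step~2 of the proof of Theorem~\ref{final}, so the whole task reduces to checking that nothing in that step used the restriction $q<p$. Concretely, I would fix $f\in M(B^s_{p,q}(\R))$ and a sequence $\{C_\mu\}_\mu\in\ell_p(\Z)$, let $m$ be the natural number from Definition~\ref{multdef} (so $s<m\le s+1$), and decompose $\Z$ into finitely many disjoint pieces $\Omega_1,\dots,\Omega_n$ with $\dist(\supp\psi_{\mu_i},\supp\psi_{\mu_j})\ge 2m$ whenever $\mu_i,\mu_j\in\Omega_\ell$, $\mu_i\neq\mu_j$. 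For $|h|<1$ the functions $\Delta_h^m(\psi_\mu f)$, $\mu\in\Omega_\ell$, are supported in $m$-neighbourhoods of the $\supp\psi_\mu$ and hence have pairwise disjoint supports, which gives the exact identity
\[
\sum_{\mu\in\Omega_\ell}|C_\mu|^p\,\|\Delta_h^m(\psi_\mu f)\,|\,L_p(\R)\|^p
=\Big\|\Delta_h^m\Big(\sum_{\mu\in\Omega_\ell}C_\mu\psi_\mu f\Big)\,\Big|\,L_p(\R)\Big\|^p .
\]
Multiplying by $2^{ksp}$, taking suprema over $|h|<2^{-k}$, summing in $k$ and using the (finite) triangle inequality over $\ell=1,\dots,n$, this reduces the quantity tested in the definition of $\|f\,|M^s_{p,q}(\R)\|$ to a sum of $B^s_{p,q}$-quasi-norms of $\sum_{\mu\in\Omega_\ell}C_\mu\psi_\mu f$; here one uses Proposition~\ref{diff} to pass between the Fourier-analytic quasi-norm and the difference quasi-norm $\|\cdot\,|B^s_{p,q}(\R)\|_m$.

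Next I would invoke the multiplier property of $f$ in the form $\|f\sum_{\mu\in\Omega_\ell}C_\mu\psi_\mu\,|B^s_{p,q}(\R)\|\ls\|f\,|M(B^s_{p,q}(\R))\|\cdot\|\sum_{\mu\in\Omega_\ell}C_\mu\psi_\mu\,|B^s_{p,q}(\R)\|$, and then bound the last factor. Using once more the $2m$-separation of supports inside $\Omega_\ell$ together with Proposition~\ref{diff} applied to the fixed smooth profile $\psi$, one obtains $\|\sum_{\mu\in\Omega_\ell}C_\mu\psi_\mu\,|B^s_{p,q}(\R)\|\ls\|\{C_\mu\}_\mu\,|\ell_p(\Z)\|\cdot\|\psi\,|B^s_{p,q}(\R)\|$ uniformly in $\ell$. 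Combining this with the bound for the $L_p$-term in $\|f\,|M^s_{p,q}(\R)\|$ — which follows from the embedding $B^s_{p,q}(\R)\hookrightarrow L_{\max(1,p)}(\R)$ of Lemma~\ref{emb1}(i) applied to each $\sum_{\mu\in\Omega_\ell}C_\mu\psi_\mu f$ — and taking the supremum over $\|\{C_\mu\}_\mu\,|\ell_p(\Z)\|\le 1$ yields $\|f\,|M^s_{p,q}(\R)\|\ls\|f\,|M(B^s_{p,q}(\R))\|$, i.e.\ the claimed embedding.

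The only place where hypotheses on the parameters enter is through Proposition~\ref{diff}: it requires $s>d\max(0,\tfrac1p-1)$, which is exactly the assumption of the lemma, and a difference order $m>s$, which the $m$ of Definition~\ref{multdef} satisfies; moreover the equivalence of $\|\cdot\,|B^s_{p,q}(\R)\|_m$ with the standard quasi-norm is guaranteed only on $L_{\max(1,p)}(\R)$, and all functions occurring ($\psi_\mu f$, $\sum_{\mu\in\Omega_\ell}C_\mu\psi_\mu f$, $\sum_{\mu\in\Omega_\ell}C_\mu\psi_\mu$) lie there by Lemma~\ref{emb1}(i), being finite-range sums or $\ell_p$-controlled sums with separated supports. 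I expect the only point requiring genuine care to be this bookkeeping: the $L_{\max(1,p)}$-convergence of $\sum_{\mu\in\Omega_\ell}C_\mu\psi_\mu f$, which rests on $\sup_\mu\|\psi_\mu f\,|L_p(\R)\|\ls\|f\,|M(B^s_{p,q}(\R))\|$ (a consequence of Lemma~\ref{mult} and $B^s_{p,q}(\R)\hookrightarrow L_p(\R)$), and the disjointness-of-supports observation that makes the displayed identity exact; the remaining computation is literally the one performed in Step~2 of the proof of Theorem~\ref{final}, in which $q<p$ is never invoked.
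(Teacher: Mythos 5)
Your proposal is correct and is essentially the paper's own argument: the authors state explicitly that Lemma \ref{nec} follows because Step 2 of the proof of Theorem \ref{final} (the support-separation decomposition into the sets $\Omega_\ell$, the exact $L_p$-identity for the differences, the multiplier bound \eqref{k-05}, and the estimate of $\|\sum_{\mu\in\Omega_\ell}C_\mu\psi_\mu\,|B^s_{p,q}(\R)\|$) nowhere uses the restrictions $q<p<\infty$ and $s>d/p$, only the difference characterization of Proposition \ref{diff} under $s>d\max(0,\tfrac1p-1)$. Your additional bookkeeping on the $L_p$-term and on membership in $L_{\max(1,p)}(\R)$ is consistent with what the paper implicitly relies on.
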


\noindent
{\bf Proof of Theorem \ref{nec1}}.
{\em Step 1.}
Let $f\in M(B^s_{p,q}(\R))$ and $(C_\mu)_\mu \in \ell_p (\Z)$. 
Then  \eqref{ws-31} yields what we need. 
\\
{\em Step 2.} Let $f$ be a function such that
 $\sum_{\mu \in \Z} C_\mu \psi_\mu f$ belongs to $B^s_{p,q}(\R)$ 
for all $\{C_\mu\}_\mu \in \ell_p (\Z)$ and 
\beqq
\sup_{\| \{C_\mu\}_\mu \, |\ell_p(\Z) \|\le 1} \, \Big\|\sum_{\mu \in \Z} C_\mu \psi_\mu f\Big|B^s_{p,q}(\R)\Big\|  < \infty \, .
\eeqq
By chosing $\{C_\mu\}_\mu$ appropriate it is immediate that 
$f \in B^s_{p,q}(\R)_\unif$.
On the one hand it follows
\beqq
\Big\|\, f \, \sum_{\mu\in \Z} C_{\mu} \psi_{\mu} 
\, \Big|L_p(\R)\Big\| \lesssim \Big\|f \, \sum_{\mu\in \Z} C_{\mu} \psi_{\mu} \Big|B^s_{p,q}(\R)\Big\| \le C_f < \infty
\eeqq
for all sequences $\{C_\mu\}_\mu $, $\|\, \{C_\mu\}_\mu\, |\ell_p (\Z)\| \le 1$.
On the other hand  \eqref{k-06} yields
\beqq
\Bigg\{\sum_{k=0}^{\infty}\bigg(2^{ksp} 
\sup_{|h| < 2^{-k}}  \sum_{\mu\in \Z} && \hspace{-0.7cm} |C_{\mu}|^p    
\big\|\Delta_h^m(\psi_{\mu}f)(\cdot) \big|  L_p(\R)\big\|^p \bigg)^{q/p} \Bigg\}^{1/q} 
\\
& \lesssim & \sum_{\ell=1,\ldots,n} \Big\|  \sum_{\mu \in\Omega_\ell} C_{\mu }\psi_{\mu }f \, \Big|B^s_{p,q}(\R)\Big\|
\lesssim  C_f <\infty
\eeqq 
for all sequences $\{C_\mu\}_\mu $, $\|\, \{C_\mu\}_\mu\, |\ell_p (\Z)\| \le 1$.
In view of the definition of $M^s_{p,q}(\R)$, combined with Theorem \ref{final}, we conclude that $f\in M(B^s_{p,q}(\R))$.
\hspace*{\fill} \rule{3mm}{3mm}


\subsection{Proof of Theorem \ref{final2} and  Theorem \ref{final3}} \label{main5}


\noindent
{\bf Proof of Theorem \ref{final2}}. Because of $s>0$ the space $B^s_{\infty,q}(\R)$ forms an algebra with respect 
to pointwise multiplication, see Theorem \ref{algebra}, i.e., $B^s_{\infty,q}(\R) \hookrightarrow M( B^s_{\infty,q}(\R))$.
On the other hand, the function $g \equiv 1$ belongs to $B^s_{\infty,q}(\R)$. 
This implies that a pointwise multiplier $f$ has to belong to $B^s_{\infty,q}(\R)$ as well.
\hspace*{\fill} \rule{3mm}{3mm} 
\\

\noindent
{\bf Proof of Theorem \ref{final3}}.  
Part (i) follows from Proposition \ref{limes} and Lemma \ref{mult}.
Now we turn to (ii).
Let $s=d/p$ and $q\leq \min(1,p)$.
The needed modifications of Step 1 of the proof of Theorem \ref{final}
are based on the estimate
\beqq
\sum_{\mu\in \Z}\big\| \phi_{\mu}f \big|L_\infty(\R)\big\|^p 
\lesssim \sum_{\mu\in \Z}\big\| \phi_{\mu}f \big|B^{s}_{p,q}(\R)\big\|^p  \lesssim \big\|f\big|B^{s}_{p,q}(\R)\big\|^p,
\eeqq
where we first employed Lemma \ref{emb1} and afterwards Proposition \ref{s<p,q<u}.
This yields sufficiency. Necessity is a consequence of Lemma \ref{nec}.
\\
Finally, the arguments from the proof of Theorem \ref{nec1} carry over to this limiting situation described in (iii).
\hspace*{\fill} \rule{3mm}{3mm}


\subsection{Localized Besov spaces as subspaces of $M(B^s_{p,q}(\R))$ and consequences} \label{main6}


\begin{proposition}\label{loc}
Let $0<q<p\leq\infty$ and $0<v\leq \infty$. Let either $s >d/p$ or $s =d/p$ and $q\leq 1$. 
Then we have $$B^s_{p,q,v}(\R)\hookrightarrow M(B^s_{p,q}(\R))$$
if and only if $1/v\geq 1/q-1/p$.
\end{proposition}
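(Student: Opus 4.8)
Proposition \ref{loc} is the statement I need to prove: under the stated restrictions on $s,p,q$, the embedding $B^s_{p,q,v}(\R)\hookrightarrow M(B^s_{p,q}(\R))$ holds if and only if $1/v\ge 1/q-1/p$. Let me sketch a proof plan.

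The plan is to prove sufficiency first, reducing everything to the machinery already developed. Assume $1/v\ge 1/q-1/p$, equivalently $v\le r$ where $1/r=(1/q-1/p)_+$ (note $1/r$ could be $\le 0$, i.e.\ all $v$ work, precisely when $q\ge p$, which is excluded, so $r<\infty$ here). First I would observe that it suffices to show $B^s_{p,q,v}(\R)\hookrightarrow M^s_{p,q}(\R)$, since $M^s_{p,q}(\R)=M(B^s_{p,q}(\R))$ by Theorem \ref{final} (for $s>d/p$) and Theorem \ref{final3}(ii) (for $s=d/p$, $q\le 1$). So fix $f\in B^s_{p,q,v}(\R)$ and a sequence $\{C_\mu\}_\mu$ with $\|\{C_\mu\}_\mu\,|\ell_p(\Z)\|\le 1$; I must bound the two terms in \eqref{ws-04}. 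The $L_p$ term is handled by the embedding $B^s_{p,q}(\R)_\unif\hookrightarrow L_\infty(\R)$ (Lemma \ref{emb1}) together with $\|\{C_\mu\}\|_{\ell_p}\le 1$, exactly as in \eqref{ws-15}. For the differences term, the key estimate is
\beqq
\sum_{\mu\in\Z}|C_\mu|^p\,\big\|\Delta_h^m(\psi_\mu f)\,|L_p(\R)\big\|^p
\le \Big(\sup_\mu |C_\mu|^{p}\cdot(\text{card overlaps})\Big)\,\sum_\mu \big\|\Delta_h^m(\psi_\mu f)\,|L_p(\R)\big\|^p,
\eeqq
but this crude bound is wasteful; instead I would use Hölder's inequality with exponents $r/p$ and its conjugate to split $\sum_\mu |C_\mu|^p a_\mu$ where $a_\mu=\|\Delta_h^m(\psi_\mu f)|L_p\|^p$, obtaining $\le \|\{C_\mu^p\}\|_{\ell_{(r/p)'}}\cdot\|\{a_\mu\}\|_{\ell_{r/p}}$, and $\|\{C_\mu^p\}\|_{\ell_{(r/p)'}}=\|\{C_\mu\}\|_{\ell_{p(r/p)'}}^p\le 1$ provided $p(r/p)'\ge p$, which holds, and one checks the exponents match up so that after taking $\sup_h$, multiplying by $2^{ksp}$ and summing in $k$ with the $\ell_{q/p}$ quasi-norm, one lands on $\big(\sum_\mu\|\psi_\mu f|B^s_{p,q}(\R)\|^v\big)^{p/v}=\|f|B^s_{p,q,v}(\R)\|^p$ — this is where the condition $v\le r$, i.e.\ $1/v\ge 1/q-1/p$, is exactly what is needed. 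The main obstacle in the sufficiency direction is organizing this Hölder step correctly across the mixed $\ell_q(\ell_p)$ structure and the interchange of $\sup_h$ with the sum over $\mu$; a cleaner route may be to run the argument of Substep 1.2 of Proposition \ref{s<p,q<u} (starting from \eqref{k-02}, using Lemma \ref{help} and the Peetre maximal function Proposition \ref{peetremax}) with the weights $C_\mu$ carried along, since that argument already produces the Besov quasi-norm of $\psi_\mu f$ summed over $\mu$.

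For necessity, I would argue by a counterexample when $1/v<1/q-1/p$, i.e.\ $v>r$. The idea is to exhibit $f\in B^s_{p,q,v}(\R)$ that is not a pointwise multiplier. Since $M(B^s_{p,q}(\R))=M^s_{p,q}(\R)\hookrightarrow B^s_{p,q,q}(\R)$? — no, rather I should use that $M(B^s_{p,q}(\R))\hookrightarrow M^s_{p,q}(\R)$ (Lemma \ref{nec}), and the $M^s_{p,q}$ quasi-norm dominates a localized expression that, tested against a well-chosen $f$ supported on a lattice of widely separated bumps with coefficients $\{b_j\}$, behaves like an $\ell_q$-type norm against $\ell_p$-duality, whereas the $B^s_{p,q,v}$ quasi-norm behaves like an $\ell_v$-norm. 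Concretely, following the construction in Substep 2.2 of the proof of Proposition \ref{s<p,q<u}: take $f=g_{\alpha,\mu}=\sum_j \alpha_j\tilde\psi_{1,j,\mu_j}$ with $\mu_j=(j2^j,0,\dots,0)$, so that $\|f|B^s_{p,q,v}(\R)\|\asymp \|\{2^{j(s+d(1/2-1/p))}\alpha_j\}\|_{\ell_v}$ by \eqref{ws-15a}. One then estimates $\|f|M^s_{p,q}(\R)\|$ from below: choosing $C_\mu$ to be $\delta_{\mu,\nu}$ only gives the $\unif$-norm, so instead one spreads a unit $\ell_p$-mass of $C_\mu$'s over many of the occupied cells and picks up an $\ell_{p'}$-type gain, producing a lower bound $\gtrsim \|\{2^{j(s+d(1/2-1/p))}\alpha_j\}\|_{\ell_r}$ (this is where $r$ with $1/r=1/q-1/p$ appears, via the interplay of $\ell_p$ over cells with $\ell_q$ over scales). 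If $v>r$ one can then choose $\{\alpha_j\}$ so the $\ell_v$-side is finite while the $\ell_r$-side diverges, giving $f\in B^s_{p,q,v}(\R)\setminus M(B^s_{p,q}(\R))$. I expect this necessity direction — specifically pinning down the correct lower bound for the $M^s_{p,q}$-norm of the wavelet test function, with the right exponent $r$ — to be the genuinely delicate part; the bookkeeping of which scales $j$ are "active" and how the $C_\mu$-supremum in \eqref{ws-04} interacts with the separated supports must be done carefully, and one should double-check the endpoint behavior when $1/v=1/q-1/p$ (which is included in the sufficiency range).
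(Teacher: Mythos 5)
Your sufficiency argument has two concrete problems, one of which is a real error. The H\"older step as you describe it --- splitting $\sum_\mu |C_\mu|^p a_\mu$ with exponents $r/p$ and $(r/p)'$, where $1/r=1/q-1/p$ --- is not a valid application of H\"older's inequality whenever $r<p$, i.e.\ whenever $p>2q$, since then $r/p<1$ and has no conjugate exponent; so "one checks the exponents match up" hides a failure in a full range of parameters. The workable order of operations is the reverse: first use $q/p\le 1$ to interchange the $k$- and $\mu$-sums (the $\ell_{q/p}$ quasi-triangle inequality applied to the sequences $k\mapsto |C_\mu|^p2^{ksp}\sup_h\|\Delta_h^m(\psi_\mu f)|L_p(\R)\|^p$), which reduces the second term in \eqref{ws-04} to $\big(\sum_\mu |C_\mu|^q\,\|\psi_\mu f\,|B^s_{p,q}(\R)\|^q\big)^{1/q}$, and only then apply H\"older in $\mu$ with exponents $p/q$ and $v/q$ (note $q/p+q/v=1$ exactly when $1/v=1/q-1/p$). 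Second, your reduction to $M^s_{p,q}(\R)\hookrightarrow M(B^s_{p,q}(\R))$ is not supplied by the theorems you cite for all cases of the proposition: Theorem \ref{final} excludes $p=\infty$ and Theorem \ref{final3}(ii) requires $p\le 1$, so the cases $p=\infty$ and $s=d/p$ with $p>1$ are left uncovered. The paper avoids both issues by never passing through $M^s_{p,q}(\R)$: it estimates $\|fg\,|B^s_{p,q}(\R)\|_{2m}$ directly as in Proposition \ref{wichtig}, reduces the terms with $m<u\le 2m$ via \eqref{ws-23} to $\big(\sum_\mu\|\phi_\mu g|L_\infty(\R)\|^q\|\psi_\mu f|B^s_{p,q}(\R)\|^q\big)^{1/q}$, and applies H\"older with $1/v+1/p=1/q$ there.

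The genuine gap is in the necessity direction, which you leave as a plan resting on the unproven lower bound $\|f\,|M^s_{p,q}(\R)\|\gtrsim\|\{2^{j(s+d(1/2-1/p))}\alpha_j\}_j\|_{\ell_r}$ for the separated wavelet sums; that bound is precisely the delicate point (testing against $\ell_p$-normalized indicator sequences $C_\mu$ only yields a weaker Lorentz-type quantity, so one would have to optimize over non-constant $C_\mu$), and it is also more than is needed. The paper's argument is both simpler and complete: pair $f_{M,N,\alpha}=\sum_{j=M}^N\alpha_j\tilde\psi_{1,j,\mu_j}$ with the plateau function $g_{M,N}=\sum_{j=M}^N\psi(\cdot-2^{-j}\mu_j)$, for which $f_{M,N,\alpha}\cdot g_{M,N}=f_{M,N,\alpha}$ and $\|g_{M,N}|B^s_{p,q}(\R)\|\asymp(N-M)^{1/p}$. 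Boundedness of the embedding then yields $\|\gamma\|_{\ell_q}\le c'\,\|\gamma\|_{\ell_v}\,(N-M)^{1/p}$ uniformly in $M,N,\alpha$, and the single choice $\gamma_j\equiv 1$ with $N-M\to\infty$ forces $1/q\le 1/v+1/p$. No infinite counterexample function and no lower bound for the multiplier norm beyond testing against this one explicit $g$ is required; as written, your necessity argument is not a proof.
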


\begin{proof}
{\it Step 1.} Sufficiency. Proposition \ref{s<p,q<u} yields that it is enough to  consider the case $1/v=1/q-1/p$. 
Let $s<m\leq s+1$. We shall prove that
\beqq
\|\,  fg\, |B^s_{p,q}(\R)\|_{2m} \leq C\,  \|\, g\, |B^s_{p,q}(\R)\| \, \|\,  f\, |B^s_{p,q,v}(\R)\| 
\eeqq
holds for all $g\in B^s_{p,q}(\R)$ and $f\in B^s_{p,q,v}(\R)$. 
From now on we shall follow the proof of Proposition \ref{wichtig}. Observe $B^s_{p,q,v}(\R) \hookrightarrow B^s_{p,q,\infty}(\R)=
B^s_{p,q}(\R)_\unif$.
Since the condition $p\leq q$ is not needed in  Step 1 and Substep 2.1 of the proof of Proposition \ref{wichtig}, 
it is enough to deal with the case $ m < u \le 2m$. From \eqref{ws-23}, $q< p$ and Proposition \ref{diff} we derive that
\beqq 
S_u &\leq & \Bigg\{\sum_{k=0}^{\infty}\bigg(2^{ksp} \sup_{|h| < 2^{-k}}  \sum_{\mu\in \Z}
\big\| \phi_{\mu}g \big|L_\infty(\R) \big\|^p \cdot  \big\|\Delta_h^u(\psi_{\mu}f) \big|  L_p(\R)\big\|^p \bigg)^{q/p} \Bigg\}^{1/q}
\\
&\leq & \Bigg\{\sum_{\mu\in \Z}\big\| \phi_{\mu}g \big|L_\infty(\R) \big\|^q\sum_{k=0}^{\infty} 2^{ksq} \sup_{|h| < 2^{-k}}   
\big\|\Delta_h^u(\psi_{\mu}f)\big|  L_p(\R)\big\|^q \Bigg\}^{1/q} 
\\
&\leq & \Bigg\{\sum_{\mu\in \Z}\big\| \phi_{\mu}g \big|L_\infty(\R) \big\|^q \cdot \big\| \, \psi_{\mu}f\, \big| B^s_{p,q}(\R)\big\|^q \Bigg\}^{1/q}.
\eeqq
Now H\"older's inequality with $\frac{1}{v}+\frac{1}{p}=\frac 1q$,  Lemma \ref{emb1} and Corollary \ref{klar} yield
\beqq
S_u  
&\leq & 
\Bigg\{\sum_{\mu\in \Z}\big\| \phi_{\mu}g \big|L_\infty(\R) \big\|^p\Bigg\}^{1/p} 
\Bigg\{ \sum_{\mu\in \Z} \big\| \, \psi_{\mu}f \, \big| B^s_{p,q}(\R)\big\|^v\Bigg\}^{1/v}
\\
&\lesssim &  \bigg(\sum_{\mu\in \Z}\big\| \phi_{\mu}g \big|B^s_{p,p}(\R) \big\|^p\bigg)^{1/p} \,  \big\| \, f\,\big| B^s_{p,q,v}(\R)\big\|\\
&\lesssim &  \|\, g\, |B^s_{p,q}(\R)\| \,  \| \, f\, |B^s_{p,q,v}(\R)\|.
\eeqq
\noindent
{\it Step 2.} Necessity.
We shall employ the same type of arguments as in Substep 2.2 of the proof of Proposition \ref{s<p,q<u}, see in particular \eqref{4.22} and \eqref{ws-13}.
Then we choose
\[
f_{M,N,\alpha}(x) := \sum_{j=M}^{N} \, \alpha_j\,  \tilde{\psi}_{1,j,\mu_j} (x)\, , \qquad \mu_j := (4^j, 0, \ldots  , 0) 
\]
for some sequence $\alpha:= (\alpha_j)_j$, $N$ and $M$ (to be fixed later).
Define $\gamma_j := 2^{j\big(s + d(\frac 1 2 - \frac 1 p)\big)} |\alpha_j|$, $j \ge M$.
Then, by making use of the same conventions as in \eqref{ws-15a}, we obtain
\[
\| \, f_{M,N,\alpha} \, | B^s_{p,q} (\R) \| \asymp  \bigg( \sum_{j=M}^N 2^{j\big(s + d(\frac 1 2 - \frac 1 p)\big)q} |\alpha_j|^q \bigg)^{1/q} =
\bigg( \sum_{j=M}^N  |\gamma_j|^q \bigg)^{1/q} 
\]
and
\[
\| \, f_{M,N,\alpha} \, | B^s_{p,q,v} (\R) \| \asymp  \bigg( \sum_{j=M}^N 2^{j\big(s + d(\frac 1 2 - \frac 1 p)\big)v} |\alpha_j|^v \bigg)^{1/v}
= \bigg( \sum_{j=M}^N  |\gamma_j|^v  \bigg)^{1/v} \, . 
\]
Defining 
\[
g_{M,N}(x) := \sum_{j =M}^N \,   \, \psi (x-2^{-j}\mu_j), \qquad x \in \R\, ,
\]
 we conclude
\[
\| \, g_{M,N} \, | B^s_{p,q} (\R) \| \asymp  \,  (N-M)^{1/p} \, .
\]
The lower bound is trivial (it even holds for $\| \, g_{M,N} \, | L_{p} (\R) \|$). For the proof of the  upper bound
one uses the information on the supports of the functions $\psi (\, \cdot \, -2^{-j}\mu_j)$ and Proposition \ref{diff}.
Observe, all hidden constants are independent of $M,N$ and $\alpha$. 
By construction we have the identity $f_{M,N,\alpha} \cdot g_{M,N} = f_{M,N,\alpha}$.
Hence, $B^s_{p,q,v} (\R) \subset M(B^s_{p,q}(\R))$ implies 
\\
$B^s_{p,q,v} (\R) \hookrightarrow M(B^s_{p,q}(\R))$ and therefore
\beqq
\| \, f_{M,N,\alpha} \, | B^s_{p,q}(\R) \|  & = & \| \, f_{M,N,\alpha} \, \cdot \, g_{M,N} \, | B^s_{p,q}(\R) \| 
\\
&\le & \, c \, \| \,f_{M,N,\alpha} \, | B^s_{p,q,v} (\R)\|
\, \, \| \, g_{M,N} \, | B^s_{p,q}(\R)\|
\eeqq
for some $c>0$. Consequently
\[
\bigg( \sum_{j=M}^N  \, |\gamma_j|^q \bigg)^{1/q} \le c' \, 
\bigg( \sum_{j=M}^N  \, |\gamma_j|^v \bigg)^{1/v} \, 
 \big(N-M\big)^{1/p} 
\]
holds for some $c'$ independent of $N$, $M$ and $\alpha$.
Choosing  $\gamma_j =1 $ for all $j$ the necessity of $\frac 1 v \ge \frac 1 q - \frac 1 p $ follows.
\end{proof}

\begin{remark} \rm 
 Proposition \ref{loc} in  case $1\leq q<p \leq \infty$ can be found in \cite{SS}.
\end{remark}

 Proposition \ref{loc} has an interesting consequence. From Lemma \ref{mult} we know that always
 $M(B^s_{p,q}(\R)) \hookrightarrow B^s_{p,q}(\R)_\unif$ holds.
Now we ask for coincidence.

\begin{corollary}\label{abc}
Let $1\le p,q \le \infty$  and $s\in \re$. 
Then
$M(B^s_{p,q}(\R))$ coincides with $B^s_{p,q} (\R)_\unif$ if and only if either $1\leq p\leq q\leq \infty$ and $s>d/p$ or $p=q=1$ and $s=d$. 
\end{corollary}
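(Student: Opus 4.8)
The plan is to combine the already-established identifications of $M(B^s_{p,q}(\R))$ with the embedding criterion of Proposition \ref{loc}. First I would dispose of the easy direction. If $1\le p\le q\le\infty$ and $s>d/p$, then Theorem \ref{b-main} gives $M(B^s_{p,q}(\R))=B^s_{p,q}(\R)_\unif$ directly. If $p=q=1$ and $s=d$, then $s=d/p$ with $0<p=q\le 1$, so Theorem \ref{final3}(i) gives $M(B^s_{p,p}(\R))=B^s_{p,p}(\R)_\unif$. This settles the ``if'' part.

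For the ``only if'' part I would argue by elimination, using the standing assumption $1\le p,q\le\infty$. The candidate parameter regions where coincidence could conceivably hold and is not yet covered are: (a) $1\le q<p\le\infty$ with $s>d/p$ (or the limiting $s=d/p$, $q=1$); (b) $1\le p\le q\le\infty$ (or $q<p$) with $s\le d/p$ and the pair $(p,q,s)$ not equal to $(1,1,d)$; and one should also note that for $s<d(1/p-1)_+$ — which under $p\ge 1$ is vacuous — there is nothing to check, so $s<0$ is the only truly problematic subcase of $s\le d/p$ outside what is known. The main structural input is: in case $q<p$ one has $B^s_{p,q}(\R)_\unif = B^s_{p,q,\infty}(\R)$, and Proposition \ref{loc} asserts $B^s_{p,q,\infty}(\R)\hookrightarrow M(B^s_{p,q}(\R))$ holds if and only if $1/\infty\ge 1/q-1/p$, i.e. $q\ge p$ — which fails. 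Hence in region (a) the inclusion $B^s_{p,q}(\R)_\unif\hookrightarrow M(B^s_{p,q}(\R))$ already fails (strictly), so the spaces cannot coincide; this is exactly remark (iii) after Theorem \ref{final2} specialized, and it is the cleanest part of the elimination.

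The genuinely delicate region is (b), the case $s\le d/p$ (equivalently, since $p\ge1$, the case $0<s\le d/p$ together with $s\le 0$). Here one cannot invoke Theorems \ref{b-main}--\ref{final3}, and one must instead exhibit, for each such $(p,q,s)\ne(1,1,d)$, a function $f\in B^s_{p,q}(\R)_\unif\setminus M(B^s_{p,q}(\R))$. The natural device is the one used throughout the paper: take $f$ supported near a single cube (so that membership in $B^s_{p,q}(\R)_\unif$ is the same as membership in $B^s_{p,q}(\R)$ near that cube, which is easy to arrange by a local singularity of the right strength), and then test it against $g\equiv1$ or against a suitable sum $\sum_\mu C_\mu\psi_\mu$ of translates; since $g\equiv1\in B^s_{p,q}(\R)$ when $s\le 0$ (and $f\cdot 1=f$), one needs $f\notin B^s_{p,q}(\R)$, which is impossible for a single-cube $f$ — so for $s\le 0$ one instead spreads the singularities over infinitely many cubes with $\ell_\infty$-bounded but not better behaviour, exploiting the failure of the localization principle (Proposition \ref{s<p,q<u}) precisely when $p>q$ or when the $L_\infty$-summability index is too large. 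For $0<s<d/p$ one uses the sharp Sobolev embedding $B^s_{p,q}\hookrightarrow B^{s-d/p+d/\tilde p}_{\tilde p,q}$ and a homogeneity/rescaling argument in the spirit of Triebel's Theorem 2.25 quoted in the introduction. I expect this region (b) to be the main obstacle: the if-part and region (a) are immediate consequences of results already in the excerpt, but ruling out coincidence throughout $s\le d/p$ requires constructing explicit counterexamples and carefully checking that they lie in the uniform space but not in the multiplier space, and the construction must be tuned separately to the subcases $s\le 0$, $0<s<d/p$, $s=d/p$ with $q>1$.
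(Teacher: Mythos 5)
Your treatment of the ``if'' part and of the region $q<p$, $s>d/p$ (your region (a)) matches the paper: Theorem \ref{b-main} and Theorem \ref{final3}(i) give sufficiency, and Proposition \ref{loc} with $v=\infty$ (note $B^s_{p,q}(\R)_\unif=B^s_{p,q,\infty}(\R)$, and $1/\infty\ge 1/q-1/p$ fails precisely when $q<p$) rules out coincidence there. The problem is your region (b), $s\le d/p$, which you correctly identify as the part you have not settled but then attack by the wrong route. You propose to build, case by case, explicit functions in $B^s_{p,q}(\R)_\unif\setminus M(B^s_{p,q}(\R))$; none of these constructions is carried out, and the sketch contains errors (e.g.\ $g\equiv 1$ does \emph{not} belong to $B^s_{p,q}(\R)$ for $p<\infty$ and any $s$, since $\cfi[\varphi_0\cf 1]$ is a nonzero constant and hence not in $L_p$). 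As it stands this is a genuine gap.

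The idea you are missing is much simpler and avoids all constructions. For every choice of parameters one has the elementary embedding $B^s_{p,q}(\R)\hookrightarrow B^s_{p,q}(\R)_\unif$ (multiplication by a fixed $C_0^\infty$ cut-off is a bounded operator on $B^s_{p,q}(\R)$, uniformly over translates). Hence the assumed coincidence $M(B^s_{p,q}(\R))=B^s_{p,q}(\R)_\unif$ forces $B^s_{p,q}(\R)\hookrightarrow M(B^s_{p,q}(\R))$, i.e.\ $B^s_{p,q}(\R)$ is a multiplication algebra. Theorem \ref{algebra} then says this happens only if $s>d/p$, or $p<\infty$, $s=d/p$ and $q\le 1$ --- which in the Banach range means $q=1$. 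This kills your entire region (b) in one stroke; the only surviving case not already covered by your region (a) is $s=d/p$, $q=1$, and there Proposition \ref{loc} (applied with $v=\infty$ in the limiting case $s=d/p$, $q\le 1$) excludes $1<p\le\infty$, leaving exactly $p=q=1$, $s=d$. So your overall architecture (reduce to Proposition \ref{loc} plus known identifications) is right, but you need Theorem \ref{algebra} as the second pillar of the necessity argument; with it, no counterexample construction is required.
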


\begin{proof}
{\em Step 1.} Necessity.
 From $M(B^s_{p,q}(\R))= B^s_{p,q} (\R)_\unif$ we conclude $B^s_{p,q}(\R) \hookrightarrow M(B^s_{p,q} (\R))$. Now Theorem \ref{algebra}
 yields either $s>d/p$ or $0< p<\infty$, $0 < q \le 1$ and $s=d/p$.
 Next we employ Proposition \ref{loc}. Hence, if $s>d/p$ then  $B^{s}_{p,q} (\R)_\unif \not\subset M(B^s_{p,q}(\R))$ if $0 < q < p \le \infty$.
 Now we turn to $s=d/p$. Applying Proposition \ref{loc} once again, we find 
 $B^{d/p}_{p,1} (\R)_\unif \not\subset M(B^{d/p}_{p,1}(\R))$ if $1 < p \le \infty$.
 \\
 {\em Step 2.} Sufficiency. This follows immediately from Corollary \ref{klar} and Theorem \ref{b-main}. 
\end{proof}

\noindent
{\bf Proof of Remark \ref{abcd}}.
Theorem \ref{algebra} yields  
\[
 \|f \, \cdot \, g\, |B^s_{p,q} (\Omega) \| \le c \, \|\, f \,  |B^s_{p,q} (\Omega) \|\, \| \, g\, |B^s_{p,q} (\Omega) \|
\]
whenever this inequality is true on $\R$.
Hence, $B^s_{p,q} (\Omega) \hookrightarrow M(B^s_{p,q} (\Omega))$ under the restrictions of Theorem \ref{algebra}.
On the other hand, the function $f\equiv 1$ belongs to all spaces $B^s_{p,q} (\Omega)$, since $\Omega$ is bounded.
Hence, $f \in B^s_{p,q} (\Omega)$ is also a necessary condition for $f$ to belong to $M(B^s_{p,q} (\Omega))$.
\hspace*{\fill} \rule{3mm}{3mm} 


\end{document}